\newtheorem{theorem}{Theorem}
\newtheorem{lemma}[theorem]{Lemma}
\newtheorem{corollary}[theorem]{Corollary}
\newtheorem{observation}[theorem]{Observation}
\newtheorem{definition}{Definition}
\newtheorem{conjecture}{Conjecture}
\title{Enclosing Depth and other Depth Measures}
\author{Patrick Schnider\thanks{Department of Mathematical Sciences,
        University of Copenhagen, Denmark. {\tt ps@math.ku.dk}}}
\date{}
\begin{document}

\maketitle

\begin{abstract}
We study families of depth measures defined by natural sets of axioms.
We show that any such depth measure is a constant factor approximation of Tukey depth.
We further investigate the dimensions of depth regions, showing that the \emph{Cascade conjecture}, introduced by Kalai for Tverberg depth, holds for all depth measures which satisfy our most restrictive set of axioms, which includes Tukey depth.
Along the way, we introduce and study a new depth measure called \emph{enclosing depth}, which we believe to be of independent interest, and show its relation to a constant-fraction Radon theorem on certain two-colored point sets.
\end{abstract}

\section{Introduction}

Medians are an important tool in the statistical analysis and visualization of data.
Due to the fact that medians only depend on the order of the data points, and not their exact positions, they are very robust against outliers.
However, in many applications, data sets are multidimensional, and there is no clear order of the data set.
For this reason, various generalizations of medians to higher dimensions have been introduced and studied, see e.g.~\cite{aloupis, Liu, Mosler} for surveys.
Many of these generalized medians rely on a notion of \emph{depth} of a query point within a data set, a median then being a query point with the highest depth among all possible query points.
Several such depth measures have been introduced over time, most famously Tukey depth~\cite{tukey} (also called halfspace depth), simplicial depth \cite{LiuSimplicial}, or convex hull peeling depth (see, e.g.,~\cite{aloupis}).
In particular, just like the median, all of these depth measures only depend on the relative positions of the involved points.
More formally, let $S^{\mathbb{R}^d}$ denote the family of all finite sets of points in $\mathbb{R}^d$.
A depth measure is a function $\varrho: (S^{\mathbb{R}^d},\mathbb{R}^d)\rightarrow \mathbb{R}_{\geq 0}$ which assigns to each pair $(S,q)$ consisting of a finite set of data points $S$ and a query point $q$ a value, which describes how deep the query point $q$ lies within the data set $S$.
A depth measure $\varrho$ is called \emph{combinatorial} if it depends only on the \emph{order type} of $S\cup\{q\}$.
The order type of a point set $S=\{s_1,\ldots,s_n\}$ is a mapping that assigns to each ordered $(d+1)$-tuple of points the orientation of the spanned simplex.
Another way to view this is the following: consider all hyperplanes spanned by $d$ or more points of $S$. This defines an arrangement $A$ of hyperplanes, whose \emph{cells} are the connected components of $\mathbb{R}^d\setminus A$. Then all points in a cell have the same depth.

In this paper, we consider general classes of combinatorial depth measures, defined by a small set of axioms, and prove relations between them and concrete depth measures, such as \emph{Tukey depth} ($\text{TD}$) and \emph{Tverberg depth} ($\text{TvD}$).
Let us first briefly discuss these two depth measures.

\begin{definition}
Let $S$ be a finite point set in $\mathbb{R}^d$ and let $q$ be a query point. Then the Tukey depth of $q$ with respect to $S$, denoted by $\textbf{TD}(S,q)$, is the minimum number of points of $S$ in any closed half-space containing $q$.
\end{definition}

Tukey depth, also known as \emph{halfspace depth}, was independently introduced by Joseph L.~Hodges in 1955 \cite{hodges} and by John W.~Tukey in 1975 \cite{tukey} and has received significant attention since, both from a combinatrial as well as from an algorithmic perspective, see e.g.~Chapter 58 in \cite{Handbook} and the references therein.
Notably, the \emph{centerpoint theorem} states that for any point set $S\subset\mathbb{R}^d$, there exists a point $q\in\mathbb{R}^d$ for which $\text{TD}(S,q)\geq\frac{|S|}{d+1}$ \cite{CP}.

In order to define Tverberg depth, we need a preliminary definition: given a point set $S$ in $\mathbb{R}^d$, an \emph{$r$-partition} of $S$ is a partition of $S$ into $r$ pairwise disjoint subsets $S_1,\ldots,S_r\subset S$ with $\bigcap_{i=1}^r\text{conv}(S_i)\neq\emptyset$.
We call $\bigcap_{i=1}^r\text{conv}(S_i)$ the \emph{intersection} of the $r$-partition.

\begin{definition}
Let $S$ be a finite point set in $\mathbb{R}^d$ and let $q$ be a query point. Then the Tverberg depth of $q$ with respect to $S$, denoted by $\textbf{TvD}(S,q)$, is the maximum $r$ such that there is an $r$-partition of $S$ whose intersection contains $q$.
\end{definition}

Tverberg depth is named after Helge Tverberg who proved in 1966 that any set of $(d+1)(r-1)+1$ points in $\mathbb{R}^d$ allows an $r$-partition \cite{tverberg}.
In particular, this implies that there is a point $q$ with $\text{TvD}(S,q)\geq\frac{|S|}{d+1}$.
Just as for Tukey depth, there is an extensive body of work on Tverberg's theorem, see the survey \cite{BaranyTverberg} and the references therein.

In $\mathbb{R}^1$, both Tukey and Tverberg depth give a very natural depth measure: it counts the number of points of $S$ to the left and to the right of $q$ and then returns the minimum of the two numbers.
We call this measure the \emph{standard depth} in $\mathbb{R}^1$.
In particular, for all of them there is always a point $q\in\mathbb{R}^1$ for which we have $\varrho(S,q)\geq\frac{|S|}{2}$, that is, a median.

Another depth measure that is important in this paper is called enclosing depth.
We say that a point set $S$ of size $(d+1)k$ in $\mathbb{R}^d$ \emph{$k$-encloses} a point $q$ if $S$ can be partitioned into $d+1$ pairwise disjoint subsets $S_1,\ldots,S_{d+1}$, each of size $k$, in such a way that for every transversal $p_1\in S_1,\ldots, p_{d+1}\in S_{d+1}$, the point $q$ is in the convex hull of $p_1,\ldots,p_{d+1}$.
If the sizes of $S_1,\ldots, S_{d+1}$ are not specified, we just say that $S_1,\ldots,S_{d+1}$ \emph{enclose} $q$.
Intuitively, if $S\cup\{q\}$ is in general position, the points of $S$ are centered around the vertices of a simplex with $q$ in its interior.
More formally, as we will see in Section \ref{sec:axiom2}, the convex hulls of the $S_i$ are \emph{well-separated}, meaning that for each $I\subseteq \{1,\ldots,d+1\}$ the convex hulls of $\bigcup_{i\in I}S_i$ and $\bigcup_{i\notin I}S_i$ can be separated by a hyperplane through $q$.

\begin{definition}
Let $S$ be a finite point set in $\mathbb{R}^d$ and let $q$ be a query point. Then the enclosing depth of $q$ with respect to $S$, denoted by $\textbf{ED}(S,q)$, is the maximum $k$ such that there exists a subset of $S$ which $k$-encloses $q$.
\end{definition}

It is straightforward to see that enclosing depth also gives the standard depth in $\mathbb{R}^1$.
The centerpoint theorem \cite{CP} and Tverberg's theorem \cite{tverberg} show that both for Tukey as well as Tverberg depth, there are deep points in any dimension.
The question whether a depth measure enforces deep points is a central question in the study of depth measures.
It turns out that this also holds for enclosing depth.
In fact, Fabila-Monroy and Huemer \cite{Fabila} have shown that enclosing depth can be bounded from below by a constant fraction of Tukey depth.
We give a new proof of this fact, which gives a slightly better constant.
We will further show that all depth measures considered in this paper can be bounded from below by enclosing depth.
From this we get one of the main results of this paper: all depth measures that satisfy the axioms given later are a constant factor approximation of Tukey depth.

Another area of study in depth measures are \emph{depth regions}, also called depth contours.
For some depth measure $\varrho$ and $\alpha\in\mathbb{R}$, we define the \emph{$\alpha$-region} of a point set $S\subset\mathbb{R}^d$ as the set of all points in $\mathbb{R}^d$ that have depth at least $\alpha$ with respect to $S$.
We denote the $\alpha$-region of $S$ by $D_{\varrho}^S(\alpha):=\{q\in\mathbb{R}^d\mid \varrho(S,q)\geq\alpha\}$.
Note that for $\alpha<\beta$ we have $D_{\varrho}^S(\alpha)\supset D_{\varrho}^S(\beta)$, that is, the depth regions are nested.
The structure of depth regions has been studied for several depth measures, see e.g.~\cite{Miller, Zuo}.
For example, it is well-known that Tukey depth regions are compact and convex.
Depth regions in $\mathbb{R}^2$ have been proposed as a tool for data visualization \cite{tukey}.
From a combinatorial point of view, Gil Kalai introduced the following conjecture \cite{Kalai_cascade}.

\begin{conjecture}[Cascade Conjecture]
Let $S$ be a point set of size $n$ in $\mathbb{R}^d$.
For each $i\in\{1,\ldots,n\}$, denote by $t_i$ the dimension of $D_{\text{TvD}}^S(i)$, where we set $\dim(\emptyset)=-1$.
Then
$$\sum_{i=1}^n t_i\geq 0.$$
\end{conjecture}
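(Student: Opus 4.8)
The statement to be proved is exactly Kalai's Cascade Conjecture for Tverberg depth, and the plan is to reduce it to the Cascade inequality for a better-behaved depth measure — enclosing depth — which in turn will follow from the paper's general Cascade theorem for measures satisfying the most restrictive set of axioms. The first and easiest step is the pointwise comparison
$$\text{ED}(S,q)\ \le\ \text{TvD}(S,q)\qquad\text{for every finite }S\subset\mathbb{R}^d\text{ and every }q.$$
Indeed, if a subset $T\subseteq S$ with $|T|=(d+1)k$ $k$-encloses $q$ via colour classes $T_1,\dots,T_{d+1}$, write $T_i=\{t_i^1,\dots,t_i^k\}$ and put $\tau^j:=\{t_1^j,t_2^j,\dots,t_{d+1}^j\}$ for $j=1,\dots,k$. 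The $\tau^j$ partition $T$ into $k$ pairwise disjoint transversals, so $q\in\text{conv}(\tau^j)$ for every $j$; hence $\{\tau^1,\dots,\tau^k\}$ is a $k$-partition of $T$ whose intersection contains $q$, and assigning the leftover points $S\setminus T$ to one part preserves this. Thus $\text{TvD}(S,q)\ge k$, and optimizing over $T$ gives the claim. (No colourful Tverberg theorem is needed: a perfect rainbow matching of equal-size classes is immediate.)

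From $\text{ED}\le\text{TvD}$ we get $D_{\text{ED}}^S(i)\subseteq D_{\text{TvD}}^S(i)$, hence $\dim D_{\text{ED}}^S(i)\le\dim D_{\text{TvD}}^S(i)=t_i$ for every $i$, with the convention $\dim(\emptyset)=-1$. Summing over $i$ gives $\sum_{i=1}^n t_i\ \ge\ \sum_{i=1}^n\dim D_{\text{ED}}^S(i)$, so it suffices to prove the Cascade inequality for enclosing depth itself, and for that the plan is to invoke the paper's Cascade theorem for measures satisfying the most restrictive axioms. This shifts the burden onto verifying those axioms for $\text{ED}$: combinatoriality is immediate from the definition; monotonicity under adding points holds because a subset that $k$-encloses $q$ inside $S$ still $k$-encloses $q$ inside any superset; and agreement with the standard depth on $\mathbb{R}^1$ is already noted in the excerpt. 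What remains is the axiom constraining the shape of the depth regions.

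This last axiom is where the real difficulty sits, and it is also why one cannot argue about Tverberg depth directly: Tverberg depth regions are not known to be convex — essentially the content of the conjecture — whereas enclosing configurations carry the well-separation property recorded in Section~\ref{sec:axiom2}. The plan is to exploit well-separation to interpolate between witnessing configurations: given that $q$ and $q'$ are each $k$-enclosed, with separating hyperplanes through $q$ and through $q'$ respectively, one builds from the two colour-class structures a configuration witnessing $k$-enclosure of points on the segment $[q,q']$ (or of enough of them to certify whatever regularity the Cascade theorem demands of the regions). I expect this verification to be the main obstacle, both technically and because it is conceivable that some structural axiom genuinely fails for $\text{ED}$; the fallback is then to replace $\text{ED}$ by the largest depth measure dominated by $\text{TvD}$ that does satisfy all the axioms (for instance a convexification of the enclosing-depth regions) and rerun the termwise comparison, the difficulty migrating to showing that this replacement is not too small. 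In either case, once a lawful lower-bounding measure $\varrho^{*}\le\text{TvD}$ is in hand, the Cascade theorem yields $\sum_{i=1}^n\dim D_{\varrho^{*}}^S(i)\ge 0$, and termwise domination upgrades this to $\sum_{i=1}^n t_i\ge 0$, as required.
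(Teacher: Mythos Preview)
The statement you are trying to prove is not a theorem of the paper; it is Kalai's Cascade Conjecture, which the paper explicitly leaves open. The paper's Cascade theorem applies only to super-additive depth measures whose depth regions are compact and convex, and the paper emphasises that Tverberg depth falls outside this class because its depth regions are in general not even connected.

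Your reduction to enclosing depth has a concrete gap at exactly the point you yourself flag as ``the real difficulty'': enclosing depth does \emph{not} satisfy the paper's most restrictive axioms. In particular, the paper proves (Observation in Section~\ref{sec:axiom1}, Figure~\ref{fig:encl_additivity}) that $\text{ED}$ violates super-additivity, so the Cascade theorem cannot be invoked for it. Your checklist (combinatoriality, monotonicity, one-dimensional behaviour, convexity of regions) omits condition~(iv), and that is the one that fails. The fallback plan --- pass to the largest measure $\varrho^{*}\le\text{TvD}$ that does satisfy all the axioms --- is also blocked: Observation~\ref{obs:upper_bound} and the subsequent discussion show that every super-additive measure is bounded \emph{below} by $\text{TvD}$, so any super-additive $\varrho^{*}$ with $\varrho^{*}\le\text{TvD}$ would have to equal $\text{TvD}$, whose regions are not convex. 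There is thus no nontrivial lawful $\varrho^{*}$ to compare against, and the termwise domination argument has nothing to feed on. In short, proving the Cascade inequality for any depth measure below $\text{TvD}$ is at least as hard as the original conjecture, and your proposal does not circumvent this.
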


Here, the dimension of a subset $X$ of $\mathbb{R}^d$ is the maximum dimension of any neighborhood of a point in $X$.
The conjecture is known to be true when $S$ is in so-called \emph{strongly} general position \cite{Reay}, for general position in some dimensions \cite{Roudneff1, Roudneff2, Roudneff3} (see also \cite{BaranyTverberg} for more information), and without any assumption of general position for $d\leq 2$ in an unpublished M. Sc thesis in Hebrew by Akiva Kadari (see \cite{KalaiBirthday}).

While Kalai's conjecture is specifically about Tverberg depth, the sum of dimensions of depth regions can be computed for any depth measure, and thus the conjecture can be generalized to other depth measures.
In fact, in a talk Kalai conjectured that the Cascade conjecture is true for Tukey depth, mentioning on his slides that `this should be doable' \cite{KalaiTalk}.
In this work, we will prove the conjecture to be true for a family of depth measures that includes Tukey depth.

\subsection*{Structure of the paper}
We start the technical part by introducing a first set of axioms in Section \ref{sec:axiom1}, defining what we call \emph{super-additive} depth measures.
We show that these depth measures lie between Tukey depth and Tverberg depth.
In Section \ref{sec:cascade} we prove the cascade conjecture for super-additive depth measures whose depth regions are compact and convex.
We then give a second set of axioms in Section \ref{sec:axiom2}, defining \emph{central} depth measures, and show how to bound them from below by enclosing depth.
Finally, in Section \ref{sec:enclosing}, we give a new proof of a lower bound for enclosing depth in terms of Tukey depth.
In our proof, we notice a close relationship of enclosing depth with a version of Radon's theorem on certain two-colored point sets.

\section{A first set of axioms}
\label{sec:axiom1}

The first set of depth measures that we consider are \emph{super-additive} depth measures\footnote{We name both our families of depth measures after one of the conditions they satisfy. The reason for this is that the condition they are named after is the condition which separates this family from the other one.}.
A combinatorial depth measure $\varrho: (S^{\mathbb{R}^d},\mathbb{R}^d)\rightarrow \mathbb{R}_{\geq 0}$ is called super-additive if it satisfies the following conditions:
\begin{enumerate}
\item[(i)] for all $S\in S^{\mathbb{R}^d}$ and $q,p\in\mathbb{R}^d$ we have $|\varrho(S,q)-\varrho(S\cup\{p\},q)|\leq 1$ (sensitivity),
\item[(ii)] for all $S\in S^{\mathbb{R}^d}$ and $q\in\mathbb{R}^d$ we have $\varrho(S,q)=0$ for $q\not\in\text{conv}(S)$ (locality),
\item[(iii)] for all $S\in S^{\mathbb{R}^d}$ and $q\in\mathbb{R}^d$ we have $\varrho(S,q)\geq 1$ for $q\in\text{conv}(S)$ (non-triviality),
\item[(iv)] for any disjoint subsets $S_1,S_2\subseteq S$ and $q\in\mathbb{R}^d$ we have $\varrho(S,q)\geq\varrho(S_1,q)+\varrho(S_2,q)$ (super-additivity).
\end{enumerate}

It is not hard to show that a one-dimensional depth measure which satisfies these conditions has to be the standard depth measure (in fact, the arguments are generalized to higher dimensions in the following two observations) and that no three conditions suffice for this.
Further, it can be shown that both Tukey depth and Tverberg depth are super-additive.

We first note that the first two axioms suffice to give an upper bound:

\begin{observation}
\label{obs:upper_bound}
For every depth measure $\varrho$ satisfying (i) sensitivity and (ii) locality and for all $S\in S^{\mathbb{R}^d}$ and $q\in\mathbb{R}^d$ we have $\varrho(S,q)\leq \text{TD}(S,q)$.
\end{observation}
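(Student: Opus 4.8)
The plan is to argue directly from the definitions, using a halfspace that is ``bad'' for $q$ as a certificate and then removing points from $S$ one at a time to invoke the sensitivity axiom. First I would take a closed halfspace $H$ containing $q$ that achieves the Tukey depth, i.e. with $|S\cap H| = \text{TD}(S,q) =: t$. The idea is that the points of $S$ lying strictly outside $H$ are ``irrelevant'' for witnessing that $q$ is not deep: once we delete them, $q$ should end up on the boundary of the convex hull of what remains, so that locality forces the depth to be small.

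Concretely, let $S' := S \cap H$, so $|S'| = t$, and let $p_1,\dots,p_m$ be the points of $S \setminus S' = S \setminus H$ (those strictly outside $H$), where $m = |S| - t$. Since $q \in H$ and all of $S'$ lies in $H$, the query point $q$ lies in the closed halfspace $H$ together with $S'$; in particular $q$ is \emph{not} in the interior of $\text{conv}(S')$ relative to directions pointing out of $H$ — more precisely, the bounding hyperplane of $H$ separates $q$ from the interior of the ``missing'' mass, so $q \notin \text{conv}(S')$ unless $q$ lies on that hyperplane, and in either case one checks $q \notin \text{int}(\text{conv}(S'))$. The cleanest way to phrase it: the supporting hyperplane $\partial H$ has all of $S'$ on one side and $q$ on it or on that side, so $q \notin \text{conv}(S' )$ when $q \notin \partial H$, and when $q \in \partial H$ a tiny perturbation of $H$ (or a limiting argument) still shows $\varrho(S',q)$ is controlled. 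By locality (ii), $\varrho(S',q) = 0$ in the generic case; to handle the boundary case uniformly it is cleanest to instead pick $H$ with $q$ in its interior if $\text{TD}(S,q)$ is attained there, or simply note locality gives $\varrho(S',q)\le \varrho(S'',q)$ for a slightly enlarged halfspace and pass to the limit. Either way we get $\varrho(S',q) \le 0$, hence $\varrho(S',q) = 0$ since $\varrho$ is nonnegative.

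Now I would add the points $p_1,\dots,p_m$ back one at a time, applying sensitivity (i) at each step:
\[
\varrho(S' \cup \{p_1,\dots,p_j\},\, q) \le \varrho(S' \cup \{p_1,\dots,p_{j-1}\},\, q) + 1
\]
for each $j$, so that after $m$ steps $\varrho(S,q) = \varrho(S' \cup \{p_1,\dots,p_m\}, q) \le \varrho(S',q) + m = m = |S| - t$. Hmm — that gives the wrong bound; the correct direction is to build \emph{up} from the other side. So instead I would start from $S' = S \cap H$ itself and observe that it is $S$ with the points \emph{inside} the complementary open halfspace retained; the right move is: delete the points of $S$ \emph{inside} the open complement of $H$ one at a time from $S$, and at each deletion the depth drops by at most $1$, so $\text{TD}(S,q)$-many deletions leave depth at least $\varrho(S,q) - (|S|-t)$...

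Let me restate the clean version: let $H$ be the closed halfspace with $|S \cap H| = t = \text{TD}(S,q)$ and $q \in H$. Set $S_{\text{out}} := S \setminus H$, with $|S_{\text{out}}| = |S| - t$. That is not what we want either. The correct certificate is the \emph{other} halfspace: there is no gain there. The actual argument: by (ii), for the set $S \cap H$ we cannot directly conclude $q$ is outside its hull. The standard trick is the \emph{complementary} halfspace $\overline{H}$ (the closure of the complement), which contains at least $|S| - t$... this is getting circular.

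The genuinely correct line — which I would commit to in the writeup — is: iteratively remove from $S$ the points \emph{not} in $S \cap H$. There are $|S| - t$ of them. By (i), each removal changes $\varrho$ by at most $1$, so $\varrho(S \cap H, q) \ge \varrho(S,q) - (|S|-t)$. That is still the wrong inequality, so clearly one must remove the points \emph{in} a halfspace, not outside it. Final answer: take the closed halfspace $H$ with $q \in H$ and $|S\cap H| = t$; let $T := S \setminus H$ so $q \notin \text{conv}(S \setminus T)$... no: $S\setminus T = S \cap H \ni q$.

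The resolution is to use the \textbf{open} complement. Pick $H$ closed, $q \in H$, $|S \cap H| = t$. Let $H^{-}$ be the open complement; then $S \cap H^{-}$ has size $|S| - t$ (assuming no points on the boundary, else perturb $H$ inward slightly so that $q$ is interior and the boundary is point-free, increasing $|S\cap H^-|$ only). Now $q \notin \text{conv}(S \cap H^-)$ since $S \cap H^- \subseteq H^-$ and $q \notin H^-$. Wait — we want to remove $S \cap H^-$ and keep $S \cap H$, then depth of kept set is $\ge \varrho(S,q) - |S \cap H^-|$, still wrong direction. So instead: we remove $S \cap H$ (size $t$) and keep $S \cap H^-$; locality gives $\varrho(S \cap H^-, q) = 0$; sensitivity run backwards gives $0 = \varrho(S\cap H^-,q) \ge \varrho(S,q) - t$, i.e. $\varrho(S,q) \le t = \text{TD}(S,q)$.

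So the key steps, cleanly: (1) choose a closed halfspace $H \ni q$ with $|S \cap H| = \text{TD}(S,q)$, perturbing $H$ slightly inward if necessary so that no point of $S$ lies on $\partial H$ (this only removes points from $S\cap H$, keeping $|S\cap H|\le \text{TD}(S,q)$, which is all we need) and $q\in\text{int}(H)$ if $\text{TD}$ is witnessed with $q$ interior — actually even $q\in\partial H$ is fine since then $S\cap H^-$ still omits $q$; (2) apply (ii) locality to $S \cap H^-$, which lies in the open halfspace $H^-$ not containing $q$, to get $\varrho(S \cap H^-, q) = 0$; (3) obtain $S$ from $S \cap H^-$ by adding back the at most $\text{TD}(S,q)$ points of $S \cap H$ one at a time, applying (i) sensitivity at each step to conclude $\varrho(S,q) \le \varrho(S\cap H^-,q) + |S \cap H| \le \text{TD}(S,q)$. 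The main obstacle is purely the degenerate case of points of $S$ on $\partial H$, handled by an infinitesimal perturbation of the halfspace (or a direct limiting argument); everything else is a routine unwinding of the two axioms.
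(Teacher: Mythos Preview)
Your final argument is correct and is exactly the paper's proof: remove the $t=\text{TD}(S,q)$ points of $S\cap H$, observe by locality that $\varrho(S\setminus(S\cap H),q)=0$ since the remaining points lie in the open halfspace not containing $q$, and then use sensitivity $t$ times to get $\varrho(S,q)\le t$. Your worry about degenerate cases is unnecessary: the convex hull of finitely many points in an \emph{open} halfspace stays in that open halfspace, so $q\notin\text{conv}(S\cap H^{-})$ regardless of whether $q\in\partial H$ or whether points of $S$ lie on $\partial H$; no perturbation or limiting argument is needed, and the paper accordingly omits any such discussion.
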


\begin{proof}
By the definition of Tukey depth, $\text{TD}(S,q)=k$ implies that we can remove a subset $S'$ of $k$ points from $S$ so that $q$ is not in the convex hull of $S\setminus S'$.
In particular, $\varrho(S\setminus S',q)=0$ by locality.
By sensitivity we further have $\varrho(S\setminus S',q)\geq\varrho(S,q)-k$, which implies the claim.
\end{proof}

Further, the last two axioms can be used to give a lower bound:

\begin{observation}
For every depth measure $\varrho$ satisfying (iii) non-triviality and (iv) super-additivity and for all $S\in S^{\mathbb{R}^d}$ and $q\in\mathbb{R}^d$ we have $\varrho(S,q)\geq \text{TvD}(S,q)$.
\end{observation}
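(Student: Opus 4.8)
The plan is to unpack the definition of Tverberg depth and feed the resulting partition directly into the super-additivity axiom. Set $r := \text{TvD}(S,q)$. If $r=0$ there is nothing to prove, since $\varrho$ takes values in $\mathbb{R}_{\geq 0}$ (this covers in particular the case $q\notin\text{conv}(S)$). Otherwise, by definition of Tverberg depth there is an $r$-partition $S_1,\ldots,S_r$ of $S$ into pairwise disjoint subsets with $q\in\bigcap_{i=1}^r\text{conv}(S_i)$; in particular $q\in\text{conv}(S_i)$ for every $i\in\{1,\ldots,r\}$.

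The next step is to apply non-triviality (iii) to each part individually: from $q\in\text{conv}(S_i)$ we get $\varrho(S_i,q)\geq 1$ for every $i$. It then remains to combine these $r$ local lower bounds into a single bound on $\varrho(S,q)$. Axiom (iv) is stated only for a pair of disjoint subsets, so the one technical point is to promote it to an $r$-fold statement: whenever $S_1,\ldots,S_r\subseteq S$ are pairwise disjoint, $\varrho(S,q)\geq\sum_{i=1}^r\varrho(S_i,q)$. This follows by induction on $r$. Applying (iv) to the disjoint pair $S_1$ and $S_2\cup\cdots\cup S_r$ (both subsets of $S$) gives $\varrho(S,q)\geq\varrho(S_1,q)+\varrho(S_2\cup\cdots\cup S_r,q)$, and the inductive hypothesis applied inside $S_2\cup\cdots\cup S_r$ finishes the step; the base case $r=1$ is (iv) with the second set empty, together with $\varrho(\emptyset,q)\geq 0$. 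Plugging in $\varrho(S_i,q)\geq 1$ then yields $\varrho(S,q)\geq r=\text{TvD}(S,q)$.

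I do not expect a genuine obstacle here: the argument is a mirror image of the proof of Observation~\ref{obs:upper_bound}, with non-triviality and super-additivity playing the roles that locality and sensitivity played there. The only things to be mildly careful about are the harmless lifting of the two-set super-additivity axiom to an arbitrary number of parts, and the degenerate case $q\notin\text{conv}(S)$, where the claimed inequality holds trivially because the right-hand side is $0$.
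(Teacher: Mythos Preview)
Your proposal is correct and follows essentially the same approach as the paper: take a witnessing $r$-partition, apply non-triviality to each part to get $\varrho(S_i,q)\geq 1$, and then use super-additivity (extended inductively from two parts to $r$ parts) to conclude $\varrho(S,q)\geq r$. The paper is slightly terser about the induction and the trivial case $r=0$, but the argument is the same.
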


\begin{proof}
Let $\text{TvD}(S,q)=k$ and consider a $k$-partition $S_1,\ldots,S_k$ with $q$ in its intersection.
By non-triviality we have $\varrho(S_i,q)\geq 1$ for each $S_i$.
Using super-additivity and induction we conclude that $\varrho(\bigcup_{i=1}^k S_i,q)\geq\sum_{i=1}^k\varrho(S_i,k)\geq k$.
\end{proof}

Finally, it is not too hard to show that $\text{TvD}(S,q)\geq\frac{1}{d}\text{TD}(S,q)$: consider a simplex $\Delta$ spanned by points in $S$ which contains $q$. Such a simplex exists by Carath\'{e}odory's theorem. As $\Delta$ has at most $d+1$ vertices and $q$ lies in its relative interior, any halfspace with $q$ on its boundary contains at most $d$ of the vertices of $\Delta$. Thus, removing $\Delta$ the Tukey depth decreases by at most $d$. While the Tukey depth is not 0, we can always find another simplex containing $q$, and removing it again decreases the Tukey depth by at most $d$, thus there are indeed at least $\frac{1}{d}\text{TD}(S,q)$ vertex-disjoint simplices containing $q$.
This argument appears in several papers about algorithmic aspects of Tverberg points, see e.g.\ \cite{Sariel} or \cite{PabloAlgo}, Lemma 2.2.
Combining these observations, we thus get the following.

\begin{corollary}
\label{cor:additive}
Let $\varrho$ be a super-additive depth measure. Then for every point set $S$ and query point $q$ in $\mathbb{R}^d$ we have
$$\text{TD}(S,q)\geq \varrho(S,q)\geq \text{TvD}(S,q) \geq \frac{1}{d}\text{TD}(S,q).$$
\end{corollary}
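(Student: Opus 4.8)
The plan is to simply assemble the three ingredients that were already established in the preceding observations and remark, since Corollary~\ref{cor:additive} is nothing more than their conjunction. First I would invoke Observation~\ref{obs:upper_bound}: a super-additive depth measure satisfies axioms (i) and (ii), hence $\varrho(S,q)\leq\text{TD}(S,q)$. Next I would invoke the second observation: a super-additive depth measure satisfies axioms (iii) and (iv), hence $\varrho(S,q)\geq\text{TvD}(S,q)$. Finally I would recall the Carath\'{e}odory-based argument presented in the paragraph immediately before the corollary, which yields $\text{TvD}(S,q)\geq\frac{1}{d}\text{TD}(S,q)$. Chaining these three inequalities gives exactly the displayed chain
$$\text{TD}(S,q)\geq\varrho(S,q)\geq\text{TvD}(S,q)\geq\frac{1}{d}\text{TD}(S,q),$$
which is the assertion.

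Since nothing new needs to be proved, there is essentially no obstacle; the only thing worth being careful about is confirming that a super-additive depth measure does satisfy all four axioms, so that both observations genuinely apply — but this is immediate from the definition, as super-additivity is defined precisely by conditions (i)–(iv). One might also want to spell out the Tverberg-versus-Tukey step in slightly more detail than the surrounding prose does (namely: repeatedly extract a vertex-disjoint simplex containing $q$ via Carath\'{e}odory, each removal dropping $\text{TD}$ by at most $d$ since a halfspace through $q$ misses at least one vertex of a simplex whose relative interior contains $q$), but since that argument is already given in the excerpt I would just cite it. Hence the proof is a two-line deduction.

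\begin{proof}
Since $\varrho$ is super-additive, it satisfies conditions (i)--(iv). In particular it satisfies (i) and (ii), so $\varrho(S,q)\leq\text{TD}(S,q)$ by Observation~\ref{obs:upper_bound}. It also satisfies (iii) and (iv), so $\varrho(S,q)\geq\text{TvD}(S,q)$ by the second observation above. Finally, by the Carath\'{e}odory argument given before the statement, $\text{TvD}(S,q)\geq\frac{1}{d}\text{TD}(S,q)$. Combining these three inequalities yields
$$\text{TD}(S,q)\geq\varrho(S,q)\geq\text{TvD}(S,q)\geq\frac{1}{d}\text{TD}(S,q),$$
as claimed.
\end{proof}
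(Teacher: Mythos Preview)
Your proposal is correct and matches the paper's approach exactly: the paper does not even give a separate proof environment for this corollary, simply writing ``Combining these observations, we thus get the following'' after presenting Observation~\ref{obs:upper_bound}, the second observation, and the Carath\'{e}odory argument. Your write-up is a faithful unpacking of that sentence.
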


Let us note here that it could be that the factor $\frac{1}{d}$ in the last inequality could be improved.
Indeed, in the plane, we have that $\text{TvD}=\min\{\text{TD}, \lceil\frac{|S|}{3}\rceil\}$ \cite{Reay}.
This fails already in dimension 3 \cite{Avis}.
It would be interesting to see how much the factor $\frac{1}{d}$ can be improved.

From Corollary \ref{cor:additive} it follows that for any super-additive depth measure and any point set there is always a point of depth at least $\frac{|S|}{d+1}$, for example any Tverberg point.
On the other hand, there are depth measures which give the standard depth in $\mathbb{R}^1$ which are not super-additive, for example convex hull peeling depth or enclosing depth.

\begin{observation}
Enclosing depth satisfies conditions (i)-(iii), but not the super-additivity condition (iv)
\end{observation}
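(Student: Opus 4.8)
For the positive part, each condition follows quickly from the definition of $k$-enclosing. For \textbf{locality (ii)}: if $q\not\in\operatorname{conv}(S)$ then no subset of $S$ can even $1$-enclose $q$, since $1$-enclosing requires $q$ to lie in the convex hull of some transversal, hence in $\operatorname{conv}(S)$; thus $\text{ED}(S,q)=0$. For \textbf{non-triviality (iii)}: if $q\in\operatorname{conv}(S)$, then by Carath\'eodory's theorem there are $d+1$ points $s_1,\dots,s_{d+1}\in S$ (not necessarily distinct in degenerate cases, but one can take affinely independent ones spanning a simplex containing $q$, or handle the low-dimensional face directly) with $q\in\operatorname{conv}(s_1,\dots,s_{d+1})$; setting $S_i=\{s_i\}$ gives a $1$-enclosing subset, so $\text{ED}(S,q)\geq 1$. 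For \textbf{sensitivity (i)}: adding a point $p$ can only enlarge the available subsets, so $\text{ED}(S\cup\{p\},q)\geq\text{ED}(S,q)$, giving one direction. For the other direction, suppose $S\cup\{p\}$ has a subset $T$ that $k$-encloses $q$ with partition $T_1,\dots,T_{d+1}$; if $p\notin T$ we are done, and if $p\in T$, say $p\in T_j$, then removing $p$ leaves $T_j$ of size $k-1$ while the other parts keep size $k$, and the transversal condition still holds for all transversals avoiding $p$, so one readily checks this witnesses $\text{ED}(S,q)\geq k-1$ (shrinking the other parts to size $k-1$ as well if one insists on equal sizes). Hence $|\text{ED}(S,q)-\text{ED}(S\cup\{p\},q)|\leq 1$.

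\textbf{The main content is the counterexample to super-additivity (iv).} I would work in $\mathbb{R}^2$ and find disjoint $S_1,S_2\subseteq S$ and a point $q$ with $\text{ED}(S,q)<\text{ED}(S_1,q)+\text{ED}(S_2,q)$. The natural idea: take $S_1$ to be a small set that $1$-encloses $q$ (three points forming a triangle around $q$) and $S_2$ another such set, arranged so that $S_1\cup S_2$ does \emph{not} $2$-enclose $q$. The key obstruction is that $k$-enclosing is a very rigid condition: a $2$-enclosing subset needs a partition into three parts of size exactly $2$ with \emph{every} one of the $2^3=8$ transversals capturing $q$. Even though $S_1\cup S_2$ contains six points and two ``nested'' enclosing triangles, there is no reason the six points split into three pairs with the universal-transversal property; generically they do not. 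So I would place $S_1=\{a_1,b_1,c_1\}$ and $S_2=\{a_2,b_2,c_2\}$ as two triangles each containing $q$, chosen in ``sufficiently general position'' (e.g., by an explicit coordinate choice, or by a rotation argument) so that no pairing of the six points into three groups of two yields a $2$-enclosing of $q$ --- one checks that each candidate partition admits some transversal whose triangle misses $q$. Then $\text{ED}(S_1\cup S_2,q)=1$ while $\text{ED}(S_1,q)+\text{ED}(S_2,q)\geq 1+1=2$, violating (iv).

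\textbf{The step I expect to be the main obstacle} is pinning down the counterexample cleanly: one must rule out \emph{all} ways of partitioning the combined point set into three equal parts with the universal-transversal property, not just the ``obvious'' pairing, and also confirm $q$ still lies in $\operatorname{conv}(S_1\cup S_2)$ so the failure is genuinely about enclosing depth rather than locality. A robust way to do this is to give explicit coordinates (say $q$ at the origin, $S_1$ the vertices of an equilateral triangle, $S_2$ the same triangle rotated by a small generic angle and slightly scaled) and then argue that for any pair of points from the six, the line through them, together with the requirement that the remaining two pairs' transversals all contain $q$, forces a contradiction --- this is a finite case check over the $\binom{6}{2}\binom{4}{2}/3! = 15$ partitions, each of which is easily dismissed. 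Alternatively, a cleaner high-level argument: if $S_1\cup S_2$ $2$-enclosed $q$ with parts $P_1,P_2,P_3$, then by the well-separatedness remark from Section~\ref{sec:axiom2} each pair $P_i$ would have to lie on one side of some line through $q$ with the $P_i$'s occupying ``cyclically separated'' sectors, and a generic rotation of $S_2$ relative to $S_1$ destroys any such sector structure; I would use whichever of these is shortest to write.
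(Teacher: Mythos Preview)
Your proposal is correct and follows essentially the same approach as the paper. The paper dispatches (i)--(iii) with the single sentence ``it follows straight from the definition,'' while for (iv) it gives exactly your construction---two triangles in the plane, each $1$-enclosing $q$, whose union still has enclosing depth $1$---but does so by pointing to a figure rather than by the explicit case analysis or sector argument you outline; your more detailed verification is a reasonable fleshing-out of what the paper leaves implicit.
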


\begin{proof}
It follows straight from the definition that enclosing depth satisfies the conditions (i)-(iii).
To see that the super-additivity condition is not satisfied, consider the example in Figure \ref{fig:encl_additivity}.
The point $q$ has enclosing depth 1 with respect to both the set of blue points and the set of red points.
However, it can be seen that the enclosing depth of $q$ with respect to both the red and the blue points is still 1.
\end{proof}

\begin{figure}
\centering
\includegraphics[scale=1]{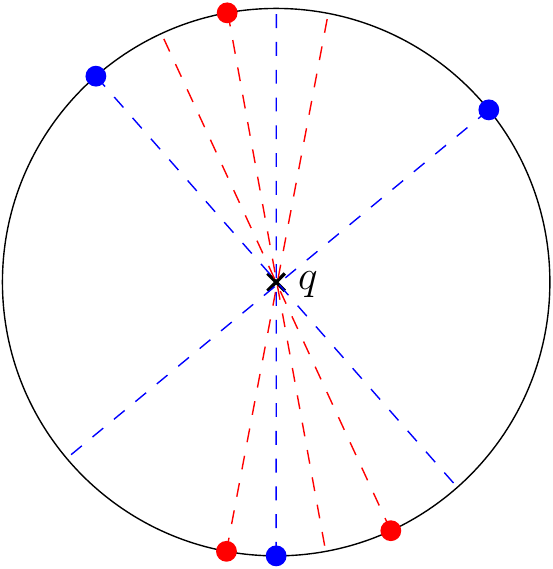}
\caption{Enclosing depth does not satisfy the super-additivity condition: the point $q$ has enclosing depth 1 with respect to both the blue and the red points, but its enclosing depth with respect to the union of the two sets is still 1.}
\label{fig:encl_additivity}
\end{figure}

\section{The Cascade Conjecture}
\label{sec:cascade}

In this section we prove the cascade conjecture for super-additive depth measures whose depth regions are compact and convex.
In fact, we will prove the cascade conjecture for the case of \emph{weighted point sets}.
This is not only to achieve greater generality, our proof of Lemma \ref{lem:cascade_partition} relies on the existence of weights, and does not go through without them.

A weighted point set is a finite point set $S$ together with a weight function $w:S\rightarrow\mathbb{R}_{\geq 0}$ which assigns a weight $w(p)$ to each $p\in S$.
We say that a weighted point set $S'$ is a strict subset of $S$, denoted by $S'\subset S$, if the underlying point set of $S'$ is a strict subset of the underlying point set of $S$, and $w'(p)\leq w(p)$ for every $p\in S'$, where $w'$ is the weight function on $S'$.
In particular, if $S'\subset S$, there is a point which is in $S$ but not in $S'$.
For two weighted point sets $A$ and $B$ with weight functions $w_A$ and $w_B$, respectively, the weight function on their union $A\cup B$ is defined as the sum of the respective weight functions.
That is, we have $w(p)=w_A(p)$ for $p\in A\setminus B$, $w(p)=w_B(p)$ for $p\in B\setminus A$ and $w(p)=w_A(p)+w_B(p)$ for $p\in A\cap B$.
Further, for a set $S$ of points we define the weight of $S$ as $w(S):=\sum_{p\in S}w(p)$.
Similarly, by a partition of a weighted point set $S$ into parts $A$ and $B$ we mean two weight functions $w_A$ and $w_B$, such that $w(p)=w_A(p)+w_B(p)$ for $p\in S$, and by a partition into strict subsets $A$ and $B$, we mean that both weighted point sets $A$ and $B$ must be strict subsets of $S$, that is, there are points $p_A,p_B$ in $S$ for which $w_A(p_A)=0$ and $w_B(p_B)=0$.
The axioms for super-additive depth measures extend to weighted point sets in the following way:

\begin{enumerate}
\item[(i)] for all $S\in S^{\mathbb{R}^d}$ and $q,p\in\mathbb{R}^d$ we have $|\varrho(S,q)-\varrho(S\cup\{p\},q)|\leq w(p)$ (sensitivity),
\item[(ii)] for all $S\in S^{\mathbb{R}^d}$ and $q\in\mathbb{R}^d$ we have $\varrho(S,q)=0$ for $q\not\in\text{conv}(S)$ (locality),
\item[(iii)] for all $S\in S^{\mathbb{R}^d}$ and $q\in\mathbb{R}^d$ we have $\varrho(S,q)\geq \min\{w(p):p\in S\}$ for $q\in\text{conv}(S)$ (non-triviality),
\item[(iv)] for any disjoint subsets $S_1,S_2\subseteq S$ and $q\in\mathbb{R}^d$ we have $\varrho(S,q)\geq\varrho(S_1,q)+\varrho(S_2,q)$ (super-additivity).
\end{enumerate}

Clearly, each point set can be considered as a weighted point set by assigning weight 1 to each point.
On the other hand, by placing several points at the same location, normalizing and using the fact that $\mathbb{Q}$ is dense in $\mathbb{R}$, each depth measure defined on point sets can be extended to weighted point sets.
Further, we can again define depth regions $D_{\varrho}^S(\alpha):=\{q\in\mathbb{R}^d\mid \varrho(S,q)\geq\alpha\}$.
We denote by $t_\alpha$ the dimension of $D_{\varrho}^S(\alpha)$.
We will also use a special depth region, called the \emph{median region}, denoted by $M_{\varrho}(S)$, which is the deepest non-empty depth region.
More formally, let $\alpha_0$ be the supremum value for which $D_{\varrho}^S(\alpha_0)\neq\emptyset$.
Then $M_{\varrho}(S):=D_{\varrho}^S(\alpha_0)$.

We now introduce a continuous version of the cascade condition:
\begin{definition}
Let $\varrho$ be a depth measure.
If for each weighted point set $S$ we have that
$$\int_0^{w(S)}t_\alpha \mathrm{d}\alpha\geq 0,$$
we say that $\varrho$ is \emph{cascading}.
\end{definition}

We say that a depth measure $\varrho$ is \emph{integral} if for any unweighted point set and any query point $q$ the depth $\varrho(S,q)$ is an integer.
Note that all the depth measures introduced so far are integral.
\begin{observation}
Let $\varrho$ be an integral depth measure and let $S$ be an unweighted point set of size $n$.
Then
$$\int_0^{w(S)}t_\alpha \mathrm{d}\alpha= \sum_{i=1}^n t_i$$
\end{observation}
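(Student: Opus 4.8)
The plan is to exploit integrality to turn $\alpha\mapsto t_\alpha$ into a step function and then integrate it piece by piece. Since $S$ is unweighted, $w(S)=\sum_{p\in S}1=n$, so the quantity to compute is $\int_0^n t_\alpha\,\mathrm{d}\alpha$.

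The one substantive observation is that for every integer $i\in\{1,\dots,n\}$ and every $\alpha\in(i-1,i]$ we have $D_\varrho^S(\alpha)=D_\varrho^S(i)$, and hence $t_\alpha=t_i$. This is immediate from integrality: for any query point $q$ the value $\varrho(S,q)$ is an integer, so $\varrho(S,q)\ge\alpha$ holds if and only if $\varrho(S,q)\ge\lceil\alpha\rceil=i$. In particular $\alpha\mapsto t_\alpha$ is constant on each half-open interval $(i-1,i]$, so it is a bounded step function on $(0,n]$ and the integral is well defined.

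It then remains to split the integral along these intervals:
$$\int_0^n t_\alpha\,\mathrm{d}\alpha=\sum_{i=1}^n\int_{i-1}^{i} t_\alpha\,\mathrm{d}\alpha=\sum_{i=1}^n t_i,$$
where in the last step we use that $t_\alpha=t_i$ for all $\alpha\in(i-1,i]$, an interval of length $1$, and that the value of the integrand at the single endpoint $\alpha=i-1$ does not affect the integral.

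The only point requiring care is the bookkeeping of the half-open intervals, so that the pieces tile $(0,n]$ without overlap and each $t_i$ is picked up exactly once (in particular the interval $(0,1]$ contributes $t_1$, and the irrelevant value $t_0=d$ at $\alpha=0$ plays no role); beyond this there is no real obstacle. If desired one can additionally note, using sensitivity and locality, that $\varrho(S,q)\le n$ for all $q$, so $D_\varrho^S(\alpha)=\emptyset$ for $\alpha>n$ and nothing is lost by integrating only up to $w(S)=n$.
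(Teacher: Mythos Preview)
Your proof is correct and follows essentially the same approach as the paper: both use integrality to see that $t_\alpha$ is constant on unit intervals and then sum the pieces. If anything, your use of $\lceil\alpha\rceil$ is cleaner than the paper's $\lfloor\alpha\rfloor$ (which is a small slip there), and your explicit remark that $w(S)=n$ and that endpoint values are irrelevant makes the bookkeeping more transparent.
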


\begin{proof}
As $\varrho$ is integral, we have that $t_\alpha=t_{\lfloor \alpha \rfloor}$.
In particular, for every integer $i$ we have
\[\int_{i-1}^{i}t_\alpha \mathrm{d}\alpha=t_i.\]
\end{proof}

In the following, we will show that super-additive depth measures whose depth regions are compact and convex are cascading in two steps.
First we will show that if we partition a weighted point set into two parts whose median regions intersect and the cascade condition holds for both parts, then the cascade condition holds for the whole set.
In a second step, we prove that we can always partition a point set in such a way, further enforcing that none of the parts contains all points, that is, each part is a strict subset.
The claim then follows by induction.

Before we do this, let us describe a way to compute $\int_0^{w(S)}t_\alpha \mathrm{d}\alpha$.
Consider some depth region $D_\varrho^S(\alpha)$ of dimension $k$ and assume without loss of generality that the origin lies in the median region.
Being convex, this depth region lies in some $k$-dimensional linear subspace $H\subset\mathbb{R}^d$.
Considering all depth regions, they lie in a sequence of nested linear subspaces, also known as a \emph{flag}.
We can find a basis $F=\{f_1,\ldots,f_d\}$ of $\mathbb{R}^d$ such that each relevant linear subspace is spanned by a subset of the basis vectors.
We call $F$ a \emph{basis} of $S$.

For each weighted point set $S$ and each vector $v$, we define the \emph{survival time} $\tau_S(v):=\sup\{\alpha\mid v\in \text{span}(D_\varrho^S(\alpha))\}$.
Similarly, we define $\tau_S(0):=\alpha_0$, where as above $\alpha_0$ is the supremum value for which $D_{\varrho}^S(\alpha_0)\neq\emptyset$.
In other words, we view $\tau_S(0)$ as the survival time of the origin.

\begin{lemma}
\label{lem:integral}
Let $F=\{f_1,\ldots,f_d\}$ be a basis of $\mathbb{R}^d$, and write $f_0:=0$. Then
\[ \int_0^{w(S)}t_\alpha d\alpha\geq\sum_{i=0}^d\tau_S(f_i)-w(S).\]
Further, if $F$ is a basis of $S$, then we have equality.
\end{lemma}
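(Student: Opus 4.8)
The plan is to compute the integral $\int_0^{w(S)} t_\alpha\,d\alpha$ by slicing along the flag of nested subspaces spanned by the depth regions, using the basis $F$. The key observation is that since all depth regions $D_\varrho^S(\alpha)$ are convex and nested, and (after translating the origin into the median region) they all contain the origin, each $D_\varrho^S(\alpha)$ lies in a well-defined linear subspace $\mathrm{span}(D_\varrho^S(\alpha))$, and these subspaces form a flag $\{0\}=V_{-1}\subseteq V_0\subseteq\cdots$. The dimension $t_\alpha$ of $D_\varrho^S(\alpha)$ equals $\dim\mathrm{span}(D_\varrho^S(\alpha))$ when the region is nonempty, and we declare $t_\alpha=-1$ when it is empty; this is exactly what $\tau_S(0)=\alpha_0$ records. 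The survival times $\tau_S(f_i)$ then track, for each basis direction, the last moment that direction is still ``alive'' in the span of the current depth region.

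First I would make the reduction to a basis $F$ of $S$ in the sense defined above: if $F$ is an arbitrary basis of $\mathbb{R}^d$, each $V_j$ in the flag is still a linear subspace, but it need not be spanned by a subset of $F$; I claim that nonetheless $\sum_{i=0}^d \tau_S(f_i) \le \sum_j (\text{contribution of } V_j)$, with equality exactly when $F$ is adapted to the flag. Concretely, for a fixed $f_i$ with $i\ge 1$, we have $f_i\in\mathrm{span}(D_\varrho^S(\alpha))$ iff $\alpha\le$ some threshold; $\tau_S(f_i)$ is the supremum of those $\alpha$, so $\tau_S(f_i)\le \sup\{\alpha : V_{t_\alpha}\ni f_i\}$. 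The point is that a generic $f_i$ may fail to lie in a given $V_j$ even when $\dim V_j$ is large, which only decreases $\tau_S(f_i)$; when $F$ is a basis of $S$, each $V_j$ is spanned by exactly $j+1$ of the basis vectors, so exactly $j+1$ of the $f_i$ (including $f_0=0$) survive past each threshold, giving equality.

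Second, granting that $F$ is a basis of $S$, I would evaluate both sides directly. Order the distinct values $\alpha_0 > \beta_1 > \beta_2 > \cdots$ at which the dimension of the depth region drops, with $t_\alpha = k_0$ for $\alpha\in(\beta_1,\alpha_0]$, $t_\alpha = k_1$ for $\alpha\in(\beta_2,\beta_1]$, etc., and $t_\alpha=-1$ for $\alpha>\alpha_0$; note $t_\alpha = d$ for all small $\alpha>0$ since the full-dimensional region $D_\varrho^S(0^+)=\mathrm{conv}(S)$ (by locality/non-triviality the region at depth just above $0$ is the convex hull, which is $d$-dimensional as long as $S$ affinely spans — the degenerate case is handled by working inside the affine hull of $S$). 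On one hand, $\int_0^{w(S)} t_\alpha\,d\alpha$ is a sum of the values $t_\alpha$ weighted by the lengths of these intervals. On the other hand, because $F$ is a basis of $S$, for each $i\in\{0,\dots,d\}$ the vector $f_i$ lies in $\mathrm{span}(D_\varrho^S(\alpha))$ precisely for $\alpha$ up to the threshold where the dimension of the span drops below the index at which $f_i$ ``enters'' the flag; summing $\tau_S(f_i)$ over $i$ therefore counts each interval of length $\ell$ on which $t_\alpha = k$ exactly $k+1$ times (once for $f_0=0$ and once for each of the $k$ basis vectors spanning $V_k$), so $\sum_{i=0}^d \tau_S(f_i) = \int_0^{w(S)}(t_\alpha+1)\,d\alpha = \int_0^{w(S)} t_\alpha\,d\alpha + w(S)$, where the total integration length is $w(S)$ since $D_\varrho^S(\alpha)=\emptyset$ for $\alpha > w(S)$ (indeed for $\alpha>w(S)/\text{const}$, but in any case for $\alpha\ge w(S)$ by sensitivity from the empty set). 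Rearranging gives the claimed equality, and the inequality for a general basis follows from the first step.

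The main obstacle is the first step: carefully justifying that for a generic basis $F$ the survival times can only go down, i.e. that $\tau_S(f_i)$ is at most the ``ideal'' value it would take in an adapted basis, and pinning down the right bookkeeping so that the off-diagonal directions contribute a genuine inequality rather than being double-counted. One has to argue that for each fixed threshold interval, the number of $f_i$ (including $f_0$) lying in the corresponding subspace $V_k$ is at most $k+1$, with equality iff $F\cap V_k$ is a basis of $V_k$ — this is elementary linear algebra, but stating it cleanly against the continuum of depth regions and the definition of $\tau_S$ requires care. A secondary technical point is the degenerate case where $S$ does not affinely span $\mathbb{R}^d$ or the median region is lower-dimensional from the start; this is handled by replacing $\mathbb{R}^d$ with the affine hull of $S$ throughout and noting $f_0=0$ always survives until $\alpha_0$, so no term is lost.
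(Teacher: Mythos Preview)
Your approach is correct and, once stripped of the detours, is essentially the paper's argument recast via Fubini. The clean version of what you are doing is: write $\sum_{i=0}^d \tau_S(f_i)=\int_0^{w(S)} N(\alpha)\,d\alpha$ where $N(\alpha)=|\{i:\tau_S(f_i)\ge\alpha\}|$; since $f_1,\dots,f_d$ are linearly independent and $\mathrm{span}(D_\varrho^S(\alpha))$ has dimension $t_\alpha$, at most $t_\alpha$ of the nonzero $f_i$ survive at level $\alpha$, plus $f_0=0$ survives iff $\alpha\le\alpha_0$, so $N(\alpha)\le t_\alpha+1$ (with $t_\alpha=-1$ past $\alpha_0$). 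Integrating gives the inequality, and for an adapted basis exactly $t_\alpha$ of the nonzero $f_i$ span $\mathrm{span}(D_\varrho^S(\alpha))$, so $N(\alpha)=t_\alpha+1$ and equality holds.

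The paper does the dual computation: it orders $\tau_S(f_d)\le\cdots\le\tau_S(f_0)$, observes that for $\alpha\le\tau_S(f_i)$ the span contains $f_1,\dots,f_i$ and hence $t_\alpha\ge i$, splits the integral at the $\tau_S(f_i)$, and telescopes. This is exactly your Fubini inequality read in the other direction, so the two proofs are the same in substance. Your framing of the general-basis case as a ``reduction to an adapted basis'' is unnecessary and a little muddled (the sentence ``$\tau_S(f_i)\le\sup\{\alpha:V_{t_\alpha}\ni f_i\}$'' is just the definition, not an inequality, and comparing individual $\tau_S(f_i)$ to their ``ideal'' values is not the right bookkeeping---it is only the sum that is controlled). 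Drop that scaffolding and go straight to the linear-independence count $N(\alpha)\le t_\alpha+1$; that is the whole proof.
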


\begin{proof}
For an illustration of the proof see Figure \ref{fig:integral}.
Without loss of generality let $\tau_S(f_d)\leq\ldots\leq\tau_S(f_1)\leq\tau_S(f_0)$.
Consider some $\alpha\leq\tau_S(f_i)$.
The span of $D_\varrho^S(\alpha)$ contains all basis vectors $f_i,f_{i-1},\ldots,f_1$, thus we have $t_\alpha\geq i$.
We thus get
\begin{multline}
\int_0^{w(S)}t_\alpha d\alpha =
\int_0^{\tau_S(f_d)}t_\alpha d\alpha + \int_{\tau_S(f_d)}^{\tau_S(f_{d-1})}t_\alpha d\alpha +\ldots + \int_{\tau_S(f_1)}^{\tau_S(f_{0})}t_\alpha d\alpha + \int_{\tau_S(f_0)}^{w(S)}t_\alpha d\alpha \\
\geq \tau_S(f_d)\cdot d + (\tau_S(f_{d-1})-\tau_S(f_d))\cdot (d-1) +\ldots + (\tau_S(f_{0})-\tau_S(f_1))\cdot 0 + (w(S)-\tau_S(f_0))\cdot (-1) \\
= \tau_S(f_d)(d-(d-1)) + \tau_S(f_{d-1})((d-1)-(d-2)) +\ldots + \tau_S(f_0)(1-0)-w(S) =\sum_{i=0}^d\tau_S(f_i)-w(S).
\end{multline}

If $F$ is a basis of $S$, then for $\tau_S(f_{i+1})\leq\alpha\leq\tau_S(f_i)$ we have $t_\alpha= i$ and thus the above inequality is an equality.
\end{proof}


\begin{figure}
\centering
\includegraphics[scale=0.7]{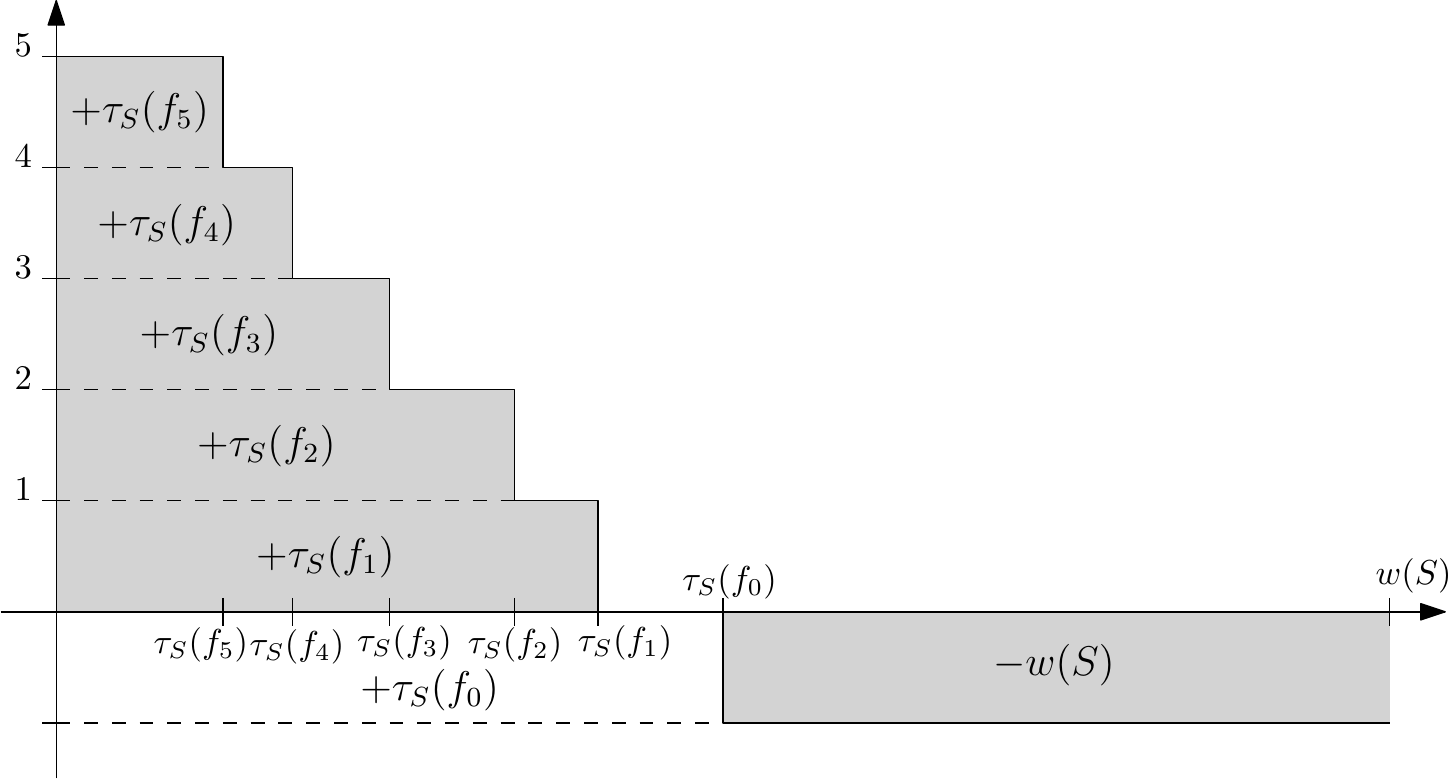}
\caption{$\int_0^{w(S)}t_\alpha d\alpha\geq\sum_{i=0}^d\tau_S(f_i)-w(S)$.}
\label{fig:integral}
\end{figure}

We will also use the following auxiliary lemma, which is well-known, see e.g.\ \cite{flags1}, Prop. 5.37 for a reference\footnote{In the reference, the result is stated for flags in $\mathbb{F}_q^n$, but as mentioned in a remark afterwards, the proof works for any vector space.}.

\begin{lemma}
\label{lem:common_basis}
Let $A$ and $B$ be two flags in a $d$-dimensional vector space $V$.
Then we can find a basis $F$ which is a common basis of both flags.
\end{lemma}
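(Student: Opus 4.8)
The plan is to prove Lemma~\ref{lem:common_basis} by induction on $d=\dim V$, eliminating one dimension at a time by passing to a quotient. Write the two flags as $A\colon \{0\}=A_0\subsetneq A_1\subsetneq\dots\subsetneq A_m=V$ and $B\colon \{0\}=B_0\subsetneq B_1\subsetneq\dots\subsetneq B_n=V$, so that a basis $F$ of $V$ is a common basis of both flags exactly when every $A_i$ and every $B_j$ is the span of a subset of $F$. For $d\le 1$ there is nothing to do, so assume $d\ge 2$. I would pick an arbitrary nonzero vector $v\in A_1$ and let $\pi\colon V\to\bar V:=V/\langle v\rangle$ be the quotient projection. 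The images $\pi(A_0)\subseteq\dots\subseteq\pi(A_m)=\bar V$ and $\pi(B_0)\subseteq\dots\subseteq\pi(B_n)=\bar V$ are increasing chains of subspaces of the $(d-1)$-dimensional space $\bar V$; after deleting repetitions they are flags $\bar A$ and $\bar B$, to which the induction hypothesis applies, yielding a common basis $\{\bar g_1,\dots,\bar g_{d-1}\}$ of $\bar V$.

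The heart of the argument is to lift this basis back to $V$ compatibly with \emph{both} original flags. For each $j$, let $A_{i(j)}$ be the smallest member of the flag $A$ with $\bar g_j\in\pi(A_{i(j)})$ and let $B_{i'(j)}$ be the smallest member of the flag $B$ with $\bar g_j\in\pi(B_{i'(j)})$; these exist since $\pi(V)=\bar V$, and both are nonzero subspaces because $\bar g_j\neq 0$, so in particular $v\in A_1\subseteq A_{i(j)}$. I would then choose the lift $g_j$ of $\bar g_j$ inside $A_{i(j)}\cap B_{i'(j)}$. This is possible because whenever one of two subspaces $X,Y\subseteq V$ contains $v$ one has $\pi(X\cap Y)=\pi(X)\cap\pi(Y)$ (a two-line check), so $\bar g_j\in\pi(A_{i(j)})\cap\pi(B_{i'(j)})=\pi(A_{i(j)}\cap B_{i'(j)})$ and a lift inside the intersection exists. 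Setting $F:=\{g_1,\dots,g_{d-1},v\}$, the $g_j$ project to a basis of $\bar V$ and $v$ spans $\ker\pi$, so $F$ is a basis of $V$.

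Finally I would verify that $F$ is adapted to both flags by a dimension count. Every nonzero member $A_i$ contains $A_1\ni v$; writing $J_i:=\{j:\bar g_j\in\pi(A_i)\}$, the induction hypothesis gives $\pi(A_i)=\operatorname{span}\{\bar g_j: j\in J_i\}$, each $g_j$ with $j\in J_i$ lies in $A_{i(j)}\subseteq A_i$, and since $\dim A_i=|J_i|+1$ this forces $A_i=\operatorname{span}(\{g_j:j\in J_i\}\cup\{v\})$, a subset of $F$. For a member $B_i$ the same computation works when $v\in B_i$; when $v\notin B_i$ the map $\pi$ is injective on $B_i$, so $\dim B_i=|K_i|$ with $K_i:=\{j:\bar g_j\in\pi(B_i)\}$, and the $|K_i|$ linearly independent vectors $g_j$ ($j\in K_i$), all of which lie in $B_i$, already span it. The delicate point, and the step I expect to be the main obstacle, is exactly this simultaneous lifting: an arbitrary lift of $\{\bar g_j\}$ automatically respects $\bar A$ but in general fails to respect $B$, and the argument only closes because $v$ was chosen inside the smallest member of one of the flags, which is precisely what makes intersections of flag members commute with $\pi$.
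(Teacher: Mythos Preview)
Your proof is correct. The inductive argument via the quotient $V/\langle v\rangle$ with $v\in A_1$ is clean, and the crucial step---lifting each $\bar g_j$ into $A_{i(j)}\cap B_{i'(j)}$ using that $\pi(X\cap Y)=\pi(X)\cap\pi(Y)$ whenever $v\in X$---is exactly the right idea and is properly justified. The verification that the resulting $F$ spans each $A_i$ and each $B_j$ by the dimension count is also fine.

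As for comparison with the paper: the paper does not prove this lemma at all. It simply states it as well-known and points to a reference (Prop.~5.37 in the cited source on flags), remarking that the proof there, though phrased over $\mathbb{F}_q^n$, works for any vector space. So there is no ``paper's own proof'' to compare against; you have supplied a complete, self-contained argument where the paper defers to the literature. Your approach is one of the standard proofs of this fact (the existence of a common refining basis for two flags is essentially the combinatorial core of the Bruhat decomposition), and nothing in it is in tension with how the result is used downstream in Lemma~\ref{lem:cascade_sum}.
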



\begin{lemma}
\label{lem:cascade_sum}
Let $\varrho$ be a super-additive depth measure whose depth regions are convex and let $S_1$ and $S_2$ be two weighted point sets in $\mathbb{R}^d$ whose median regions intersect.
Assume that the cascade condition holds for $S_1$ and $S_2$.
Then the cascade condition holds for $S_1\cup S_2$.
\end{lemma}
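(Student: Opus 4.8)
The plan is to reduce everything to Lemma~\ref{lem:integral} evaluated at a single, carefully chosen basis, using super-additivity to compare the depth regions of $S:=S_1\cup S_2$ with those of $S_1$ and $S_2$. First I would translate so that a common point $p$ of the median regions $M_\varrho(S_1)$ and $M_\varrho(S_2)$ (which exists by hypothesis) lies at the origin. After this translation every depth region of $S_1$ contains the origin, since these regions are nested and all contain $M_\varrho(S_1)\ni 0$; likewise for $S_2$. Hence the spans of the depth regions of $S_1$ form a flag, as do those of $S_2$, and by Lemma~\ref{lem:common_basis} there is a basis $F=\{f_1,\dots,f_d\}$ of $\mathbb{R}^d$ that is a common basis of both flags; in particular $F$ is a basis of $S_1$ and of $S_2$ in the sense used in Lemma~\ref{lem:integral}. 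I would want slightly more from $F$: for every subspace $L$ occurring in either flag, the deepest depth region whose span is $L$ is full-dimensional inside $L$ and contains the origin, so its cone of feasible directions at the origin is full-dimensional in $L$; I would choose $F$ so that, in addition to being a common basis of the two flags, each $f_i$ lies in this feasible-direction cone for every depth region (of $S_1$ and of $S_2$) whose span contains $f_i$ — intuitively, each $f_i$ points from the origin into all the depth regions it is relevant for.

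Granting such an $F$, the computation is short. Since $F$ is a basis of $S_1$, the equality case of Lemma~\ref{lem:integral} together with the cascade condition for $S_1$ gives
$$\sum_{i=0}^d\tau_{S_1}(f_i)\;=\;\int_0^{w(S_1)}t_\alpha\,\mathrm{d}\alpha\;+\;w(S_1)\;\geq\;w(S_1),$$
and symmetrically $\sum_{i=0}^d\tau_{S_2}(f_i)\geq w(S_2)$. Applying the inequality case of Lemma~\ref{lem:integral} to $S$ with the basis $F$ gives
$$\int_0^{w(S)}t_\alpha\,\mathrm{d}\alpha\;\geq\;\sum_{i=0}^d\tau_S(f_i)-w(S).$$
Using $w(S)=w(S_1)+w(S_2)$, it therefore suffices to establish the termwise estimate $\tau_S(f_i)\geq\tau_{S_1}(f_i)+\tau_{S_2}(f_i)$ for each $i\in\{0,\dots,d\}$; summing it and subtracting $w(S)$ then yields $\int_0^{w(S)}t_\alpha\,\mathrm{d}\alpha\geq 0$, which is the cascade condition for $S$.

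For the termwise estimate I would use super-additivity. Passing to the standard cluster description of weighted point sets, the parts $S_1$ and $S_2$ may be treated as disjoint subsets of $S$, so $\varrho(S,q)\geq\varrho(S_1,q)+\varrho(S_2,q)$ for all $q$, and hence $D_\varrho^{S_1}(\alpha_1)\cap D_\varrho^{S_2}(\alpha_2)\subseteq D_\varrho^{S}(\alpha_1+\alpha_2)$ for all $\alpha_1,\alpha_2\geq 0$. For $i=0$ this already settles the estimate, since $0\in M_\varrho(S_1)\cap M_\varrho(S_2)$ lies in $D_\varrho^{S_1}(\alpha_0^{(1)})\cap D_\varrho^{S_2}(\alpha_0^{(2)})\subseteq D_\varrho^S(\alpha_0^{(1)}+\alpha_0^{(2)})$, so $\tau_S(0)=\alpha_0^S\geq\alpha_0^{(1)}+\alpha_0^{(2)}=\tau_{S_1}(0)+\tau_{S_2}(0)$. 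For $i\geq 1$ we may assume $\tau_{S_1}(f_i)$ and $\tau_{S_2}(f_i)$ are both positive (otherwise the estimate follows from $D_\varrho^{S_2}(\alpha)\subseteq D_\varrho^{S}(\alpha)$ or its analogue for $S_1$, which themselves follow from super-additivity and $\varrho\geq 0$). Fix $\alpha_1<\tau_{S_1}(f_i)$ and $\alpha_2<\tau_{S_2}(f_i)$; then $f_i\in\mathrm{span}(D_\varrho^{S_1}(\alpha_1))$ and $f_i\in\mathrm{span}(D_\varrho^{S_2}(\alpha_2))$, so by the choice of $F$ each of these regions contains a positive multiple of $f_i$, and being convex and containing the origin it contains the whole segment from the origin to that multiple; hence the two regions share a common point $\lambda f_i$ with $\lambda>0$. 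Then $\lambda f_i\in D_\varrho^S(\alpha_1+\alpha_2)$, so $f_i\in\mathrm{span}(D_\varrho^S(\alpha_1+\alpha_2))$ and $\tau_S(f_i)\geq\alpha_1+\alpha_2$. Letting $\alpha_1\to\tau_{S_1}(f_i)$ and $\alpha_2\to\tau_{S_2}(f_i)$ completes the claim, and with it the lemma.

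The step I expect to be the main obstacle is producing the basis $F$: it must be a common basis of the two flags \emph{and} have every vector pointing into all the depth regions it is relevant for. The individual feasible-direction cones are full-dimensional in their spans, which gives ample room, and one can obtain $F$ by running the construction underlying Lemma~\ref{lem:common_basis} — adapting a basis step by step through the poset of subspaces generated by the two flags — while at each step picking the new basis vector inside the appropriate full-dimensional cone instead of arbitrarily. Making this simultaneous choice precise, and checking that it never conflicts with flag-compatibility, is the delicate part; everything else is bookkeeping with Lemmas~\ref{lem:integral} and~\ref{lem:common_basis} and the super-additivity axiom.
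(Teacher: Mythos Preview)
Your approach is essentially the paper's: pick a common basis via Lemma~\ref{lem:common_basis}, prove the termwise estimate $\tau_{S}(f_i)\geq\tau_{S_1}(f_i)+\tau_{S_2}(f_i)$ from super-additivity, and finish with Lemma~\ref{lem:integral}. The paper sidesteps your ``main obstacle'' by working with the whole line $\ell_i=\{\lambda f_i:\lambda\in\mathbb{R}\}$ rather than the ray: since the origin lies in both depth regions and $f_i$ lies in both spans, each region meets $\ell_i$ in an interval through $0$ with non-empty relative interior, and the paper takes a nonzero point in the intersection of these two intervals---so no feasible-direction constraint on $F$ is imposed and Lemma~\ref{lem:common_basis} is used as a black box.
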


\begin{proof}
We may assume without loss of generality that the origin is in both median regions.
By Lemma \ref{lem:common_basis}, we can choose a basis $F=\{f_1,\ldots,f_d\}$ of $\mathbb{R}^d$ that is a basis of both $S_1$ and $S_2$.
Again, we write $f_0=0$.

For $i\neq 0$, let $\ell_i$ be the line $\{\lambda f_i\mid \lambda\in\mathbb{R}\}$.
Note that by the convexity of the depth regions $f_i\in\text{span}(D_\varrho^S(\alpha))\}$ implies that $\ell_i\cap D_\varrho^S(\alpha)$ is an interval with non-empty relative interior.
In particular, assuming $f_i\in\text{span}(D_\varrho^{S_1}(\alpha_1))\}$ and $f_i\in\text{span}(D_\varrho^{S_2}(\alpha_2))\}$, we get two such intervals whose intersection has a non-empty relative interior.
Thus, there is a point $p\neq 0$ on $\ell_i$ which lies both in $D_\varrho^{S_1}(\alpha_1)$ and $D_\varrho^{S_2}(\alpha_2)$.
By the super-additivity condition, $p$ lies in $D_\varrho^{S_1\cup S_2}(\alpha_1+\alpha_2)$.
It follows that $\tau_{S_1\cup S_2}(f_i)\geq\tau_{S_1}(f_i)+\tau_{S_2}(f_i)$ for $i\neq 0$.
The same argument for the origin gives $\tau_{S_1\cup S_2}(f_0)\geq\tau_{S_1}(f_0)+\tau_{S_2}(f_0)$.
Thus, using Lemma \ref{lem:integral} we get
\begin{multline}
\int_0^{w(S_1\cup S_2)}t_\alpha d\alpha\geq
\sum_{i=0}^d\tau_{S_1\cup S_2}(f_i)-w(S_1\cup S_2)\geq\sum_{i=0}^d(\tau_{S_1}(f_i)+\tau_{S_2}(f_i))-(w(S_1)+w(S_2))\\
=\sum_{i=0}^d\tau_{S_1}(f_i)-w(S_1) + \sum_{i=0}^d\tau_{S_2}(f_i)-w(S_2)
=\int_0^{w(S_1)}t_\alpha d\alpha+\int_0^{w(S_2)}t_\alpha d\alpha\geq 0.
\end{multline}
\end{proof}

\begin{lemma}
\label{lem:cascade_partition}
Let $\varrho$ be a super-additive depth measure whose depth regions are compact and convex and let $S$ be a weighted point set in $\mathbb{R}^d$ with $|S|\geq d+2$.
Then there exists a partition of $S$ into strict subsets $S_1$ and $S_2$ whose median regions intersect.
\end{lemma}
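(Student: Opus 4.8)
\emph{Overview of the plan.} I would prove the lemma by exhibiting a partition in which one part, $S_1$, is a \emph{uniformly weighted simplex} on points of $S$ that carries only a tiny fraction of the total mass, and the other part, $S_2$, carries almost all of the mass. The point is that a uniformly weighted simplex has its whole convex hull as median region, which can be arranged to contain a deepest point of $S$; one then wants that same point to remain deepest in $S_2$. This is exactly the kind of construction that needs weights: $S_1$ consists of points of $S$ that also occur, with most of their weight, in $S_2$. (We may assume all weights of $S$ are positive.) Throughout, fix a point $q$ in the relative interior of $M_\varrho(S)$, which is non-empty because the depth regions are compact and convex by hypothesis, and write $\alpha_0=\varrho(S,q)$ for the maximal depth.

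\emph{The building block.} First I would show: if $T'\subseteq S$ is affinely independent and we re-weight all of its points by one common value $c>0$, the resulting weighted set $\widehat{T'}$ satisfies $\varrho(\widehat{T'},\cdot)\equiv c$ on $\text{conv}(T')$, hence $M_\varrho(\widehat{T'})=\text{conv}(T')$. Indeed, non-triviality gives $\varrho(\widehat{T'},x)\ge c$ for $x\in\text{conv}(T')$; Observation~\ref{obs:upper_bound} gives $\varrho(\widehat{T'},x)\le\text{TD}(\widehat{T'},x)$ everywhere; and $\text{TD}(\widehat{T'},x)=c$ for $x\in\text{conv}(T')$, since any closed halfspace through a point of a simplex contains at least one vertex, while from any such point one can pass a hyperplane that has exactly one vertex on one side (a supporting hyperplane at a vertex; a hyperplane parallel to the opposite facet at an interior point; the obvious analogue on a face). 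Locality handles points outside $\text{conv}(T')$. This is the one step I am confident about, and it is plainly weight-driven.

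\emph{Constructing the partition.} By Carathéodory I would pick an affinely independent $T\subseteq S$ with $q\in\text{conv}(T)$ and $|T|\le d+1$; since $|S|\ge d+2$ there is a point of $S$ outside $T$. Set $\mu:=\min_{p\in T}w(p)>0$, let $S_1$ be $T$ carrying the constant weight $\mu$, and let $S_2$ be the complementary part, $w_{S_2}=w-w_{S_1}$. By the building block $M_\varrho(S_1)=\text{conv}(T)\ni q$. Moreover $S_1$ is a strict subset of $S$ (its support $T$ has at most $d+1\le|S|-1$ points), and $S_2$ is a strict subset of $S$ (it omits the point of $T$ realizing the minimum $\mu$); so $(S_1,S_2)$ is a partition into strict subsets, and it remains only to show $\text{conv}(T)\cap M_\varrho(S_2)\neq\emptyset$.

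\emph{Where the work is.} From super-additivity, $\varrho(S_2,x)\le\varrho(S,x)-\varrho(S_1,x)=\varrho(S,x)-\mu\le\alpha_0-\mu$ for every $x\in\text{conv}(T)$, while $\le d+1$ applications of sensitivity give $\varrho(S_2,q)\ge\alpha_0-(d+1)\mu$. Thus $q$ is within $O(\mu)$ of being a deepest point of $S_2$, and the only obstruction to $q\in M_\varrho(S_2)$ (hence to $q\in M_\varrho(S_1)\cap M_\varrho(S_2)$) is a point $x^\ast\notin\text{conv}(T)$ that is strictly deeper in $S_2$ than $\alpha_0-\mu$; by locality $\varrho(S_1,x^\ast)=0$, so super-additivity would force $\varrho(S,x^\ast)$ into the narrow band $(\alpha_0-\mu,\alpha_0]$. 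When $S$ is unweighted and $\varrho$ integral we have $\mu=1$ and this band contains only $\alpha_0$, so $x^\ast\in M_\varrho(S)$, and a contradiction follows if $T$ is chosen with $\text{conv}(T)\supseteq M_\varrho(S)$ (arrangeable because $q$ lies in the relative interior of $M_\varrho(S)$). For genuinely weighted $S$ this discreteness is gone, and closing the gap is the main obstacle: I expect it has to be handled by running the construction through a one-parameter family (scaling $\mu\downarrow0$, or varying $T$) and passing to a limit — using that as $\mu\to0$ both the maximal depth of $S_2$ and $\varrho(S_2,q)$ tend to $\alpha_0$, together with the upper semicontinuity of the median region in the weights — and/or by choosing the partition extremally (for instance maximizing the sum of the two maximal depths) so that the alternative ``$M_\varrho(S_1)$ and $M_\varrho(S_2)$ are separated by a hyperplane'' can be ruled out by an exchange argument. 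It is here, not in the construction, that the compact-and-convex hypothesis and the presence of weights should be essential.
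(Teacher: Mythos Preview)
Your approach is entirely different from the paper's: the paper does \emph{not} construct a specific partition but instead parametrizes all weighted partitions into complementary strict subsets by the barycentric subdivision of $\partial\Delta^{|S|-1}$ (so that antipodal points correspond to complementary parts), forms the Minkowski difference $Q(b)=M(b)-M(-b)$ of the two median regions, and then invokes a Borsuk--Ulam–type result on multivalued sections of the resulting bundle over $\partial\Delta/\!\!\sim$ to force $0\in Q(b)$ for some $b$. The compactness/convexity hypotheses enter precisely to make $Q$ a compact convex multisection.

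Your constructive route has a real gap that already bites in the unweighted integral case. You claim $T$ can be chosen with $\text{conv}(T)\supseteq M_\varrho(S)$, but ``$q$ lies in the relative interior of $M_\varrho(S)$'' does not imply this. Take $\varrho=\text{TD}$ and $S$ the four vertices of the square $\{\pm1\}^2$ in $\mathbb{R}^2$. Then $M_{\text{TD}}(S)=D_{\text{TD}}^S(2)=\bigcap_{v\in S}\text{conv}(S\setminus\{v\})$ is the inscribed diamond with vertices $(\pm1,0),(0,\pm1)$, and no triangle on three vertices of $S$ contains it. Running your construction with any such $T$ gives $\mu=1$ and $S_2$ supported on the single remaining vertex $v$, whence $M_\varrho(S_2)=\{v\}\not\subset\text{conv}(T)$; the median regions miss. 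The partition that actually works here---the two diagonals---is not of your simplex-plus-complement form.

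The proposed rescue by sending $\mu\downarrow 0$ also fails structurally: as soon as $\mu<\min_{p\in T}w(p)$, every point of $T$ retains positive weight in $S_2$, so the underlying point set of $S_2$ is all of $S$ and $S_2$ is no longer a \emph{strict} subset; the limit is the degenerate partition $(\emptyset,S)$. The vaguer suggestions (vary $T$, optimize over partitions, exchange across a separating hyperplane) are really asking for an existence argument over the whole space of partitions, which is exactly what the paper's topological step supplies. I do not see an elementary substitute: the missing ingredient is a mechanism guaranteeing that \emph{some} partition has intersecting median regions, and a single hand-picked partition will not do.
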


\begin{proof}
Consider the barycentric subdivision $B$ of the boundary $\partial\Delta$ of the simplex with vertices $S$.
There is a natural identification of the vertices of $B$ with strict subsets of $S$ (see Figure \ref{fig:barycentric}).
Thus, for any such vertex $b\in B$ we get a strict weighted subset $S(b)$ with $w_{S(b)}(p)=w(p)$ if $p$ is in the subset $S(b)$ and $w_{S(b)}(p)=0$ otherwise.
We can extend this assignment linearly to $\partial\Delta$: for each point $b$ in the relative interior of a simplex $(b_1,\ldots,b_k)$ of $B$, consider its barycentric coordinates $(x_1,\ldots,x_k)$, $\sum x_i=1$, and define $w_{S(b)}(p):=\sum x_i\cdot w_{S(b_i)}(p)$ for each $p\in S$.
This defines a continuous map which assigns to each point $b$ on $\partial\Delta$ a strict weighted subset $S(b)$ of $S$.
Further, under the natural antipodality on $\partial\Delta$, we get complements of the weighted subsets, that is, $S(-b)=S(b)^C$.

We claim that for some point $b$ on $\partial\Delta$ we have that the median regions $M(b)$ and $M(-b)$ of $S(b)$ and $S(-b)$ intersect.
If this is true, our claim follows by setting $S_1=S(b)$ and $S_2=S(-b)$.
For a set $A$, denote by $-A$ its reflection at the origin.
For each $b$ on $\partial\Delta$, let $Q(b):=M(b)-M(-b)$ be the Minkowski sum of $M(b)$ and $-M(-b)$.
Note that $M(b)$ and $M(-b)$ intersect if and only if $0\in Q(b)$ and that $Q(-b)=-Q(b)$.
Further, as $Q(b)$ is a Minkowski sum of compact convex sets, $Q(b)$ is itself compact and convex.

Consider now the real vector bundle $\pi: E\rightarrow B$ obtained from attaching $\mathbb{R}^d$ to each point of $\partial\Delta$ and taking the quotient with respect to the antipodality and let $z$ be its zero section.
Following \cite{Zivaljevic}, we say that $\phi$ is a multivalued section if for every $x\in B$ we have that $\phi(x)\subseteq \pi^{-1}(x)$.
A multivalued section is said to be convex if $\phi(x)$ is convex for every $x\in B$.
A multivalued section is called compact if $\Gamma(\phi):=\{(x,v)\mid v\in\phi (x)\}$ is a compact set in $B\times E$.
It follows from the above arguments that $Q$ is a convex multivalued section.
Further, as each $Q(b)$ is compact and $\partial\Delta$ is compact, $Q$ is also compact.
Now, Proposition 1 from \cite{Zivaljevic} states that if $\pi: E\rightarrow B$ is a real vector bundle over a compact space $B$ which does not admit a nowhere zero section, then for every multivalued convex compact section $\phi$ there exists a point $x\in B$ for which $z(x)\in \phi(x)$.
Thus, in order to show that some $Q(b)$ contains the origin, it is sufficient to show that $\pi: E\rightarrow B$ does not admit a nowhere zero section.

Now, $\partial\Delta$ is homeomorphic to the sphere $S^{|S|-2}$, and the antipodality on $\partial\Delta$ corresponds to the standard antipodality on the sphere.
As $|S|\geq d+2$, the non-existence of a nowhere zero section thus follows from the Borsuk-Ulam theorem.
\end{proof}

While we have only shown that there is a partition, Bourgin-Yang-type theorems \cite{Bourgin, Yang} tell us, that the space of possible partitions has to be large.
In particular, it has dimension at least $|S|-d-2$.
Depending on the application, this might be used to enforce other conditions on the partitions.

\begin{figure}
\centering
\includegraphics[scale=0.7]{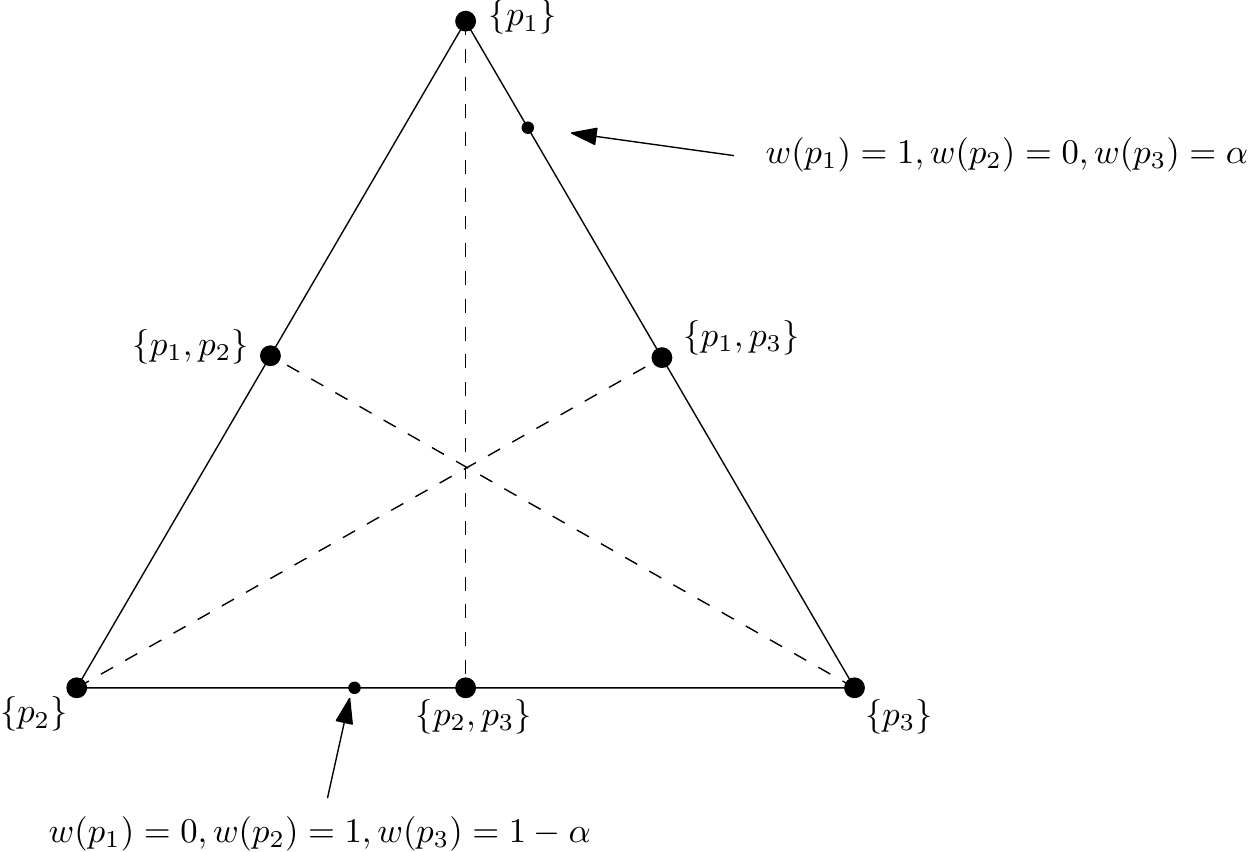}
\caption{Vertices of the barycentric subdivision correspond to strict subsets.}
\label{fig:barycentric}
\end{figure}

\begin{theorem}
Let $\varrho$ be a super-additive depth measure whose depth regions are compact and convex.
Then $\varrho$ is cascading.
\end{theorem}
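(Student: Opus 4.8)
The plan is to prove the theorem by strong induction on the number of points $|S|$ of the weighted point set $S$, using the two lemmas just established (Lemma~\ref{lem:cascade_sum} and Lemma~\ref{lem:cascade_partition}) as the inductive step and engine, respectively. The base case is $|S|\leq d+1$: here the simplex spanned by $S$ (if full-dimensional) or the affine hull of $S$ has dimension $k\leq d$, and for every $\alpha$ in the range $(0,w(S)]$ the depth region $D_\varrho^S(\alpha)$, being nonempty for small $\alpha$ by non-triviality and locality, is contained in $\mathrm{conv}(S)$, hence $t_\alpha\leq k\leq d$ throughout. One must check that $\int_0^{w(S)}t_\alpha\,d\alpha\geq 0$ directly in this regime; I would invoke Lemma~\ref{lem:integral} with a basis $F$ of $S$ adapted to the flag of depth regions, reducing the claim to $\sum_{i=0}^d\tau_S(f_i)\geq w(S)$. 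Since at $\alpha=\tau_S(f_0)=\alpha_0$ the median region is a single point (or low-dimensional set) that survives until $\alpha_0$, and the earlier survival times dominate, the inequality should follow from the centerpoint-type bound implicit in super-additivity; concretely, for $|S|\le d+1$ one argues that the median region already has dimension $|S|-1$ for small $\alpha$ and all points of $S$ themselves lie in $D_\varrho^S(\min_p w(p))$, so the flag is short and the sum of survival times is easily at least $w(S)$.

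For the inductive step, suppose the statement holds for all weighted point sets with strictly fewer underlying points, and let $|S|\geq d+2$. By Lemma~\ref{lem:cascade_partition} there is a partition of $S$ into strict subsets $S_1$ and $S_2$ whose median regions intersect. Since $S_1$ and $S_2$ are strict subsets, each has strictly fewer underlying points than $S$ (each omits at least one point of $S$ entirely, having assigned it weight zero), so the induction hypothesis applies and the cascade condition holds for both $S_1$ and $S_2$. Now Lemma~\ref{lem:cascade_sum} — which requires exactly that $\varrho$ be super-additive with convex depth regions, that the median regions of $S_1$ and $S_2$ intersect, and that the cascade condition holds for each — lets us conclude that the cascade condition holds for $S_1\cup S_2$. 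It remains to observe that $S_1\cup S_2=S$ as weighted point sets: by the definition of a partition of a weighted point set, $w(p)=w_{S_1}(p)+w_{S_2}(p)$ for every $p\in S$, which is precisely the definition of the weight function on the union $S_1\cup S_2$. Hence $\varrho$ is cascading on $S$, completing the induction.

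The main subtlety is making sure the induction is well-founded, i.e.\ that "strict subset" really does decrease a quantity on which we can induct: the point is that being a \emph{strict} subset in the sense defined in the paper forces the underlying point set to shrink (there is a point of $S$ receiving weight $0$), so $|S_1|<|S|$ and $|S_2|<|S|$ as \emph{cardinalities of underlying point sets}, even though the total weights $w(S_1)$ and $w(S_2)$ need not be small. This is also why Lemma~\ref{lem:cascade_partition} was phrased with the strictness requirement, and why the whole argument was set up for weighted point sets in the first place. A secondary point to handle carefully is the base case bookkeeping when $S$ is not full-dimensional or when weights are such that $\mathrm{conv}(S)$ is lower-dimensional, but in all these cases the depth regions are confined to $\mathrm{aff}(S)$ and the bound $t_\alpha\le\dim\mathrm{aff}(S)\le d$ combined with the guaranteed survival of a nonempty median region gives $\int_0^{w(S)}t_\alpha\,d\alpha\ge 0$ with room to spare. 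I expect the bulk of the writing to be in cleanly stating the base case; the inductive step is essentially a two-line invocation of the preceding two lemmas.
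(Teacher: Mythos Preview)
Your overall architecture is exactly the paper's: induct on $|S|$, use Lemma~\ref{lem:cascade_partition} to split and Lemma~\ref{lem:cascade_sum} to recombine, with the strict-subset condition guaranteeing the underlying cardinality drops. The inductive step and the well-foundedness discussion are correct and match the paper essentially verbatim.

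The gap is in the base case. Your sketch invokes ``a centerpoint-type bound implicit in super-additivity'' and the observation that $\mathrm{conv}(S)\subseteq D_\varrho^S(\min_p w(p))$, but this is not enough: from that alone you only get $\tau_S(f_i)\geq \min_p w(p)$ for each $i$, hence $\sum_{i=0}^d\tau_S(f_i)\geq (d+1)\min_p w(p)$, which can be far smaller than $w(S)$ when the weights are unequal. The correct argument does not use super-additivity at all; it uses the weighted non-triviality axiom (iii) applied face by face. Order the vertices so that $w(s_1)\leq\cdots\leq w(s_{d+1})$ and take $f_k:=s_{d+1}-s_k$. For any $\alpha\leq w(s_k)$, every point $q$ in the face $\mathrm{conv}(s_k,\ldots,s_{d+1})$ has $\varrho(S,q)\geq\min\{w(s_j):j\geq k\}=w(s_k)\geq\alpha$ by non-triviality, so that face lies in $D_\varrho^S(\alpha)$ and hence $f_k\in\mathrm{span}(D_\varrho^S(\alpha))$. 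This gives $\tau_S(f_k)\geq w(s_k)$ for $k=1,\ldots,d$, and similarly $\tau_S(f_0)\geq w(s_{d+1})$ since the vertex $s_{d+1}$ alone already has depth $\geq w(s_{d+1})$. Summing yields $\sum_{i=0}^d\tau_S(f_i)\geq\sum_{k=1}^{d+1}w(s_k)=w(S)$, and Lemma~\ref{lem:integral} finishes. The non-full-dimensional case is handled, as the paper does, by passing to the affine hull.
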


\begin{proof}
Let $S$ be a weighted point set in $\mathbb{R}^d$.
We may assume that $S$ does not lie in some lower-dimensional affine subspace, otherwise we can do the same arguments in this lower-dimensional ambient space.
We want to show that the cascade condition holds for $S$.
We prove this by induction on $|S|$.
If $|S|< d+1$, then $S$ lies in some lower-dimensional subspace.
If $|S|= d+1$ and $S$ does not lie in some lower-dimensional subspace, then $S$ must be the vertices of a simplex $\Delta$.
By the non-triviality condition (iii), for any point $q$ in a face $F$ of $\Delta$, we have $\varrho(S,q)\geq \min\{w(s):s\text{ is a vertex of } F\}$.
Let $(s_1,\ldots,s_{d+1})$ be an ordering of the points such that $w(s_1)\leq w(s_2)\leq \ldots\leq w(s_{d+1})$ and let $f_i:=s_{d+1}-s_i$ be a basis of $\mathbb{R}^d$.
As $(s_1,\ldots,s_k)$ spans a face of $\Delta$, for each $k\leq d+1$ we have that for $\alpha\leq w(s_k)$ the region $D_\varrho^S(\alpha)$ contains the convex hull of the points $(s_k,\ldots s_{d+1})$.
In particular, its dimension is at least $d+1-k$ and the survival time $\tau_S(f_k)$ of $f_k$ is at least $w(s_k)$.
It follows that $\sum_{i=0}^{d}\tau_S(f_i)\geq w(S)$ and thus by Lemma \ref{lem:integral} $\int_0^{w(S)}t_\alpha d\alpha\geq\sum_{i=0}^d\tau_S(f_i)-w(S)\geq 0$.

Now assume that $|S|\geq d+2$.
By Lemma \ref{lem:cascade_partition}, we can partition $S$ into $S_1$ and $S_2$ whose median regions intersect.
Note that $|S_1|,|S_2|<|S|$, so by the induction hypothesis the cascade condition holds for both $S_1$ and $S_2$.
Thus, by Lemma \ref{lem:cascade_sum}, the cascade condition also holds for $S$.
\end{proof}

As noted above, an example of a super-additive depth measure with compact convex depth regions is Tukey depth.
Thus, we get the following.

\begin{corollary}
Tukey depth is cascading.
\end{corollary}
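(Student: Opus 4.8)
The plan is to invoke the main theorem of this section, which states that any super-additive depth measure whose depth regions are compact and convex is cascading, and then simply verify that Tukey depth satisfies all the required hypotheses. So the proof reduces to two checks: (a) Tukey depth is super-additive, and (b) Tukey depth regions are compact and convex.

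For (a), I would recall the axioms. Sensitivity (i) holds because adding a single point $p$ to $S$ can change the count of points in any closed halfspace containing $q$ by at most one, so the minimum over all such halfspaces also moves by at most one (this is essentially noted in the discussion before Observation~\ref{obs:upper_bound}). Locality (ii) is immediate: if $q \notin \operatorname{conv}(S)$, a separating hyperplane gives a closed halfspace containing $q$ but no point of $S$, so $\text{TD}(S,q) = 0$. Non-triviality (iii): if $q \in \operatorname{conv}(S)$, then every closed halfspace with $q$ on its boundary, and hence every closed halfspace containing $q$, must contain at least one point of $S$, for otherwise $S$ would lie strictly on one side and $q$ could not be in its convex hull; thus $\text{TD}(S,q) \ge 1$. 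Super-additivity (iv): for disjoint $S_1, S_2 \subseteq S$ and any closed halfspace $h$ containing $q$, the number of points of $S$ in $h$ is at least the number of points of $S_1$ in $h$ plus the number of points of $S_2$ in $h$, which is at least $\text{TD}(S_1,q) + \text{TD}(S_2,q)$; taking the minimum over $h$ gives $\text{TD}(S,q) \ge \text{TD}(S_1,q) + \text{TD}(S_2,q)$.

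For (b), the Tukey depth region $D_{\text{TD}}^S(\alpha)$ is the set of points $q$ lying in at least... equivalently, $q$ has Tukey depth $\ge \alpha$ iff every open halfspace containing at most $|S| - \alpha$ fewer than... more cleanly: $q \notin D_{\text{TD}}^S(\alpha)$ iff there is an open halfspace containing $q$ with at most $\alpha - 1$ points of $S$. The complement is thus a union of open halfspaces, so $D_{\text{TD}}^S(\alpha)$ is closed; boundedness follows since for $q$ far enough from $\operatorname{conv}(S)$ the depth is $0$, so for $\alpha > 0$ the region lies inside $\operatorname{conv}(S)$ (and for $\alpha \le 0$ one takes the convention giving all of $\mathbb{R}^d$, which is handled separately or excluded). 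Convexity is the classical fact: one shows $D_{\text{TD}}^S(\alpha) = \bigcap \{ h : h \text{ a closed halfspace with } |S \cap h^c| \le \alpha - 1 \}$ is an intersection of halfspaces; alternatively, if $q, q'$ both have depth $\ge \alpha$ and $q''$ is on the segment between them, any closed halfspace $h$ containing $q''$ contains $q$ or $q'$ (as $h^c$ is convex and cannot contain both endpoints if it excludes a point between them — actually one argues via the separating hyperplane), hence $|S \cap h| \ge \alpha$.

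I expect the only mildly delicate point to be the convexity argument, since the cleanest route is to represent the region as an intersection of halfspaces and one must be slightly careful about open versus closed halfspaces and the case $\alpha \le 0$; but this is standard and well-documented in the literature (e.g.\ Chapter~58 of \cite{Handbook}), so I would cite it rather than reprove it in full. With (a) and (b) in hand, the corollary follows immediately by applying the theorem. Since the paper's preceding text already asserts ``an example of a super-additive depth measure with compact convex depth regions is Tukey depth,'' the intended proof is essentially a one-line appeal to the theorem, and the above verifications are exactly what justifies that assertion.

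\begin{proof}
It was noted before Observation~\ref{obs:upper_bound} and in the surrounding discussion that Tukey depth is super-additive; let us briefly recall why. Adding a point $p$ to $S$ changes the number of points of $S$ in any closed halfspace by at most one, so the same holds for the minimum over all closed halfspaces containing $q$, giving (i) sensitivity. If $q\notin\operatorname{conv}(S)$, a separating hyperplane yields a closed halfspace containing $q$ and no point of $S$, so $\text{TD}(S,q)=0$, giving (ii) locality; and if $q\in\operatorname{conv}(S)$, every closed halfspace containing $q$ must contain at least one point of $S$, giving (iii) non-triviality. Finally, for disjoint $S_1,S_2\subseteq S$ and any closed halfspace $h$ containing $q$, the number of points of $S$ in $h$ is at least the number of points of $S_1$ in $h$ plus the number of points of $S_2$ in $h$; minimizing over $h$ yields $\text{TD}(S,q)\geq\text{TD}(S_1,q)+\text{TD}(S_2,q)$, giving (iv) super-additivity.

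It is classical (see e.g.\ Chapter~58 in \cite{Handbook}) that for $\alpha>0$ the Tukey depth region $D_{\text{TD}}^S(\alpha)$ is compact and convex: its complement is the union of all open halfspaces containing at most $\alpha-1$ points of $S$, hence open, so the region is closed; it is contained in $\operatorname{conv}(S)$ by locality, hence bounded; and it equals the intersection of all closed halfspaces whose complementary open halfspace contains at most $\alpha-1$ points of $S$, hence convex.

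Therefore Tukey depth is a super-additive depth measure whose depth regions are compact and convex, and the previous theorem shows that it is cascading.
\end{proof}
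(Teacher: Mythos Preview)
Your proposal is correct and follows exactly the approach the paper intends: the paper derives the corollary in one line from the preceding theorem, relying on the earlier remarks that Tukey depth is super-additive and that its depth regions are compact and convex, and you simply spell out those verifications explicitly.
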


On the other hand, while Tverberg depth is super-additive, its depth regions are in general not convex; in fact, they are not even connected.
A weak version of Kalai's cascade conjecture claims that the cascade condition holds for the convex hull of Tverberg depth regions.
These depth regions are convex by definition, but the resulting depth measure is in general not super-additive anymore.
So while our approach proves the cascade conjecture for an entire family of depth measures, solving Kalai's cascade conjecture even in its weak form likely requires additional ideas.
As every super-additive depth measure is bounded from below by Tverberg depth, solving the strong version of Kalai's cascade conjecture would imply that all super-additive depth measures are cascading.
Further, as noted by Kalai for Tverberg depth \cite{Kalai_cascade}, any cascading depth measure must enforce deep points.
More precisely, if $\varrho$ is a cascading depth measure and $S$ is a point set in $\mathbb{R}^d$, then there must be a point $q\in\mathbb{R}^d$ for which $\varrho(S,q)\geq\frac{|S|}{d+1}$.
Indeed, if there was no such point, we would have $d_{|S|/(d+1)}=-1$, and even if $d_i=d$ for all $i<\frac{|S|}{d+1}$, the sum $\sum_{i=1}^{|S|}d_i$ would still be negative.
The existence of deep points is the main feature of the next family of depth measures that we study.

\section{A second set of axioms}
\label{sec:axiom2}

The second family of depth measures we consider are \emph{central} depth measures.
A combinatorial depth measure $\varrho: (S^{\mathbb{R}^d},\mathbb{R}^d)\rightarrow \mathbb{R}_{\geq 0}$ is called central if it satisfies the following conditions:
\begin{enumerate}
\item[(i)] for all $S\in S^{\mathbb{R}^d}$ and $q,p\in\mathbb{R}^d$ we have $|\varrho(S,q)-\varrho(S\cup\{p\},q)|\leq 1$ (sensitivity),
\item[(ii)] for all $S\in S^{\mathbb{R}^d}$ and $q\in\mathbb{R}^d$ we have $\varrho(S,q)=0$ for $q\not\in\text{conv}(S)$ (locality),
\item[(iii')] for every $S\in S^{\mathbb{R}^d}$ there is a $q\in\mathbb{R}^d$ for which $\varrho(S,q)\geq\frac{1}{d+1} |S|$ (centrality).
\item[(iv')] for all $S\in S^{\mathbb{R}^d}$ and $q,p\in\mathbb{R}^d$ we have $\varrho(S\cup\{p\},q)\geq \varrho(S,q)$ (monotonicity),
\end{enumerate}

Note that conditions (i) and (ii) are the same as for super-additive depth measures, so by Observation \ref{obs:upper_bound} we have $\varrho(S,q)\leq\text{TD}(S,q)$ for every central depth measure.
Further, for integral depth measures, conditions (iii') and (iv') imply the non-triviality condition (iii) for super-additive depth measures: by the centrality condition (iii') and integrality, every point in the relative interior of a simplex spanned by points of $S$ must have depth at least 1, and thus by the monotonicity condition (iv'), every point in the interior of the convex hull of $S$ has depth at least 1, as by Carath\'{e}odory's theorem each point in the convex hull lies in a simplex.
On the other hand, the super-additivity condition (iv) is stronger than the monotonicity condition (iv'), so at first glance, the families of super-additive depth measures and central depth measures are not comparable.
However, we have seen before that any super-additive depth measure indeed satisfies the centrality condition, so central depth measures are a superset of super-additive depth measures.
It is actually a strict superset, as for example the depth measure whose depth regions are defined as the convex hulls of Tverberg depth regions is central but not super-additive.

While central depth measures enforce deep points by definition, they might still differ a lot locally.
In the following, we will show that for most query points we can bound by how much they differ locally, showing that every central depth measure is a constant factor approximation of Tukey depth.

We say that a query point $q$ is in \emph{general position relative to $S$} if there is no hyperplane containing $q$ and $d$ points of $S$.
Another way to view this is the following: consider again all hyperplanes spanned by $d$ or more points of $S$ and recall that this defines an arrangement $A$ of hyperplanes, whose cells are the connected components of $\mathbb{R}^d\setminus A$. Then a query point $q$ is in general position with respect to $S$ if and only if it lies in a cell of $A$.

\begin{theorem}
\label{thm:central}
Let $\varrho$ be a central depth measure in $\mathbb{R}^d$, $q$ a point in $\mathbb{R}^d$ and $S$ a finite set of points in $\mathbb{R}^d$, where $q$ is in general position with respect to $S$.
Then there exists a constant $c=c(d)$, which depends only on the dimension $d$, such that
$$\text{TD}(S,q)\geq\varrho(S,q)\geq\text{ED}(S,q)-(d+1)\geq c\cdot\text{TD}(S,q)-(d+1).$$
\end{theorem}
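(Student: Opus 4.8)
The plan is to prove the chain of three inequalities separately, two of which are essentially already available. The leftmost inequality $\text{TD}(S,q)\geq\varrho(S,q)$ is exactly Observation~\ref{obs:upper_bound}, since a central depth measure satisfies (i) sensitivity and (ii) locality. The rightmost inequality $\text{ED}(S,q)\geq c\cdot\text{TD}(S,q)$ for some constant $c=c(d)$ is the Fabila-Monroy--Huemer-type bound that the paper promises to reprove in Section~\ref{sec:enclosing}; I would simply cite that forthcoming result (note $q$ being in general position relative to $S$ is the natural hypothesis there). So the entire content of the theorem is the middle inequality
$$\varrho(S,q)\geq\text{ED}(S,q)-(d+1),$$
and this is where I would spend all the effort.

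To prove the middle inequality, set $k:=\text{ED}(S,q)$ and let $S'\subseteq S$ be a subset that $k$-encloses $q$, so $S'=S_1\cup\cdots\cup S_{d+1}$ with each $|S_i|=k$ and every transversal spanning a simplex containing $q$. The key structural fact, stated in the excerpt's discussion of enclosing depth, is that the convex hulls of the $S_i$ are \emph{well-separated}: for each $I\subseteq\{1,\ldots,d+1\}$, the sets $\bigcup_{i\in I}S_i$ and $\bigcup_{i\notin I}S_i$ can be separated by a hyperplane through $q$. I would first prove that this well-separation forces $\text{TD}(S',q)\geq k$: any closed halfspace $H$ with $q$ on its boundary must contain at least one full part $S_i$. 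Indeed, if $H$ omitted at least one point from every $S_i$, pick such a point $p_i\notin H$ from each part; then $q\in\text{conv}(p_1,\ldots,p_{d+1})$, but all $p_i$ lie in the open complementary halfspace, contradicting $q\in\partial H$ (a simplex whose vertices are strictly on one side of a hyperplane cannot contain a point of that hyperplane). Hence $|S'\cap H|\geq k$, so $\text{TD}(S',q)\geq k$.

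Now I would combine this with the centrality and monotonicity axioms. Since $\varrho$ is central, there is some point $q^*$ with $\varrho(S',q^*)\geq\frac{1}{d+1}|S'|=k$. But I need depth at $q$, not at $q^*$ — and here the well-separated structure pays off again: the point that centrality hands us must in fact lie deep in the ``central cell,'' and one can argue $q$ itself already sees depth close to $k$. The cleanest route I see is: by centrality applied to $S'$ there is $q^*$ with $\varrho(S',q^*)\geq k$; by Observation~\ref{obs:upper_bound}, $\text{TD}(S',q^*)\geq k$, so $q^*$ is enclosed deeply too. Then I would show $q$ and $q^*$ lie in convex position so that a short sensitivity argument transfers the bound — specifically, removing the at-most-$(d+1)$ ``extreme'' points (one per part $S_i$, those realizing separating hyperplanes through $q$) from $S'$ leaves $q$ on the boundary of $\text{conv}$, hence depth $0$ there by locality, and by sensitivity $\varrho(S',q)\geq \varrho(S'\setminus\{d+1\text{ points}\},q) - (d+1)$... no, that gives the wrong direction. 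The right argument: $q^*\in\text{conv}(S')$ and in fact in every face's relative interior of the enclosing simplex structure, so by monotonicity $\varrho(S,q^*)\geq\varrho(S',q^*)\geq k$, and then I transfer from $q^*$ to $q$ by noting both lie in the interior of $\text{conv}(\{p_1,\ldots,p_{d+1}\})$ for a common transversal together with a path argument; each hyperplane of the cell arrangement crossed costs $1$ by sensitivity, and the structure bounds the number of crossings by $d+1$.

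The main obstacle — and the step I would flag as delicate — is exactly this last transfer from the centrally-deep point $q^*$ to the given query point $q$: controlling the loss by precisely $d+1$ rather than some larger dimension-dependent quantity. I expect the honest argument is: by the well-separated property, $q$ lies in the interior of the convex hull of $S_i'\cup S_j'$-type subsets for all splits, and one can explicitly write $q$ as lying in a simplex spanned by one representative from each $S_i$; deleting those $d+1$ representatives from $S'$ gives a set still enclosing $q^*$ with one fewer point per part, which by induction on $k$ (or by centrality on the smaller set) gives $\varrho\geq k-1$ there, and sensitivity across those $d+1$ deletions yields $\varrho(S',q)\geq (k-1)-(d+1)+1$; iterating or sharpening the bookkeeping gives the stated $-(d+1)$. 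I would present this induction on $k$ carefully, using that $S_i'$ with the representative deleted still $(k-1)$-encloses $q$ (well-separation is inherited by subsets), which is the cleanest way to avoid ad hoc geometry.
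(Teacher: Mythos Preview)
Your identification of the three inequalities and the reduction to the middle one is correct, as is your observation that $\text{TD}(S',q)\geq k$ for the $k$-enclosing witness $S'$. The gap is in the transfer from the centrality point $q^*$ to the given query point $q$.

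Your proposed transfer mechanisms do not work. The sensitivity axiom bounds $|\varrho(S,q)-\varrho(S\cup\{p\},q)|$, i.e.\ the change when a \emph{data point} is added or removed while $q$ is held fixed; it says nothing whatsoever about how $\varrho(S,\cdot)$ varies as the \emph{query point} moves. So ``each hyperplane of the cell arrangement crossed costs $1$ by sensitivity'' is simply false --- there is no axiom that controls this, and indeed a combinatorial depth measure can jump by an arbitrary amount when $q$ crosses a single hyperplane of the arrangement. Your induction attempt is circular for the same reason: after deleting one transversal you still need $\varrho(\text{smaller set},q)\geq k-1$ \emph{at the point $q$}, which is exactly the problem you started with; centrality only gives you a deep point \emph{somewhere}.

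The paper's idea, which you are missing, is to go the other way: rather than move $q$ toward $q^*$, force $q^*$ to coincide with $q$. One adds $d+1$ auxiliary points inside a tiny ball $B$ around $q$, one in each cone $C_i$, obtaining a set $P$ that $(k{+}1)$-encloses $q$ (Lemma~\ref{lem:cones}). Using the halfspace structure (Lemma~\ref{lem:halfspaces}) one shows that every point \emph{outside} $B$ still has Tukey depth at most $k$ in $P$, while $\alpha|P|=\frac{|P|}{d+1}=k+1>k$. Hence, by Observation~\ref{obs:upper_bound}, the point guaranteed by centrality must lie inside $B$, and since $B$ sits in a single cell of the arrangement and $\varrho$ is combinatorial, $\varrho(P,q)\geq k+1$. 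Now sensitivity applied to the removal of the $d+1$ added points gives $\varrho(S',q)\geq \varrho(P,q)-(d+1)$, and monotonicity finishes with $\varrho(S,q)\geq\varrho(S',q)$. The entire $-(d+1)$ loss comes from removing these auxiliary points, not from any movement of the query.
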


Here the first inequality is just Observation \ref{obs:upper_bound}.
As for the second inequality, we would like to argue that if $S$ $k$-encloses $q$ then $\varrho(S,q)\geq k$.
By centrality, there must indeed be a point $q'$ with $\varrho(S,q')\geq k$ (note that $|S|=k(d+1)$ by definition of $k$-enclosing).
As $\varrho$ is bounded from above by Tukey depth, this point has to lie in the \emph{centerpoint region}, that is, the region $D_{\text{TD}}^S(|S|/(d+1))$ of points of Tukey depth at least $|S|/(d+1)$.
However, the point can lie anywhere in the centerpoint region of $S$ and not every point in the centerpoint region is $k$-enclosed by $S$.
We will show that by adding $d+1$ points very close to $q$, we can ensure that $q$ is the only possible centerpoint in the new point set, and the second inequality then follows from sensitivity and monotonicity after removing these points again.
A formal argument for this follows in Lemma \ref{lem:alpha}.

This argument can be generalized even to a relaxation of central depth measures:
We say that a combinatorial depth measure is $\alpha$-central if it satisfies conditions (i), (ii) and (iv'), and the following weak version of condition (iii'):
for every $S\in S^{\mathbb{R}^d}$ there is a $q\in\mathbb{R}^d$ for which $\varrho(S,q)\geq\alpha |S|$ ($\alpha$-centrality).

\begin{lemma}
\label{lem:alpha}
Let $\alpha>\frac{1}{d+2}$, and let $\varrho$ be an $\alpha$-central depth measure.
Let $q$ be a point in $\mathbb{R}^d$ and $S$ a finite set of points in $\mathbb{R}^d$, where $q$ is in general position with respect to $S$.
Then
$$\varrho(S,q)\geq \left(d+2-\frac{1}{\alpha}\right)\cdot\text{ED}(S,q)-(d+1).$$
In particular, if $\alpha=\frac{1}{d+1}$, then $\varrho(S,q)\geq \text{ED}(S,q)-(d+1)$.
\end{lemma}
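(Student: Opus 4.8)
The plan is to enlarge the point set near $q$ so that $q$ becomes the unique deepest point, apply the $\alpha$-centrality axiom to the enlarged set, and transfer the resulting bound back to $S$ via sensitivity and monotonicity. First note that for $\alpha>\tfrac1{d+1}$ no $\alpha$-central depth measure exists: a set consisting of $d+1$ small clusters near the vertices of a simplex has no point of Tukey depth exceeding $\tfrac{|S|}{d+1}$, while $\varrho\le\text{TD}$ by Observation~\ref{obs:upper_bound}; hence the statement is vacuous there and we may assume $\tfrac1{d+2}<\alpha\le\tfrac1{d+1}$.

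Write $k:=\text{ED}(S,q)$ and fix $S'\subseteq S$ with $|S'|=k(d+1)$ that $k$-encloses $q$ via $S_1,\dots,S_{d+1}$. Since $q$ is in general position with respect to $S$, it lies in an open cell of the arrangement $\mathcal{A}(S')$ of hyperplanes spanned by $S'$, and this cell is contained in the interior of $\text{conv}(S')$. Inside that cell I would place a point set $C$, close to $q$ and in general position, consisting of $d+1$ equal clusters of size $t$ whose centres form a small simplex with $q$ in its interior, the $i$-th cluster lying on the $S_i$-side of a hyperplane through $q$ separating $\text{conv}(S_i)$ from $\text{conv}\bigl(\bigcup_{j\ne i}S_j\bigr)$ (such separators exist because an enclosing configuration is well-separated). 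Set $S'':=S'\cup C$, so $|S''|=(d+1)(k+t)$, and let $m:=(d+1)t$.

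The crux is to show that, for a suitable scale of $C$ and suitable $t$, the cell of $\mathcal{A}(S'')$ containing $q$ is the unique cell of Tukey depth at least $\alpha|S''|$. In one direction, $\text{TD}(S'',q)\ge k+t\ge\alpha|S''|$: every closed halfspace through $q$ contains at least $k$ points of $S'$ (by Observation~\ref{obs:upper_bound}, since $\text{ED}(S',q)\ge k$) and at least $t$ points of $C$ (since $q$ is interior to the simplex spanned by the clusters of $C$), and $k+t\ge\alpha(d+1)(k+t)$ holds precisely because $\alpha\le\tfrac1{d+1}$. In the other direction, one shows $\text{TD}(S'',x)<\alpha|S''|$ for every $x$ outside $q$'s cell; the model situation is when $S'$ consists of clusters at the vertices $v_1,\dots,v_{d+1}$ of a simplex $\Delta\ni q$, where for $x\neq q$ some barycentric coordinate satisfies $\lambda_i(x)>\lambda_i(q)$ (both tuples summing to $1$), so the halfspace $\{\lambda_i\ge\lambda_i(x)\}$ contains only the cluster at $v_i$ and, if $C$ is small enough, none of $C$, giving $\text{TD}(S'',x)\le k$, which is below $\alpha|S''|$ once $t>\tfrac{k(1-(d+1)\alpha)}{(d+1)\alpha}$. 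A general enclosing configuration is handled in the same way, using the separating hyperplanes of the enclosure in place of the barycentric level sets and the facts that $C$ is concentrated near $q$ and that $\text{TD}(S',\cdot)<\text{TD}(S',q)$ off the deepest region. Granting this, the point $q^\ast$ supplied by $\alpha$-centrality has $\text{TD}(S'',q^\ast)\ge\varrho(S'',q^\ast)\ge\alpha|S''|$, hence lies in $q$'s cell, so $\varrho(S'',q)=\varrho(S'',q^\ast)\ge\alpha|S''|$ because $\varrho$ is combinatorial.

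Finally, removing the $m$ points of $C$ one at a time and using sensitivity gives $\varrho(S',q)\ge\alpha|S''|-m$, and monotonicity gives $\varrho(S,q)\ge\varrho(S',q)\ge\alpha k(d+1)-(1-\alpha)m$. Choosing $m$ to be the least multiple of $d+1$ exceeding $k\bigl(\tfrac1\alpha-(d+1)\bigr)$ — a nonnegative quantity smaller than $k$, by $\tfrac1{d+2}<\alpha\le\tfrac1{d+1}$ — we write $m=k\bigl(\tfrac1\alpha-(d+1)\bigr)+\rho$ with $0<\rho\le d+1$, and then
\[
\alpha k(d+1)-(1-\alpha)m=\Bigl(d+2-\tfrac1\alpha\Bigr)k-(1-\alpha)\rho>\Bigl(d+2-\tfrac1\alpha\Bigr)k-(d+1),
\]
which is the asserted inequality; for $\alpha=\tfrac1{d+1}$ it reads $\varrho(S,q)\ge\text{ED}(S,q)-d$. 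The main obstacle is the second half of the crux: bounding $\text{TD}(S'',x)$ from above for $x$ near $q$ but outside $q$'s cell, and for enclosing configurations whose own deepest Tukey region is not a single cell. Both are handled by exploiting the general-position hypothesis on $q$ and the freedom to take $C$ arbitrarily small and to orient its clusters relative to $S_1,\dots,S_{d+1}$.
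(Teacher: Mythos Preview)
Your overall plan---pass to a $k$-enclosing witness $S'$, add $(d+1)t$ points near $q$ so that $q$ becomes the unique candidate for an $\alpha$-deep point, invoke $\alpha$-centrality, then strip the added points via sensitivity and return to $S$ via monotonicity---is exactly the paper's strategy, and your final arithmetic is correct (in fact slightly sharper: for $\alpha=\tfrac1{d+1}$ you recover $\varrho(S,q)\ge k-d$, which the paper's calculation also yields before weakening to $-(d+1)$).

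The place where your argument is genuinely incomplete is the step you yourself flag as ``the main obstacle'': showing that every point $x$ outside $q$'s cell has $\text{TD}(S'',x)\le k$. Your barycentric-coordinate argument is specific to the model case where the $S_i$ are clusters at simplex vertices; for a general enclosing configuration there is no reason the well-separation hyperplanes alone give what you need, and the sentence ``$\text{TD}(S',\cdot)<\text{TD}(S',q)$ off the deepest region'' is not true (every point of $\mathbb{R}^d$ has Tukey depth at most $k$ with respect to $S'$, so the maximum can be attained on a region larger than a single cell). What the paper does here is prove a dedicated lemma (its Lemma~\ref{lem:halfspaces}): for any $S_1,\dots,S_{d+1}$ enclosing $q$ in general position there exist closed halfspaces $H_1,\dots,H_{d+1}$, each with $q$ on its boundary, with $H_i\cap S'=S_i$ and $\bigcup_i H_i=\mathbb{R}^d$. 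Then any $x$ lies in some $H_i$, and if $x$ is outside $q$'s cell $Z\supset B$ one translates $H_i$ off $B$ so that it still contains $x$ and meets $S''$ in exactly $S_i$, giving $\text{TD}(S'',x)\le k$. That lemma (proved via well-separation and a generalized Ham--Sandwich cut for the family $Q_i=(S_j)_{j\ne i}$) is the missing ingredient; your separating hyperplanes are the $H_i$, but the two nontrivial facts---that they can be taken through $q$ with $H_i\cap S'=S_i$ \emph{and} that they cover $\mathbb{R}^d$---need to be argued.

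A second small point: you place the $i$-th cluster ``on the $S_i$-side'' of a separating hyperplane; the paper places the added points in the cone $C_i$ generated by $S_i$ from $q$, and uses a short lemma (its Lemma~\ref{lem:cones}) to verify that the enlarged family still encloses $q$. Your placement is morally the same, but the cone description is what makes both the preserved enclosure and the translation argument above go through cleanly.
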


Before proving Lemma \ref{lem:alpha}, let us state and prove some facts about enclosing sets that will be helpful in the upcoming proof.
For the first fact, recall that the convex hulls of the $S_i$ are \emph{well-separated} if for each $I\subseteq \{1,\ldots,d+1\}$ the convex hulls of $\bigcup_{i\in I}S_i$ and $\bigcup_{i\notin I}S_i$ can be separated by a hyperplane.
This is known to be equivalent to saying that there is no hyperplane that intersects all convex hulls, see e.g.\ Chapter 4.2 in \cite{Handbook}.

\begin{lemma}
\label{lem:wellseparated}
Let $S_1,\ldots, S_{d+1}$ be point sets in $\mathbb{R}^d$ which enclose a point $q$, where $q$ is in general position with respect to $S=S_1\cup\ldots\cup S_{d+1}$.
Then the convex hulls of $S_1,\ldots, S_{d+1}$ are well-separated.
\end{lemma}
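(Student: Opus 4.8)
The plan is to invoke the characterization recalled just above the statement: the convex hulls of $S_1,\ldots,S_{d+1}$ are well-separated if and only if no hyperplane meets all of $\text{conv}(S_1),\ldots,\text{conv}(S_{d+1})$ simultaneously. So I would suppose, for contradiction, that some hyperplane $h$ intersects $\text{conv}(S_i)$ for every $i$; write $h=\{\phi=0\}$ for a non-zero affine functional $\phi$, with closed half-spaces $h^+=\{\phi\ge 0\}$ and $h^-=\{\phi\le 0\}$. Since $\phi$ attains the value $0$ somewhere on $\text{conv}(S_i)$, and the maximum and minimum of $\phi$ over $\text{conv}(S_i)$ are attained at points of $S_i$, there is a point $b_i\in S_i$ with $\phi(b_i)\ge 0$ and a point $a_i\in S_i$ with $\phi(a_i)\le 0$. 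Applying the enclosing hypothesis to the transversals $(b_1,\ldots,b_{d+1})$ and $(a_1,\ldots,a_{d+1})$ gives $q\in\text{conv}(b_1,\ldots,b_{d+1})$ and $q\in\text{conv}(a_1,\ldots,a_{d+1})$.

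The key step, and the only place general position is used, is to upgrade this to the statement that $q$ lies in the \emph{interior} of each of these two simplices. First, $b_1,\ldots,b_{d+1}$ must be affinely independent: otherwise they span a hyperplane $g$, and since $q\in\text{conv}(b_1,\ldots,b_{d+1})\subseteq g$, this $g$ would contain $q$ together with $d+1$ points of $S$, contradicting that $q$ is in general position relative to $S$. Hence $\text{conv}(b_1,\ldots,b_{d+1})$ is a full-dimensional simplex. Second, if $q$ were on its boundary it would lie on a facet, i.e.\ in the convex hull of some $d$ of the $b_i$, whose affine span is a hyperplane through $q$ and $d$ points of $S$ -- again impossible. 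So $q$ is in the interior of $\text{conv}(b_1,\ldots,b_{d+1})$, and by the same reasoning in the interior of $\text{conv}(a_1,\ldots,a_{d+1})$.

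To finish, observe that $\phi(b_i)\ge 0$ for all $i$ forces $\text{conv}(b_1,\ldots,b_{d+1})\subseteq h^+$ by convexity; since this simplex is full-dimensional, every interior point has a whole neighborhood contained in $h^+$ and therefore satisfies $\phi>0$ strictly. Thus $\phi(q)>0$. The symmetric argument applied to the $a_i$ and $h^-$ gives $\phi(q)<0$, a contradiction. Hence no hyperplane meets all of $\text{conv}(S_1),\ldots,\text{conv}(S_{d+1})$, which by the cited characterization is exactly the assertion that these convex hulls are well-separated. The crux of the argument is the middle step -- using general position to push $q$ into the \emph{open} interior of each transversal simplex rather than merely its closure -- since that is precisely what converts the weak inequalities $\phi(b_i)\ge 0$ into the strict conclusion $\phi(q)>0$; everything else is routine.
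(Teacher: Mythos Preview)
Your argument is correct and follows essentially the paper's approach: use the characterization that well-separation is equivalent to no hyperplane meeting all $\text{conv}(S_i)$, assume such an $h$ exists, and pick a transversal in a closed half-space of $h$ to contradict the enclosing hypothesis via general position. The only difference is organizational---the paper splits into the cases $q\in h$ and $q\notin h$, whereas you avoid the case split by taking transversals on \emph{both} sides and using general position to force $q$ into the open interior of each simplex, yielding $\phi(q)>0$ and $\phi(q)<0$ simultaneously (one small wording fix: if the $b_i$ are affinely dependent they \emph{lie in} some hyperplane, not necessarily \emph{span} one, but the contradiction with general position goes through unchanged).
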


\begin{proof}
By the above remark it is enough to show that there is no hyperplane that intersects all the convex hulls $\text{conv}(S_1),\ldots,\text{conv}(S_{d+1})$.
Assume for the sake of contradiction that there is such a hyperplane $H$.
Assume first that $H$ contains $q$.
By the general position assumption, $H$ can contain at most $m\leq d-1$ points of $S$.
In particular the convex hull of $S\cap H$ lies in an affine subspace of dimension at most $m-1\leq d-2$.
If $q$ lies in the convex hull of $S\cap H$, then $S\cap H$ together with any $d-m$ other points of $S$ spans a hyperplane containing $d$ points of $S$ and $q$, which is excluded by the general position assumption.
Thus, $q$ does not lie in the convex hull of $S\cap H$.
Let now $H^+$ denote the closed positive side of $H$.
As $H$ intersects all convex hulls, we have $S_i\cap H^+\neq \emptyset$ for all $i\in\{1,\ldots,d+1\}$.
For each $S_i$ pick some point $s_i\in S_i$.
As all of these points are on the same side of $H$, the point $q$ can only be in the convex hull of the transversal $s_1,\ldots,s_{d+1}$ if it is in the convex hull of the points of the transversal lying on $H$.
However, as $q$ is not in the convex hull of $S\cap H$, we conclude that $q$ is not in the chosen transversal, which is a contradiction to the assumption that $S=(S_1,\ldots,S_{d+1})$ encloses $q$.

Assume now that $H$ does not contain $q$.
Then, analogously, one of the closed sides of $H$ does not contain $q$ but contains a transversal $s_i\in S_i$ which hence does not contain $q$ in its convex hull, which is a contradiction to the fact that $S_1,\ldots, S_{d+1}$ encloses $q$.
\end{proof}

\begin{lemma}
\label{lem:halfspaces}
Let $S_1,\ldots, S_{d+1}$ be point sets in $\mathbb{R}^d$ which enclose a point $q$, where $q$ is in general position with respect to $S=S_1\cup\ldots\cup S_{d+1}$.
Then there are $d+1$ closed halfspaces $H_1,\ldots,H_{d+1}$ such that each $H_i$ contains $q$ on its boundary, $H_i\cap S=S_i$ for each $i$ and $H_1\cup\ldots \cup H_{d+1}=\mathbb{R}^d$.
\end{lemma}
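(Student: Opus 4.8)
The plan is to place $q$ at the origin and to exhibit vectors $n_1,\dots,n_{d+1}$ with the following three properties: (a)~$\langle p-q,n_i\rangle<0$ for every $p\in S_i$; (b)~$\langle p-q,n_i\rangle>0$ for every $p\in S\setminus S_i$; and (c)~$\sum_{i=1}^{d+1}\lambda_i n_i=0$ for some $\lambda_i\ge 0$ not all zero. Given such vectors, set $H_i:=\{x\in\mathbb{R}^d:\langle x-q,n_i\rangle\le 0\}$. Then $q\in\partial H_i$ by construction; properties (a) and (b) force $H_i\cap S=S_i$, since every point of $S$ lies strictly off the bounding hyperplane of $H_i$ on the correct side; and property (c) forces $\bigcup_i H_i=\mathbb{R}^d$, because a point $x$ outside every $H_i$ would satisfy $\langle x-q,n_i\rangle>0$ for all $i$, whence $0=\langle x-q,\sum_i\lambda_i n_i\rangle>0$, a contradiction. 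Note that (a) and (b) together say exactly that the hyperplane through $q$ with normal $n_i$ strictly separates $S_i$ from $S\setminus S_i$, so the lemma reduces to finding such separating hyperplanes \emph{through $q$} whose normals are positively dependent.

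That the separating hyperplane can be taken through $q$ is where the enclosing hypothesis enters. By Lemma~\ref{lem:wellseparated} the convex hulls of $S_1,\dots,S_{d+1}$ are well-separated, so in particular $\text{conv}(S_i)$ and $\text{conv}(S\setminus S_i)$ are disjoint; by convex duality (Gordan/Farkas), a separating hyperplane through $q$ exists as soon as $0\notin\text{conv}(\{p-q:p\in S_i\}\cup\{q-p:p\in S\setminus S_i\})$. To get this I would first prove that $q\notin\text{conv}\big(\bigcup_{j\in I}S_j\big)$ for every proper nonempty $I\subseteq\{1,\dots,d+1\}$. The case $I=\{i\}$ is clean: if $q\in\text{conv}(S_i)$, then by Carath\'{e}odory and general position there is a full-dimensional simplex $\text{conv}(t_0,\dots,t_d)$ with all $t_m\in S_i$ and $q$ in its interior; fixing any transversal $(v_1,\dots,v_{d+1})$ of $S_1,\dots,S_{d+1}$ and replacing $v_i$ by $t_m$ gives another transversal, so $q$ lies in the interior of $\text{conv}(\{t_m\}\cup\{v_j:j\ne i\})$; writing $h$ for the signed distance to the hyperplane $\text{aff}(\{v_j:j\ne i\})$, this forces $0<h(q)<h(t_m)$ for every $m$, impossible since $h(q)$ is a convex combination of the $h(t_m)$. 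The other cases, and the passage from this to the duality condition above, I would handle by similar transversal-swapping arguments together with the elementary observation that for \emph{any} choice of points $a_k\in\text{conv}(S_k)$ one still has $q\in\text{conv}(a_1,\dots,a_{d+1})$ --- proved by induction on the number of non-vertex $a_k$, using that $\{x:q\in\text{conv}(\{a_1,\dots,a_d\}\cup\{x\})\}$ is convex and contains every vertex of $S_{d+1}$.

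For the positive dependence (c) I would choose the normals canonically: let $a_i$ be the point of $\text{conv}(S_i)$ nearest to $q$ --- well-defined and distinct from $q$ by the previous step --- and set $n_i:=q-a_i$. The variational inequality for nearest-point projections gives $\langle p-q,n_i\rangle\le -\|q-a_i\|^2<0$ for all $p\in S_i$, so (a) holds; and applying the observation above to the selection $(a_1,\dots,a_{d+1})$ yields $q=\sum_i\mu_i a_i$ with $\mu_i\ge 0$, $\sum_i\mu_i=1$, whence $\sum_i\mu_i n_i=\sum_i\mu_i(q-a_i)=q-q=0$, so (c) holds. What remains is property (b) for these particular normals: no point of $S\setminus S_i$ may lie on the $\text{conv}(S_i)$-side of the hyperplane through $q$ normal to $q-a_i$. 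Equivalently, writing $K_i:=\text{cone}(\{p-q:p\in S_i\}\cup\{q-p:p\in S\setminus S_i\})$ for the polar cone of the set of admissible normals of $H_i$, one must show $\bigcap_i K_i=\{0\}$. Here one uses that each $K_i$ is pointed (its polar is full-dimensional since an admissible normal exists, by the second step), together with the fact that $S_i\subseteq S\setminus S_k$ for $k\ne i$: a nonzero $z$ lying in all $K_i$ and in $\text{cone}(\{p-q:p\in S_i\})$ would force $-z\in K_k$, contradicting the pointedness of $K_k$.

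The step I expect to be the main obstacle is precisely this last one --- ruling out, in the same spirit, every remaining way in which a nonzero vector could lie in $\bigcap_i K_i$, i.e.\ making the ``separating hyperplane through $q$'' and the ``positively dependent normals'' requirements hold \emph{simultaneously} for all $i$. The nearest-point choice makes the positive dependence automatic but does not obviously yield the exact equality $H_i\cap S=S_i$, while a choice tailored to the exact separation does not obviously positively span; reconciling the two is exactly what the enclosing hypothesis (via the auxiliary observation and the pointedness of the cones $K_i$) has to deliver, and it is the part of the argument that needs the most care.
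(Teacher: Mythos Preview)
Your reduction to normals $n_1,\dots,n_{d+1}$ with properties (a), (b), (c) is clean, and the auxiliary observation that $q\in\text{conv}(a_1,\dots,a_{d+1})$ for every choice $a_k\in\text{conv}(S_k)$ is correct. The gap is that the nearest-point choice $n_i=q-a_i$ does \emph{not} satisfy (b) in general, and this is a failure of the construction, not a detail to be filled in. Take $d=2$, $q=0$, and singletons $S_1=\{(10,0)\}$, $S_2=\{(\tfrac{1}{10},10)\}$, $S_3=\{(-1,-1)\}$; the barycentric coordinates of the origin in this triangle are all positive, and $q$ is in general position relative to $S$. Here $a_1=(10,0)$, so $n_1=(-10,0)$ and $H_1=\{x:x_1\ge 0\}$, but the point $(\tfrac{1}{10},10)\in S_2$ has first coordinate $\tfrac{1}{10}>0$ and hence lies in $H_1$, violating $H_1\cap S=S_1$. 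The variational inequality for the nearest point only constrains points of $\text{conv}(S_i)$; it says nothing about where $S\setminus S_i$ sits relative to the hyperplane through $q$ with normal $q-a_i$.

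Your fallback via the cones $K_i$ does not repair this: even granting $\bigcap_iK_i=\{0\}$, that statement neither forces your particular $n_i$ into the set of admissible normals for $H_i$, nor exhibits any other admissible choice that is positively dependent. The two requirements --- a strict separating hyperplane through $q$ for each $i$, and positive dependence of the resulting normals --- are genuinely coupled, and your argument does not supply the coupling. The paper proceeds quite differently: it invokes a generalized Ham--Sandwich theorem for well-separated families to find, for each $i$, a hyperplane $L_i$ simultaneously \emph{tangent} to every $S_j$ with $j\neq i$ and having $S_i$ on its positive side; the enclosing hypothesis then forces $q$ into the interior of $L_i^+$; since $\bigcap_iL_i^+\subseteq\text{conv}(S)$ is bounded and nonempty, the $d+1$ halfspaces already cover $\mathbb{R}^d$; finally each $L_i$ is translated to pass through $q$. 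The tangency to the other $S_j$'s is precisely what guarantees the separation survives this translation --- it is the coupling device your approach is missing.
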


\begin{proof}
By Lemma \ref{lem:wellseparated}, the convex hulls of $S_1,\ldots, S_{d+1}$ are well-separated.
Consider the family $Q_j:=(S_1,\ldots,S_{j-1},S_{j+1},\ldots,S_{d+1})$.
Clearly, $Q_j$ is also well-separated: a hyperplane separating the convex hulls of $\bigcup_{i\in I}S_i$ and $\bigcup_{i\notin I}S_i$ for $I\subseteq \{1,\ldots,d+1\}$ also separates the convex hulls of $\bigcup_{i\in I\setminus\{j\}}S_i$ and $\bigcup_{i\notin I\setminus\{j\}}S_i$.
A generalized version of the Ham-Sandwich theorem states that that for $d$ well-separated point sets $P_1,\ldots,P_d$ for any $(\alpha_1,\ldots, \alpha_d)$, where $0\leq\alpha_i\leq |P_i|$, there is a hyperplane $L$ for whose positive side $L^+$ we have $|L^+\cap P_i|=\alpha_i$ for each $i$ \cite{AlphaHSMasses, AlphaHSPoints}.
We may further assume that $L$ is tangent to each $P_i$, that is, $L$ contains a point of each $P_i$.

Consider now the family $Q_i$ and choose the orientation of hyperplanes intersecting the convex hulls of $Q_i$ in such a way that their positive sides contain $S_i$.
By the generalized Ham-Sandwich theorem there is thus a hyperplane $L_i$ tangent to the sets in $Q_i$ for which $S_i$ lies in the positive side $L_i^+$ and all sets of $Q_i$ lie on the negative side.
We note that $q$ lies in the interior of $L_i^+$: by the general position assumption, it cannot lie in $L_i$, and if it were on the negative side, then the tangent points together with any point from $S_i$ would be a transversal which does not contain $q$ in its convex hull.
Further, note that $L_1^+\cap\ldots\cap L_{d+1}^+\subseteq\text{conv}(S)$ and is thus a bounded convex region, which implies that $L_1^+\cup\ldots\cup L_{d+1}^+=\mathbb{R}^d$.
The result now follows from translating each $L_i$ so that it contains $q$.
\end{proof}

For $S_1,\ldots,S_{d+1}$ enclosing a point $q$, we define for each $S_i$ the cone $C_i$ with apex $q$ by taking positive combinations of vectors $\overrightarrow{qs}$ for $s\in S_i$.

\begin{lemma}
\label{lem:cones}
Let $S_1,\ldots, S_{d+1}$ be point sets in $\mathbb{R}^d$ which enclose a point $q$, where $q$ is in general position with respect to $S=S_1\cup\ldots\cup S_{d+1}$.
Let $p$ be any point in the cone $C_i$.
Then $S_1,\ldots,S_i\cup\{p\},\ldots, S_{d+1}$ also encloses $q$.
\end{lemma}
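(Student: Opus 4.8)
The plan is to show that adding a point $p$ from the cone $C_i$ to $S_i$ does not destroy the enclosing property, i.e. that every transversal through the enlarged family still captures $q$ in its convex hull. The only new transversals to check are those that use $p$ as the representative of $S_i$: a transversal $p_1 \in S_1, \ldots, p_{i-1}\in S_{i-1}, p, p_{i+1}\in S_{i+1},\ldots,p_{d+1}\in S_{d+1}$. So it suffices to prove that for any such choice, $q \in \mathrm{conv}(p_1,\ldots,p_{i-1},p,p_{i+1},\ldots,p_{d+1})$.

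First I would use Lemma~\ref{lem:halfspaces} to obtain closed halfspaces $H_1,\ldots,H_{d+1}$, each with $q$ on its boundary, with $H_j \cap S = S_j$, and with $H_1 \cup \cdots \cup H_{d+1} = \mathbb{R}^d$. Write $H_j = \{x : \langle n_j, x-q\rangle \ge 0\}$ for an outward description with apex $q$. The key geometric observation is that since $p \in C_i$, the vector $\overrightarrow{qp}$ is a positive combination of vectors $\overrightarrow{qs}$ with $s \in S_i \subseteq H_i$; since $H_i$ is a halfspace with $q$ on its boundary (a cone itself), it follows that $p \in H_i$, and more usefully that $\langle n_j, \overrightarrow{qp}\rangle$ has a sign controlled the same way the $S_i$ points are: for the transversal, the point $p$ behaves like a point of $S_i$ with respect to the separating structure. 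Concretely, I would argue that replacing any genuine transversal point $s_i \in S_i$ by $p$ keeps $q$ in the convex hull by a Carathéodory/Radon-type continuity or cone argument: the condition "$q \in \mathrm{conv}(p_1,\ldots,p_{d+1})$" for a transversal is equivalent to "$q$ is a positive combination of the $\overrightarrow{q p_j}$" (since $q$ is in the interior by general position, all coefficients are positive and, after normalizing, this is a convex combination), and since $\overrightarrow{qp}$ lies in the cone generated by $\{\overrightarrow{qs} : s\in S_i\}$, one can substitute.

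The cleanest route is: fix the transversal $p_1,\ldots,p_{i-1},p_{i+1},\ldots,p_{d+1}$ on all coordinates except the $i$-th. Since $S_1,\ldots,S_{d+1}$ encloses $q$, for \emph{every} $s \in S_i$ we have $q \in \mathrm{conv}(p_1,\ldots,p_{i-1},s,p_{i+1},\ldots,p_{d+1})$. By the vectors-from-$q$ reformulation, for each such $s$ there are positive coefficients $\lambda^{(s)}_1,\ldots,\lambda^{(s)}_{d+1} > 0$ with $\sum_j \lambda^{(s)}_j \overrightarrow{q p_j^{(s)}} = 0$ where $p_i^{(s)} = s$; equivalently, $\overrightarrow{qs}$ lies in the (open) cone $-\mathrm{cone}(\overrightarrow{qp_1},\ldots,\widehat{i},\ldots,\overrightarrow{qp_{d+1}})$ generated by negatives of the other transversal directions. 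This holds for every $s\in S_i$, hence it holds for every positive combination of the $\overrightarrow{qs}$, i.e. for every element of $C_i$, and in particular for $\overrightarrow{qp}$. Running the reformulation backwards gives $q \in \mathrm{conv}(p_1,\ldots,p_{i-1},p,p_{i+1},\ldots,p_{d+1})$, which is exactly what we needed. The new family partitions $S \cup \{p\}$ into $d+1$ parts and every transversal captures $q$, so it encloses $q$.

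The main obstacle is the bookkeeping around general position and the boundary cases: one must verify that $q$ genuinely lies in the \emph{interior} of each transversal simplex (so that the coefficients are strictly positive and the cone statements are about open cones, which makes "true for all $s$" transfer to "true for all positive combinations"), and that this interiority is preserved when $p$ is on the boundary of $C_i$ or when $p$ is far away. General position of $q$ with respect to $S$ handles the original transversals, but $p$ is a new point, so I would either (a) restrict attention to the direction $\overrightarrow{qp}$ only — the argument above only ever uses this direction, not $p$'s distance, and a direction in the open cone $\mathrm{int}(C_i)$ suffices, with the boundary of $C_i$ handled by a limiting argument using that $\mathrm{conv}$ is closed — or (b) invoke Lemma~\ref{lem:wellseparated} to note the well-separated structure is robust. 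I expect (a) to be the right level of detail and the limiting argument for $p \in \partial C_i$ to be the only genuinely fiddly point.
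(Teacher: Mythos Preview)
Your argument is correct and takes a genuinely different route from the paper. The paper argues by contradiction via separation: it first observes (as a cone analogue of Lemma~\ref{lem:wellseparated}) that no halfspace with $q$ on its boundary can meet all of $C_1\setminus\{q\},\ldots,C_{d+1}\setminus\{q\}$; then, if some transversal using $p$ failed to contain $q$, a separating hyperplane translated to $q$ would produce exactly such a halfspace. Your approach is instead a direct linear-algebraic substitution: fixing the other $d$ transversal points, the condition ``$q\in\mathrm{conv}(p_1,\ldots,s,\ldots,p_{d+1})$'' is equivalent (using general position only to force the $i$-th coefficient positive) to $\overrightarrow{qs}$ lying in the convex cone $K=-\mathrm{cone}(\overrightarrow{qp_j}:j\neq i)$; since $K$ is closed under nonnegative combinations and contains every $\overrightarrow{qs}$ with $s\in S_i$, it contains all of $C_i-q$ and in particular $\overrightarrow{qp}$. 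Your route is more elementary and self-contained (no separation theorem, no appeal to the well-separated structure), while the paper's route reuses the geometric machinery already built up in Lemmas~\ref{lem:wellseparated} and~\ref{lem:halfspaces}. One remark: your worries about $p\in\partial C_i$ and about general position for the new point $p$ are unnecessary---the cone $K$ is automatically closed under \emph{nonnegative} combinations, so no limiting argument is needed, and general position is invoked only in the forward direction where all points lie in $S$.
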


\begin{proof}
We claim that there is no halfspace $H^+$ with $q$ on its boundary which intersects all of $C_1\setminus\{q\},\ldots,C_{d+1}\setminus\{q\}$.
This follows analogously to Lemma \ref{lem:wellseparated}: if such a halfspace exists, then it contains a transversal which does not contain $q$ in its convex hull.
To show that $S_1,\ldots,S_i\cup\{p\},\ldots, S_{d+1}$ encloses $q$, assume for the sake of contradiction that there is a transversal $s_j\in S_j$ which does not contain $q$ in its convex hull.
As both the convex hull and $q$ are convex and compact, by the separation theorem there exists a hyperplane $H$ separating the two.
Translate the separating hyperplane $H$ such that it contains $q$ and orient it in such a way that its positive side $H^+$ contains the convex hull of the transversal.
Then $H^+$ intersects all of $C_1\setminus\{q\},\ldots,C_{d+1}\setminus\{q\}$, which is a contradiction.
\end{proof}

We are now ready to prove Lemma \ref{lem:alpha}:

\begin{proof}
Let $\text{ED}(S,q)=k$ and let $S'=(S'_1,\ldots,S'_{d+1})$ be a witness subset.
Recall that by monotonicity, we have $\varrho(S,q)\geq\varrho(S',q)$.
We first claim that $\text{TD}(S',q')\leq k$ for all $q'\in\mathbb{R}^{d}$: indeed, by Lemma \ref{lem:halfspaces}, one of the halfspaces $H_1,\ldots,H_{d+1}$ contains $q'$ and exactly one of the $S'_i$.
On the other hand, as any transversal of $S'_1,\ldots,S'_{d+1}$ gives a simplex having the point $q$ in its interior, we get that $\text{TvD}(S',q)=k$, and thus, as $\text{TD}(S',q)\geq\text{TvD}(S',q)$ and $\text{TD}(S',q)\leq k$ we get that $\text{TD}(S',q)= k$.

Let $\alpha':=(d+1)\alpha$ and let $m:=\lfloor\frac{1-\alpha'}{\alpha'}k+1\rfloor$.
Let $Z$ be the cell of the hyperplane arrangement induced by $S'$ which contains $q$ and let $B$ be a small enough neighborhood of $q$ such that $B$ lies completely in the interior of $Z$.
For each $i$ add $m$ points in $C_i\cap B$.
By Lemma \ref{lem:cones}, this new point set $P$ $(k+m)$-encloses $q$.
The new point set $P$ has $(d+1)(k+m)$ many points, and we have
$$\alpha|P|=\alpha' (k+m)> \alpha' \left(k+\frac{1-\alpha'}{\alpha'}k\right)=\alpha' k+(1-\alpha' )k=k.$$

We claim that for any point $q'$ that is not in $Z$ we still have $\text{TD}(P,q')\leq k$: by Lemma \ref{lem:halfspaces}, the point $q'$ lies in one of the halfspaces $H_1,\ldots,H_{d+1}$, without loss of generality $H_1$. Translate $H_1$ until it does not intersect $B$ anymore. As $B$ was chosen sufficiently small, $H_1$ still contains $q'$ and we have $|P\cap H_1|=|S'_1|=k$ by construction.

Thus, as $\varrho(P,q')\leq\text{TD}(P,q')$, the only points $q'$ for which $\varrho(P,q')\geq\alpha|P|$ is possible are by construction in $B$.
As $B\subset Z$ and all points in $Z$ had the same depth before adding the new points, we can assume that we have $\varrho(P,q)\geq\alpha|P|$.
By sensitivity we now have

\begin{align}
\varrho(S',q)\geq\varrho(P,q)-(d+1)m \\
\geq \alpha' (k+m)-(d+1)m \\
= \alpha' k-(d+1-\alpha')m \\
\geq \alpha' k-(d+1-\alpha')\left(\frac{1-\alpha'}{\alpha'}k+1\right) \\
= \alpha' k-\frac{(d+1-\alpha')(1-\alpha')}{\alpha'}k-(d+1)+\alpha' \\
\geq (\alpha'^2-(d+1)+\alpha'+(d+1)\alpha'-\alpha'^2)\frac{k}{\alpha'}-(d+1) \\
= \frac{(d+2)\alpha'-(d+1)}{\alpha'}k-(d+1).
\end{align}

Plugging in $\alpha':=(d+1)\alpha$ we get
$$\varrho(S,q)\geq\frac{(d+2)(d+1)\alpha-(d+1)}{(d+1)\alpha}k-(d+1)=\left(d+2-\frac{1}{\alpha}\right)k-(d+1).$$
As $(d+2-\frac{1}{\alpha})>0$ for $\alpha>\frac{1}{d+2}$ and $(d+2-\frac{1}{\alpha})=1 $, for $\alpha=\frac{1}{d+1}$ the claim follows.
\end{proof}

The most involved part of Theorem \ref{thm:central} is the last inequality, which we will prove in the next section.

\section{A lower bound for enclosing depth}
\label{sec:enclosing}

In this section, we will give a new proof of a lower bound on the enclosing depth in terms of Tukey depth:

\begin{theorem}[$E(d)$]
There is a constant $c_1=c_1(d)$ such that for all $S\in S^{\mathbb{R}^d}$ and $q\in\mathbb{R}^d$ we have $\text{ED}(S,q)\geq c_1\cdot\text{TD}(S,q)$.
\end{theorem}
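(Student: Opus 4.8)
The plan is to prove the statement $E(d)$ by induction on the dimension $d$, using the known one-dimensional case (where enclosing depth equals Tukey depth equals the standard depth, so $c_1(1)=1$) as the base case. The guiding idea is that Tukey depth $\text{TD}(S,q)=k$ means every closed halfspace through $q$ contains at least $k$ points of $S$; we want to extract from $S$ a subset that $k'$-encloses $q$ for some $k'\geq c_1(d)\cdot k$, i.e.\ we want to find $d+1$ well-separated clusters $S_1,\dots,S_{d+1}$ of size roughly $c_1(d)\cdot k$ each such that every transversal captures $q$ in its convex hull.

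The first main step is to set up a direction-sampling / projection argument. Pick a generic direction $u_1$ and project $S$ orthogonally onto the hyperplane $u_1^\perp$; alternatively, consider the family of hyperplanes through $q$ and look at how the points distribute. The key observation I would exploit is that by the ham-sandwich / centerpoint machinery, a point of high Tukey depth in $\mathbb{R}^d$ remains reasonably deep after projection to a lower-dimensional subspace — more precisely, if $\text{TD}(S,q)=k$ then for a random direction the projection $\bar S$ of $S$ (and $\bar q$ of $q$) still satisfies $\text{TD}(\bar S,\bar q)\geq c'k$ for some constant $c'=c'(d)$, because a thin halfspace in the projection pulls back to a halfspace in $\mathbb{R}^d$. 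Applying the induction hypothesis $E(d-1)$ to this projection yields $d$ clusters in the hyperplane that $(\Omega(k))$-enclose $\bar q$; lifting these back, one gets $d$ ``slabs'' of points in $\mathbb{R}^d$ whose transversals capture $q$ in the projected sense, and we still need to control the last coordinate to get a genuine $(d+1)$-fold enclosing configuration. This is where the extra halfspace in direction $u_1$ comes in: split $S$ into the points above and below $q$ along $u_1$; each side has at least $k$ points by the Tukey-depth hypothesis, and combined with the lifted lower-dimensional enclosing configuration we can assemble $d+1$ clusters.

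The step I expect to be the main obstacle is making the ``lift and combine'' argument produce a genuinely enclosing configuration with clusters that are honestly well-separated in $\mathbb{R}^d$, not merely in projection — a transversal could fail to capture $q$ because of the behaviour in the $u_1$-direction even though its projection captures $\bar q$. Controlling this requires choosing the clusters so that their cones (in the sense of Lemma~\ref{lem:cones}) from $q$ are pairwise on ``opposite sides'' in every relevant direction; the cleanest route is probably an iterated ham-sandwich argument: repeatedly bisect the current point set by hyperplanes through $q$, at each of the $d$ levels using the deep-point property to guarantee that both sides retain a constant fraction of the points, ending with $2^d$ ``orthant-like'' pieces, then select and group $d+1$ of them (or merge orthants) into the enclosing clusters. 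The bookkeeping of how the constant $c_1(d)$ degrades — it will be something like $c_1(d)=c_1(d-1)/(d+1)$ or a similar recursion, giving an exponentially small but dimension-only constant — is routine once the geometric selection is pinned down, and matches the shape of the bound claimed in the introduction (a constant $c=c(d)$ depending only on $d$). I would also invoke Lemma~\ref{lem:wellseparated} and Lemma~\ref{lem:halfspaces} at the end to certify that the configuration found is truly enclosing and hence a valid witness for $\text{ED}(S,q)$.
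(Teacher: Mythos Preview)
The proposal has a genuine gap, and you have correctly put your finger on it: the lift-and-combine step does not go through as sketched. Under orthogonal projection Tukey depth can only increase, so applying $E(d-1)$ to the projection does yield $d$ clusters enclosing $\bar q$ in the hyperplane. But ``enclosing in projection'' only guarantees that every transversal's convex hull meets the line $q+\mathbb{R}u_1$, not that it contains $q$. To upgrade $d$ projected clusters to $d{+}1$ genuine ones you must split at least one cluster by the hyperplane through $q$ orthogonal to $u_1$, and nothing prevents every cluster from lying entirely on one side of it. Your fallback---iterated ham-sandwich cuts producing $2^d$ orthant-like pieces---does not obviously help either: you still have to \emph{select and merge} pieces into $d{+}1$ sets that are well-separated in the sense of Lemma~\ref{lem:wellseparated}, and that selection is precisely the hard part, not a bookkeeping detail.

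The paper resolves this obstacle by a different mechanism. It does not use orthogonal projection and does not invoke $E(d-1)$ directly. Instead it takes a hyperplane $h$ through $q$ witnessing $\text{TD}(S,q)=k$, colours the $k$ points on the small side red and the rest blue, and projects \emph{centrally through $q$} to a hyperplane. Under central projection, a transversal of subsets contains $q$ in its convex hull if and only if the convex hulls of the red and blue points of the projected transversal intersect. The inductive hypothesis used is therefore not $E(d-1)$ but an auxiliary bichromatic positive-fraction Radon statement $R(d-1)$ (for point sets in which blue surrounds red, which the Tukey-depth assumption ensures). The induction is then closed by a separate argument that $E(d-1)\Rightarrow R(d)$, using the center transversal theorem and the Same Type Lemma of B\'ar\'any and Valtr. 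The colouring-plus-central-projection device is exactly the idea missing from your sketch; without it there is no clean way to recover control of the $(d{+}1)$-st direction from a $(d{-}1)$-dimensional enclosing configuration.
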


We will denote this statement in dimension $d$ by $E(d)$.
Note that $E(1)$ is true and $c_1(1)=1$.

This theorem has appeared in the literature before, both implicitly and explicitly.
First, it could be proved using the semi-algebraic same type lemma due to Fox, Pach and Suk \cite{semialgebraic}, combined with the first selection lemma (see e.g.~\cite{MatousekDiscreteGeometry}).
Alternatively, a result by Pach \cite{Pach} gives a lower bound on a colorful variant of enclosing depth in terms of colorful simplicial depth. As colorful simplicial depth can be bounded from below in terms of simplicial depth, which in turn can be bounded from below in terms of Tukey depth (see e.g.~\cite{Uli_Phd}), the result follows.
An explicit lower bound on enclosing depth in terms of Tukey depth was given by Fabila-Monroy and Huemer \cite{Fabila}.

Here we will give a different proof for two reasons: first, the bounds on $c_1$ that our proof gives are better than the bounds we get from the other proofs.
Second, our proof shows an intimate relation of enclosing depth to the following positive fraction Radon theorem on certain bichromatic point sets, which we believe to be of independent interest.

Let $P=R\cup B$ be a bichromatic point set with color classes $R$ (red) and $B$ (blue).
We say that $B$ \emph{surrounds} $R$ if for every halfspace $h$ we have $|B\cap h|\geq |R\cap h|$.
Note that this in particular implies $|B|\geq|R|$.
The positive fraction Radon theorem is now the following:

\begin{theorem}[$R(d)$]
Let $P=R\cup B$ be a bichromatic point set where $B$ surrounds $R$.
Then there is a constant $c_2=c_2(d)$ such that there are integers $a$ and $b$ and pairwise disjoint subsets $R_1,\ldots,R_a\subseteq R$ and $B_1,\ldots,B_b\subseteq B$ with
\begin{enumerate}
\item $a+b=d+2$,
\item $|R_i|\geq c_2\cdot |R|$ for all $1\leq i\leq a$,
\item $|B_i|\geq c_2\cdot |R|$ for all $1\leq i\leq b$,
\item for every transversal $r_1\in R_1,\ldots,r_a\in R_a, b_1\in B_1,\ldots, b_b\in B_b$, we have $\text{conv}(r_1,\ldots,r_a)\cap \text{conv}(b_1,\ldots,b_b)\neq\emptyset$.
\end{enumerate}
\end{theorem}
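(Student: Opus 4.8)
The plan is to prove $R(d)$ by induction on $d$, using the "surrounds" hypothesis to extract one structured part at a time via a ham-sandwich / cutting argument, and then recursing in one lower dimension on a cleverly chosen restriction of the point set. The base case $d=1$ is easy: if $B$ surrounds $R$ on the line, then placing $R$'s median we can split $R$ into a left half $R_1$ and a right half $R_2$, each of size at least $|R|/2 - O(1)$, and between them there are at least that many blue points on each side by the surrounding condition — this gives $a=b$ or $a+b=3$ with the transversal intersection being the standard one-dimensional Radon statement (a red interval and a blue point, or two reds and a blue, sandwiched appropriately). The constant $c_2(1)$ is then an absolute fraction.

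For the inductive step, the key idea is the following. Pick a hyperplane $h$ through (roughly) a ham-sandwich-type position so that it simultaneously bisects $R$ and has a prescribed fraction of $B$ on each side; more precisely, I would choose $h$ so that one closed side $h^+$ contains exactly $\lceil |R|/2\rceil$ red points, and then use the surrounding hypothesis to guarantee that $h^+$ also contains at least $\lceil |R|/2\rceil$ blue points. Project everything inside $h^+$ onto the hyperplane $h$ (or rather, look at the point set "seen from" the far side): one wants a lower-dimensional bichromatic configuration in which blue still surrounds red. Here the main technical point is that the surrounding property is not automatically inherited by an orthogonal projection, so instead I would argue via a careful partitioning: repeatedly applying the same halving step $d+1$ times, each time fixing one new "direction," produces nested regions $\mathbb{R}^d \supseteq Z_1 \supseteq Z_2 \supseteq \cdots$ whose differences $Z_j \setminus Z_{j+1}$ each contain a constant fraction of $R$ and, by surrounding, a matching constant fraction of $B$. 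Choosing these cutting hyperplanes to be in "well-separated" position (using the generalized ham-sandwich theorem cited in the proof of Lemma~\ref{lem:halfspaces}, together with the $\alpha$-version for point sets) lets me peel off the red parts $R_1,\dots,R_a$ and blue parts $B_1,\dots,B_b$ so that their convex hulls are arranged around a common point, and condition (4) — the transversal intersection — follows from a Radon-type argument exactly as in Lemma~\ref{lem:wellseparated}: if some transversal of reds were separated from some transversal of blues by a hyperplane $H$, then $H$ (or a translate) would have too few blue points on the side containing many reds, contradicting the surrounding hypothesis. The sizes in (2) and (3) are then a product of $d+1$ constant fractions, giving $c_2(d) = c_2^{(0)}/2^{O(d)}$ or similar; the integers $a,b$ with $a+b=d+2$ come out of how many cutting hyperplanes end up "charged" to red versus to blue.

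The main obstacle I expect is controlling the surrounding hypothesis through the recursion: "$|B\cap h| \geq |R\cap h|$ for every halfspace $h$" is a global condition, and after restricting to a cell $Z$ of an arrangement of cutting hyperplanes it is not obvious that $B\cap Z$ surrounds $R\cap Z$ — a halfspace inside $Z$ can be extended to a halfspace of $\mathbb{R}^d$ in many ways, but the extra red points it picks up outside $Z$ need not be balanced by blue points that also lie in $Z$. I would handle this by choosing the cuts so that each region $Z_j$ is itself a halfspace intersection with all of $R\setminus Z_j$ strictly separated from $Z_j$ by a single hyperplane with the correct blue/red balance on both sides; this is precisely what the $\alpha$-ham-sandwich theorem for well-separated sets buys us, since it lets us prescribe the number of points of each color on each side simultaneously. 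The remaining bookkeeping — verifying that $a+b=d+2$, that all $d+2$ parts are pairwise disjoint, and that every transversal satisfies the intersection property — is then a direct adaptation of the well-separatedness arguments already developed in Section~\ref{sec:axiom2}, applied with the roles of "transversal forces $q$ in the convex hull" replaced by "transversal of reds meets transversal of blues."
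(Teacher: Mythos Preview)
Your proposal has a genuine gap at the heart of the induction. You correctly identify the obstacle --- the surrounding condition is global and is \emph{not} inherited when you restrict to a cell or project to a hyperplane --- but your proposed fix does not close it. The $\alpha$-ham-sandwich theorem you invoke requires the families to be \emph{well-separated}, and nothing in your construction produces well-separated families; you are trying to use well-separatedness to build the parts, but well-separatedness is a conclusion about parts you have already found, not a tool available before you have them. Similarly, your argument for condition (4) (``if a hyperplane $H$ separated a red transversal from a blue transversal, it would violate surrounding'') fails: the surrounding hypothesis compares \emph{all} of $B$ to \emph{all} of $R$ in a halfspace, and says nothing about a halfspace that happens to contain a handful of selected red points and miss a handful of selected blue ones.

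The paper sidesteps exactly this obstacle by not inducting on $R(\cdot)$ directly. Instead it proves $E(d-1)\Rightarrow R(d)$ and $R(d-1)\Rightarrow E(d)$ and alternates. The point is that after projecting to a $(d-1)$-dimensional hyperplane, what survives is not the surrounding property but merely a \emph{centerpoint} (Tukey depth $\geq |R|/(3d)$) for each of three carefully chosen subsets --- two blue slabs $A_1,A_2$ at the extremes and a red slab $M$ in the middle. This is precisely the hypothesis needed to apply $E(d-1)$, which then yields enclosing families in the projection. To make the three projected centerpoints coincide, the paper uses the center transversal theorem (via Lemma~\ref{lem:sections} and Lemma~\ref{lem:unique_center_transversal}), a topological ingredient entirely absent from your sketch. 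Finally, to pass from ``each transversal of these $3d$ enclosing families has a red--blue Radon partition'' to ``there exist $d+2$ of them with the \emph{same} Radon partition for every transversal,'' the paper applies the Same Type Lemma (Theorem~\ref{lem:selection}) and Kirchberger's theorem; your repeated-halving scheme has no analogue of this step, and without it you cannot guarantee condition (4) uniformly over all transversals.
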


In other words, the Radon partition respects the color classes.
We will denote the above statement in dimension $d$ by $R(d)$.

\begin{lemma}
$R(1)$ can be satisfied choosing $a=1$, $b=2$ and $c_2(1)=\frac{1}{3}$.
\end{lemma}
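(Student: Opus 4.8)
The plan is to work on the line $\mathbb{R}^1$ and exploit the ``surrounds'' hypothesis directly. Since $B$ surrounds $R$, every closed halfline (ray) contains at least as many blue points as red points; in particular the leftmost and rightmost extreme points of $P=R\cup B$ are blue, or at least can be taken to be blue after reordering ties. Concretely, I would first argue that there is a point $m\in\mathbb{R}$ (a weighted median of $R$) such that both the ray $(-\infty,m]$ and the ray $[m,\infty)$ contain at least $|R|/2$ points of $R$, hence by the surrounding property at least $|R|/2$ points of $B$ on each side as well. This already gives the rough split I want: plenty of blue points far to the left, plenty of blue points far to the right, and all of $R$ sandwiched in between.

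Next I would set $a=1$, $b=2$, take $R_1$ to be the ``middle third'' of $R$ — formally, discard the $\lfloor |R|/3\rfloor$ leftmost and $\lfloor |R|/3\rfloor$ rightmost red points, leaving a contiguous block $R_1$ with $|R_1|\ge |R|/3$ — and let $m_L,m_R$ be the left and right endpoints of the interval $\mathrm{conv}(R_1)$. By construction there are at least $|R|/3$ red points (weakly) to the left of $m_L$, so by the surrounding property applied to the halfline $(-\infty,m_L)$ there are at least $|R|/3$ blue points strictly left of $R_1$; call this set $B_1$. Symmetrically the halfline $(m_R,\infty)$ contains at least $|R|/3$ blue points, forming $B_2$. (A small amount of care with the non-strict inequalities in the definition of ``surrounds'' and with coincident points is needed here, but the bound $c_2(1)=\tfrac13$ is comfortable enough to absorb the off-by-one rounding.) The sets $R_1,B_1,B_2$ are pairwise disjoint, each has size at least $\tfrac13|R|$, and $a+b=1+2=3=d+2$ as required.

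It remains to check the transversal condition: for any $r_1\in R_1$, $b_1\in B_1$, $b_2\in B_2$ we need $\{r_1\}\cap\mathrm{conv}(b_1,b_2)\ne\emptyset$, i.e.\ $r_1\in[b_1,b_2]$. This is immediate: $b_1<m_L\le r_1\le m_R<b_2$, so $r_1$ lies in the segment $[b_1,b_2]$, which is exactly $\mathrm{conv}(b_1,b_2)$ on the line. Hence condition (4) holds. I do not expect any real obstacle here — the only thing to be slightly careful about is the interplay between the weak inequality $|B\cap h|\ge|R\cap h|$ and strict/non-strict halflines when many points coincide, which is why one picks the ``middle third'' rather than the ``middle half'': the extra slack guarantees that even after rounding and after moving the cut points to avoid coincidences, each of $B_1$, $B_2$, $R_1$ retains at least $\lceil |R|/3\rceil$ points, matching $c_2(1)=\tfrac13$.
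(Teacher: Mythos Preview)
Your proposal is correct and follows essentially the same idea as the paper's proof: use the surrounding hypothesis to split the line into outer blue thirds and a red middle third. The only cosmetic difference is the order---the paper first fixes the two outer blocks $B_1,B_2$ of $|R|/3$ blue points (by choosing cut points $x_1,x_2$) and then uses surrounding to find $\ge |R|/3$ red points between them, whereas you first fix the middle third $R_1$ of $R$ and then use surrounding to find the blue blocks on either side; both arguments are equally short and have the same minor rounding/strict-vs-weak issues you already flagged.
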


\begin{proof}
Consider two points $x_1$ and $x_2$ such that there are exactly $\frac{|R|}{3}$ blue points to the left of $x_1$ and to the right of $x_2$, respectively.
Define $B_1$ as the set of blue points left of $x_1$ and $B_2$ as the set of blue points right $x_2$.
We then have $|B_1|=|B_2|=\frac{1}{3}|R|$.
Further, as $B$ surrounds $R$, we have at most $\frac{|R|}{3}$ red points to the left of $x_1$, and also to the right of $x_2$.
In particular, there are at least $\frac{|R|}{3}$ red points between $x_1$ and $x_2$.
Let now $R_1$ be any subset of $\frac{|R|}{3}$ red points between $x_1$ and $x_2$.
It follows from the construction that $\text{conv}(R_1)\cap \text{conv}(B_1,B_2)\neq\emptyset$.
\end{proof}

In the following, we will prove that $R(d-1)\Rightarrow E(d)$ and that $E(d-1)\Rightarrow R(d)$.
By induction, these two claims then imply the above theorems.

\begin{lemma}
$R(d-1)\Rightarrow E(d)$.
\end{lemma}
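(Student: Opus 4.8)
The plan is to use the statement $R(d-1)$ to produce, inside any halfspace-witness for the Tukey depth of $q$, an enclosing configuration. Let $\mathrm{TD}(S,q)=k$. First I would recall that $q$ having Tukey depth $k$ means every closed halfspace through $q$ contains at least $k$ points of $S$. The idea is to project: pick a generic hyperplane $h$ through $q$ (or rather, work with the radial projection from $q$ onto a small sphere $S^{d-1}$ around $q$), which turns the points of $S$ into a point set on $S^{d-1}$, and the condition ``every halfspace through $q$ contains $\geq k$ points'' becomes a statement about great-subsphere cuts. More concretely, I would choose a direction, split $S$ by a hyperplane through $q$ into $S^+$ and $S^-$, and project $S^-$ through $q$ onto the side of $S^+$; this is the standard trick (cf.\ the lifting used in proofs relating Tukey depth to Radon-type statements) that converts the two-sided depth condition in $\mathbb{R}^d$ into a one-sided ``$B$ surrounds $R$'' condition in $\mathbb{R}^{d-1}$.

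Concretely: fix a hyperplane $g$ through $q$ with roughly half the points of $S$ on each side; call them $S_{\mathrm{top}}$ and $S_{\mathrm{bot}}$. Centrally project $S_{\mathrm{bot}}$ through $q$ to the top side, and then project everything from $q$ onto an affine hyperplane $g'$ parallel to $g$ on the top side. Color the images of $S_{\mathrm{top}}$ blue and the images of $S_{\mathrm{bot}}$ red (after this reflection). The Tukey-depth-$k$ condition on $S$ translates into: for every halfspace $h$ of $g'\cong\mathbb{R}^{d-1}$, the number of blue points in $h$ is at least (something like) $k$ minus the number of red points in $h$, i.e.\ after setting things up correctly, $B$ surrounds $R$ with $|R|\gtrsim k$ — this is the place where the bookkeeping must be done carefully, since I need $|R|$ to be a genuine constant fraction of $k$ and not just nonnegative. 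Possibly one first has to throw away or rebalance points, or apply the depth condition to the two halfspaces bounded by $g$ to guarantee $|S_{\mathrm{top}}|,|S_{\mathrm{bot}}|\geq k$.

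Once $B$ surrounds $R$ in $\mathbb{R}^{d-1}$ with $|R|\geq c\cdot k$, apply $R(d-1)$: this gives pairwise disjoint $R_1,\dots,R_a\subseteq R$ and $B_1,\dots,B_b\subseteq B$ with $a+b=d+1$, each of size $\geq c_2(d-1)|R|\geq c_2(d-1)c\cdot k$, such that every red transversal simplex meets every blue transversal simplex in $\mathbb{R}^{d-1}$. Lifting back through $q$: the $a$ red parts correspond to point subsets $\widetilde R_1,\dots,\widetilde R_a$ of $S_{\mathrm{bot}}$ and the $b$ blue parts to subsets $\widetilde B_1,\dots,\widetilde B_b$ of $S_{\mathrm{top}}$, and the planar crossing condition $\mathrm{conv}(r_1,\dots,r_a)\cap\mathrm{conv}(b_1,\dots,b_b)\neq\emptyset$ on $g'$ translates, under the projection cone from $q$, exactly to the statement that $q\in\mathrm{conv}(\tilde r_1,\dots,\tilde r_a,\tilde b_1,\dots,\tilde b_b)$ for every transversal — this is the geometric heart, and it works because a point lies on the segment/simplex through $q$ in the ``folded'' picture iff $q$ is a convex combination in the original picture. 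Thus $S_1:=\widetilde R_1,\dots,S_a:=\widetilde R_a,S_{a+1}:=\widetilde B_1,\dots,S_{d+1}:=\widetilde B_b$ is a $(d+1)$-tuple of disjoint subsets of $S$, each of size $\geq c_2(d-1)c\cdot k$, every transversal of which has $q$ in its convex hull. Taking the smallest such size we get $\mathrm{ED}(S,q)\geq c_1(d)\cdot k$ with $c_1(d)=c_2(d-1)\cdot c$ for the explicit constant $c$ coming from the balancing step.

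**Main obstacle.** The delicate point is the translation of the depth condition through the projection: (a) guaranteeing that the red set $R$ has size a constant fraction of $k$ (not merely that ``$B$ surrounds $R$'' holds vacuously), which likely forces a careful choice of $g$ and possibly discarding points so that the surround inequality $|B\cap h|\geq|R\cap h|$ survives with the right slack; and (b) verifying the equivalence ``planar transversal simplices cross in $g'$'' $\Longleftrightarrow$ ``$q$ is enclosed by the lifted transversal in $\mathbb{R}^d$'', including the subtlety that one red part and one blue part must combine into a single simplex of $d+1$ vertices through $q$, so $a+b=d+2$ in $R(d-1)$'s ambient $\mathbb{R}^{d-1}$ becomes $a+b=d+1$ parts here — the index arithmetic ($R(d-1)$ lives in $\mathbb{R}^{d-1}$ and gives $a+b=(d-1)+2=d+1$ parts) must be tracked precisely, and the general-position reductions for $q$ must be invoked so that the central projection is well-defined and injective on $S$.
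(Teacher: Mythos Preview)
Your overall architecture --- centrally project $S$ from $q$ to an affine hyperplane, obtain a bichromatic set in $\mathbb{R}^{d-1}$ where $B$ surrounds $R$, apply $R(d-1)$, and lift back --- is exactly the paper's approach, and your lifting argument (b) and your index arithmetic $a+b=(d-1)+2=d+1$ are both correct.

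The genuine gap is your choice of the splitting hyperplane $g$. Choosing $g$ so that ``roughly half the points are on each side'' does \emph{not} yield the surrounding condition. Indeed, if $L^+$ is the halfspace through $q$ corresponding to a halfspace $\ell^+$ in $g'$, then (tracking which side red points land on after central projection) one computes
\[
|B\cap\ell^+|=|S_{\mathrm{top}}\cap L^+|,\qquad |R\cap\ell^+|=|S_{\mathrm{bot}}|-|S_{\mathrm{bot}}\cap L^+|,
\]
so $|B\cap\ell^+|\geq|R\cap\ell^+|$ is equivalent to $|S\cap L^+|\geq|S_{\mathrm{bot}}|$. The Tukey condition only guarantees $|S\cap L^+|\geq k$, which is useless if $|S_{\mathrm{bot}}|\approx|S|/2\gg k$. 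Your stated obstacle (a) is therefore a real obstruction to your choice of $g$, and ``discarding points'' or ``rebalancing'' is the wrong instinct.

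The fix, which is what the paper does, is to take $g=h$ to be a \emph{witnessing} hyperplane for the Tukey depth: one with exactly $k$ points of $S$ on one side. Color those $k$ points red and the rest blue. Then $|R|=k$ on the nose (so obstacle (a) vanishes: no constant $c<1$ is lost), and the inequality $|S\cap L^+|\geq k=|S_{\mathrm{bot}}|$ holds for every $L^+$ through $q$, giving $B$ surrounds $R$ immediately. Applying $R(d-1)$ then yields $c_1(d)\geq c_2(d-1)$ directly.
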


\begin{proof}
We assume that $q$ does not coincide with a point from $S$, otherwise we just remove that point from $S$.
Assume that $\text{TD}(S,q)=k$ and let $h$ be a witnessing hyperplane which contains $q$ but no points of $S$.
Without loss of generality, assume that $q$ is the origin and that $h$ is the hyperplane through the equator on $S^{d-1}\subseteq \mathbb{R}^d$, with exactly $k$ points below.
Color the points below $h$ red and the points above $h$ blue.
Now, for every point $p\in S$, consider the line through $p$ and $q$ and let $p'$ be the intersection of that line with the tangent hyperplane to the north pole of $S^{d-1}$.
Color $p'$ the same color as $p$.
This gives a bichromatic point set $S'=R\cup B$ in $\mathbb{R}^{d-1}$.
Further, in $S'$, we have that $B$ surrounds $R$: Assume there is a hyperplane $\ell$ (in $\mathbb{R}^{d-1}$) with $r$ red points and $b$ blue points on its positive side, where $r>b$.
In $\mathbb{R}^d$, this lifts to a hyperplane containing $q$ with $k-r$ red points and $b$ blue points on its positive side (note that there are exactly $k$ red points).
However, $k-r+b<k$, whenever $r>b$, thus we would have $\text{TD}(S,q)<k$, which is a contradiction.

As we now have a point set in $\mathbb{R}^{d-1}$, in which $B$ surrounds $R$, we can apply $R(d-1)$ to find families of $d+2$ subsets of $S'$, each of size $c_2\cdot k$, some red and some blue, such that in each transversal the color classes form a Radon partition.
We claim that the corresponding subsets of $S$ $c_2\cdot k$-enclose $q$.
Pick some transversal (which we call the original red and blue points) and consider the corresponding subset in $S'$.
Let $z$ be a point in the intersection of the convex hulls of the two color classes, and let $g$ be the line through $z$ and $q$.
As $z$ is in the convex hull of the blue points, there is a point $z^+$ on $g$ which is in the convex hull of the original blue points, and thus above $h$.
Similarly, there is a point $z^-$ on $g$ which is in the convex hull of the original red points, and thus below $h$.
As $q$ is in the convex hull of $z^+$ and $z^-$, it is thus in the convex hull of the original blue and red points.
\end{proof}

In particular, this proof shows that $c_1(d)\geq c_2(d-1)$.

For the proof of the second implication, we need to recall a few results, starting with the \emph{Same Type Lemma} by B\'{a}r\'{a}ny and Valtr \cite{Barany}.

\begin{theorem}[Theorem 2 in \cite{Barany}]
\label{lem:selection}
For every two natural numbers $d$ and $m$ there is a constant $c_3(d,m)>0$ with the following property: Given point sets $X_1,\ldots,X_m\subseteq\mathbb{R}^d$ such that $X_1\cup\ldots\cup X_m$ is in general position, there are subsets $Y_i\subseteq X_i$ with $|Y_i|\geq c_3\cdot|X_i|$ such that all transversals of the $Y_i$ have the same order type.
\end{theorem}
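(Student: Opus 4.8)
The plan is to reduce the statement to the special case $m=d+1$ of exactly $d+1$ point classes, and then to settle that core case by induction on the dimension. For the reduction: the order type of $X_1\cup\cdots\cup X_m$ is the list of orientations of all ordered $(d+1)$-tuples of points, and every such tuple involves points from at most $d+1$ of the classes, so it is enough to ensure that, for each $(d+1)$-element index set $\sigma\subseteq\{1,\ldots,m\}$, all transversals of the classes indexed by $\sigma$ share a single orientation. I would enumerate the $\binom{m}{d+1}$ index sets $\sigma$ and process them one at a time: at step $\sigma$, apply the core case to the \emph{current} versions of the (at most $d+1$) classes indexed by $\sigma$, leaving every other class untouched. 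Passing to a subset never destroys an orientation that is already constant, so after all steps each $\sigma$ is ``monochromatic'', while each $X_i$ has been shrunk at most $\binom{m}{d+1}$ times; setting $c_3(d,m):=c_3(d,d+1)^{\binom{m}{d+1}}$ then gives the theorem from the core case.

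Next I would reformulate the core case. For $d+1$ sets $Y_1,\ldots,Y_{d+1}$ in general position in $\mathbb{R}^d$, ``all transversals have the same orientation'' is equivalent to ``$Y_1,\ldots,Y_{d+1}$ are well-separated'' in the sense recalled before Lemma~\ref{lem:wellseparated}, i.e.\ no hyperplane meets all of $\text{conv}(Y_1),\ldots,\text{conv}(Y_{d+1})$. Indeed, the orientation determinant of a transversal is multilinear in its columns, so for points $w_i\in\text{conv}(Y_i)$ written as convex combinations of points of $Y_i$, the orientation determinant of $(w_1,\ldots,w_{d+1})$ is a convex combination of orientation determinants of transversals of $Y_1,\ldots,Y_{d+1}$; if these all have one sign, that determinant is nonzero, so no hyperplane can contain a point from each $\text{conv}(Y_i)$. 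Conversely, if the $Y_i$ are well-separated then the orientation determinant, being continuous, is nowhere zero on the connected set $\text{conv}(Y_1)\times\cdots\times\text{conv}(Y_{d+1})$ and hence has constant sign there. So the core case becomes: \emph{given $d+1$ sets in general position, find subsets $Z_i\subseteq Y_i$ with $|Z_i|\geq c\cdot|Y_i|$ that are well-separated}.

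Finally I would produce such well-separated subsets by induction on $d$. The base case $d=1$ is two sets on a line: choose a value $t$, lying in a gap of $Y_1\cup Y_2$, with exactly $\lfloor|Y_1|/2\rfloor$ points of $Y_1$ below it; one side of $t$ contains at least half of $Y_2$, and taking for $Z_1$ the (at least $|Y_1|/2$) points of $Y_1$ on the opposite side yields two subsets strictly separated by $t$, so $c=1/2$ works. For the inductive step I would apply the (generalized) Ham--Sandwich theorem already used in Lemma~\ref{lem:halfspaces} to split the configuration along a cleverly chosen hyperplane --- at the cost of only a constant factor in each class --- so that the resulting pieces lie in a position where the inductive hypothesis in one lower dimension applies, and then reassemble the separators into hyperplanes witnessing all $2^d$ required bipartitions of $\{Z_1,\ldots,Z_{d+1}\}$; this yields an explicit value of $c_3(d,d+1)$, decreasing in $d$. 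The main obstacle is precisely this inductive step: the cuts must be arranged so that \emph{all} of the $2^d$ separations survive the recursion simultaneously while a constant fraction of each class is retained. Everything else --- the reduction to $d+1$ classes, the multilinearity-and-connectedness reformulation, and the constant-chasing --- is routine.
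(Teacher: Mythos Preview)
The paper does not prove this theorem: it is quoted verbatim as Theorem~2 of B\'ar\'any--Valtr and used as a black box, with only remarks on the resulting constant $c_3(d,m)$ and the improvement by Fox--Pach--Suk. So there is no in-paper argument to compare your attempt against.

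That said, your outline is essentially the original B\'ar\'any--Valtr strategy. The reduction from general $m$ to $m=d+1$ by iterating over all $(d+1)$-element index sets is exactly how they proceed (a minor bookkeeping point: each $X_i$ is shrunk $\binom{m-1}{d}$ times, not $\binom{m}{d+1}$, but your bound is a valid overcount). Your equivalence ``same orientation on all transversals $\Longleftrightarrow$ well-separated'' via multilinearity of the orientation determinant and connectedness of $\prod_i\text{conv}(Y_i)$ is the standard reformulation they use, and your base case $d=1$ is fine (up to a $\lfloor\cdot\rfloor$ that does not matter).

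The only real issue is the one you flag yourself: the inductive step is not a proof but a wish. Saying you will ``apply Ham--Sandwich to split along a cleverly chosen hyperplane'' and then ``reassemble the separators'' does not explain how a single hyperplane cut reduces the $(d+1)$-set well-separation problem in $\mathbb{R}^d$ to an instance in $\mathbb{R}^{d-1}$, nor how the $2^d$ required bipartitions are handled simultaneously. In B\'ar\'any--Valtr the induction is not a simple cut-and-project; they repeatedly bisect pairs of classes and use a more careful combinatorial argument to control all the separations at once. As written, your inductive step is a placeholder rather than an argument, so the proposal is an accurate road map but not yet a proof.
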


We note that while Theorem 2 in \cite{Barany} is stated using the general position assumption, in Remark 5 of the same paper it is mentioned that the result still holds without it and, in fact, even holds for Borel measures instead of point sets.

From the proof in \cite{Barany}, we get $c_3(d,m)=2^{-m^{O(d)}}$. This bound has been improved in \cite{semialgebraic} to $c_3(d,m)=2^{-O(d^3m\log m)}$.

The second result that we will need is the \emph{Center Transversal Theorem}, proved independently by Dol'nikov \cite{Dolnikov} as well as Zivaljevi{\'c} and Vre{\'c}ica \cite{Zivaljevic}.
We will only need the version for two colors, so we state it in this restricted version:

\begin{theorem}[Center Transversal for two colors]
Let $\mu_1$ and $\mu_2$ be two finite Borel measures on $\mathbb{R}^d$.
Then there exists a line $\ell$ such that for every closed halfspace $H$ which contains $\ell$ and every $i\in\{1,2\}$ we have $\mu_i(H)\geq\frac{\mu_i(\mathbb{R}^d)}{d}$.
\end{theorem}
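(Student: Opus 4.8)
The plan is to translate the statement into a common-centerpoint problem for the two measures projected onto a hyperplane, and then to exclude the bad case by an obstruction-class computation on real projective space.

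First I would reformulate. A line $\ell\subseteq\mathbb{R}^d$ is the same data as a direction $w\in S^{d-1}$ together with the point $q:=\pi_w(\ell)$ in the hyperplane $w^\perp\cong\mathbb{R}^{d-1}$, where $\pi_w$ is the orthogonal projection onto $w^\perp$ and $\ell=q+\mathbb{R}w$. If a closed halfspace $H$ contains $\ell$ then its bounding hyperplane must be parallel to $w$ (otherwise $\ell$ would cross it), so $H=\pi_w^{-1}(\bar H)$ for a closed halfspace $\bar H\subseteq w^\perp$, and then $H\supseteq\ell$ is equivalent to $q\in\bar H$; moreover $\mu_i(H)=(\pi_w)_*\mu_i(\bar H)$. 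Thus the theorem is equivalent to the following: there is a direction $w$ such that the projected measures $\nu_i:=(\pi_w)_*\mu_i$ on $w^\perp$ admit a common point of Tukey depth at least $\tfrac1d$; the desired line is then $\pi_w^{-1}$ of that point.

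For each $w$ let $Z_i(w)\subseteq w^\perp$ be the set of points of $\nu_i$-Tukey depth at least $\tfrac1d$. Since $\dim w^\perp=d-1$, the centerpoint theorem (in its version for finite Borel measures) gives $Z_i(w)\neq\emptyset$, and $Z_i(w)$ is compact and convex, being cut out by halfspaces. As projecting along $w$ and along $-w$ is the same operation, $Z_i(w)=Z_i(-w)$ as subsets of $\mathbb{R}^d$. Now suppose for contradiction that $Z_1(w)\cap Z_2(w)=\emptyset$ for every $w$. For two disjoint compact convex sets the shortest connecting segment has a well-defined direction (any two minimizing pairs differ by the same vector, by strict convexity of the squared norm), so assigning to $w$ the unit vector $u(w)\in w^\perp$ in the direction from $Z_2(w)$ to $Z_1(w)$ yields a continuous map $S^{d-1}\to\mathbb{R}^d$ with $u(w)\perp w$, $|u(w)|=1$, and $u(-w)=u(w)$; in other words, an \emph{even} nowhere-vanishing tangent vector field on $S^{d-1}$.

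Finally I would show this is impossible. Such a field descends to a nowhere-zero section of the rank-$(d-1)$ bundle $\gamma^\perp$ (the orthogonal complement of the tautological line bundle $\gamma$) over $\mathbb{RP}^{d-1}$. Since $\gamma\oplus\gamma^\perp$ is trivial, $w(\gamma^\perp)=(1+x)^{-1}=1+x+\dots+x^{d-1}$ in $H^\ast(\mathbb{RP}^{d-1};\mathbb{Z}/2)=\mathbb{Z}/2[x]/(x^d)$, so the top Stiefel--Whitney class $w_{d-1}(\gamma^\perp)=x^{d-1}\neq0$; for a real bundle whose rank equals the dimension of the base this class is the obstruction to a nonvanishing section, a contradiction. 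This proves the theorem. The step that needs the most care, and which I view as the main technical obstacle, is continuity: the assignment $\nu\mapsto Z(\nu)$ can be discontinuous for atomic measures, so I would first run the argument for absolutely continuous measures (where $Z(\nu)$ varies continuously in the Hausdorff metric) and then deduce the general case — in particular the case of point sets — by an approximation argument together with a compactness argument on the relevant family of witnessing lines.
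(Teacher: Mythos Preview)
Your proposal is correct and follows essentially the same route as the paper's sketch: project both measures onto a variable hyperplane $w^\perp$, look at the centerpoint regions there, and reduce the existence of a common centerpoint to the nonexistence of a nowhere-zero section of the rank-$(d-1)$ bundle $\gamma^\perp$ over $\mathbb{RP}^{d-1}$ (equivalently, the tautological bundle over the Grassmannian of hyperplanes). The only packaging difference is that the paper selects a single canonical point in each centerpoint region (its center of mass), obtaining two continuous sections $g_1,g_2$ and then invoking the Dol'nikov / \v{Z}ivaljevi\'c--Vre\'cica lemma that two such sections must coincide somewhere; you instead keep the full regions $Z_i(w)$, and from their assumed disjointness extract directly a nowhere-zero section via the nearest-pair direction, then compute the obstruction $w_{d-1}(\gamma^\perp)=x^{d-1}\neq 0$ yourself. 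Your remark about handling atomic measures by approximation is the standard fix and is exactly the issue the paper glosses over in its sketch.
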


Such a line $\ell$ is called a \emph{center transversal}.
By a standard argument (replacing points with balls of small radius, see e.g. \cite{Matousek}), the same result also holds for two point sets $P_1, P_2$, where $\mu_i(H)$ is replaced by $|P_i\cap H|$.
As we will need similar ideas later, we will briefly sketch a proof of the above Theorem.
Consider some $(d-1)$-dimensional linear subspace $F$, i.e., a hyperplane through the origin, and project both measures to it.
For each projected measure, consider the centerpoint region (i.e., the region of Tukey depth $\geq\frac{\mu_i(\mathbb{R}^d)}{(d-1)+1}$).
This is a non-empty, convex set, so it has a unique center of mass, which we will denote by $g_i(F)$.
Rotating the subspace $F$ in continuous fashion, these centers of mass also move continuously, so the $g_i(F)$ are two continuous assignments of points to the set of all $(d-1)$-dimensional linear subspaces.
The result then follows from the following Lemma, again proved independently by Dol'nikov (\cite{Dolnikov}, Lemma 1) as well as Zivaljevi{\'c} and Vre{\'c}ica (\cite{Zivaljevic}, Proposition 2).
In their works, the result is phrased in terms of sections of the canonical bundle over a the Grassmannian manifold, we use a rephrased version which is less general than their statements.

\begin{lemma}
\label{lem:sections}
Let $g_1$ and $g_2$ be two continuous assignments of points to the set of all $(d-1)$-dimensional linear subspaces of $\mathbb{R}^d$.
Then there exists such a subspace $F$ in which $g_1(F)=g_2(F)$.
\end{lemma}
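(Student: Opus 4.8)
The plan is to phrase the statement bundle-theoretically over the Grassmannian $G_{d-1}(\mathbb{R}^{d})$ of $(d-1)$-dimensional linear subspaces, which we identify with real projective space $\mathbb{RP}^{d-1}$: fixing an inner product on $\mathbb{R}^{d}$, the map $F\mapsto F^{\perp}$ is a homeomorphism $G_{d-1}(\mathbb{R}^{d})\to\mathbb{RP}^{d-1}$. Let $E\to\mathbb{RP}^{d-1}$ be the real vector bundle of rank $d-1$ whose fibre over a line $L$ is the hyperplane $L^{\perp}$; concretely $E=\{(F,x):x\in F\}\subseteq G_{d-1}(\mathbb{R}^{d})\times\mathbb{R}^{d}$ with the projection to the first coordinate. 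By definition, a continuous assignment $g_i$ of a point $g_i(F)\in F$ to every $(d-1)$-dimensional subspace $F$ is exactly a continuous section of $E$. Since each fibre of $E$ is a linear subspace, the pointwise difference $s:=g_1-g_2$, given by $F\mapsto g_1(F)-g_2(F)$, is again a continuous section of $E$, and $s(F)$ is the zero vector of the fibre $F$ precisely when $g_1(F)=g_2(F)$. So it suffices to prove that $E$ admits no nowhere-zero section.

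Suppose $E$ had a nowhere-zero section. Then it spans a trivial line subbundle, whose complement $E'$ has rank $d-2$, so $E\cong\underline{\mathbb{R}}\oplus E'$ and the Whitney sum formula gives $w_{d-1}(E)=w_{d-1}(E')$, which is $0$ since Stiefel--Whitney classes vanish above the rank. On the other hand, $E$ is the orthogonal complement of the tautological line bundle $\gamma$ inside the trivial bundle $\underline{\mathbb{R}}^{d}$, so $w(\gamma)\,w(E)=1$ and hence $w(E)=(1+a)^{-1}=1+a+a^{2}+\dots+a^{d-1}$ in $H^{*}(\mathbb{RP}^{d-1};\mathbb{Z}/2)=(\mathbb{Z}/2)[a]/(a^{d})$, where $a$ is the degree-one generator. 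In particular $w_{d-1}(E)=a^{d-1}\neq 0$, a contradiction. Therefore $s$ vanishes at some $F$, i.e.\ $g_1(F)=g_2(F)$.

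If one prefers to avoid characteristic classes, there is a hands-on alternative: pull $s$ back along the double cover $S^{d-1}\to\mathbb{RP}^{d-1}$, $v\mapsto\mathbb{R}v$, to obtain an even map $\tilde s:S^{d-1}\to\mathbb{R}^{d}$ with $\tilde s(v)\perp v$; were it nowhere zero, its normalisation $h(v)=\tilde s(v)/\lVert\tilde s(v)\rVert$ would satisfy $h(v)\perp v$ and $h(-v)=h(v)$. Rotating $v$ to $h(v)$ within the sphere exhibits $h$ as homotopic to the identity and, after a half-turn, to the antipodal map, so $\deg h=1$ and $\deg h=(-1)^{d}$; for $d$ odd this is already absurd, and for $d$ even the relation $h(-v)=h(v)$ makes $h$ factor through $\mathbb{RP}^{d-1}$, forcing $\deg h$ to be a multiple of the degree $\pm 2$ of the covering, again contradicting $\deg h=1$.

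I expect the only delicate points to be bookkeeping: topologising the set of $(d-1)$-dimensional subspaces as the Grassmannian, identifying it with $\mathbb{RP}^{d-1}$, and verifying that $g_1-g_2$ really is a continuous section of the same bundle $E$ (it is, precisely because the fibres are vector spaces and subtraction is continuous). The topological input -- $w_{d-1}(\gamma^{\perp})\neq 0$, or equivalently the degree argument above -- is standard and short, and recovers the special case of the section results of Dol'nikov \cite{Dolnikov} and Zivaljevi{\'c}--Vre{\'c}ica \cite{Zivaljevic} used in the sketch of the Center Transversal Theorem.
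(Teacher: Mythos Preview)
Your proof is correct. Note, however, that the paper does not actually prove this lemma: it is stated with attribution to Dol'nikov and to \v{Z}ivaljevi\'c--Vre\'cica, who phrase it exactly as you do, as the non-existence of a nowhere-zero section of the canonical bundle over the Grassmannian. So rather than comparing your argument to the paper's, the relevant comparison is to those sources, and your Stiefel--Whitney computation is precisely the standard route they take.

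Your alternative degree argument is a pleasant addition. One small point worth making explicit for the reader: when $d$ is even, $d-1$ is odd, so $\mathbb{RP}^{d-1}$ is orientable and the covering map $p:S^{d-1}\to\mathbb{RP}^{d-1}$ has a well-defined integer degree $\pm 2$; this is what makes the factorisation $h=\bar h\circ p$ force $\deg h$ to be even. You use this implicitly, and it is correct, but since orientability of $\mathbb{RP}^{d-1}$ depends on the parity of $d-1$, it is worth saying why the degree of $p$ makes sense precisely in the case you need it.
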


Note that in order to apply this Lemma, we had to choose in a continuous way a centerpoint.
If the two measures can be separated by a hyperplane, we can do something similar with the center transversal:

\begin{lemma}
\label{lem:unique_center_transversal}
Let $\mu_1$ and $\mu_2$ be two finite Borel measures on $\mathbb{R}^d$, which can be separated by a hyperplane.
Then there is a canonical choice of a center transversal, that is, a choice of a center transversal which is continuous under continuous changes of the measures.
\end{lemma}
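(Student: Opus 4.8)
The plan is to imitate the proof of the Center Transversal Theorem sketched above — for each transverse direction pick a canonical centerpoint of a projected measure and then invoke a coincidence (Lemma~\ref{lem:sections}) — but to use the separation to replace ``a coincidence exists'' by ``a certain strictly monotone operator has a (necessarily unique) zero'', after which continuity is automatic. We may assume $\mu_1,\mu_2$ are \emph{strictly} separated, i.e.\ $\text{conv}(\text{supp}\,\mu_1)$ and $\text{conv}(\text{supp}\,\mu_2)$ are disjoint compact convex sets; the boundary case follows by approximation. Let $g$ be the maximum-margin separating hyperplane — the perpendicular bisector of the shortest segment between those two bodies, which is unique and depends continuously on $(\mu_1,\mu_2)$ — and fix Euclidean coordinates with $g=\{y_d=0\}$ and $\mu_1$ above $g$. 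Every center transversal must cross $g$: it meets both $\text{conv}(\text{supp}\,\mu_i)$ (else a closed halfspace containing it would have zero $\mu_i$-mass), and these lie strictly on opposite sides of $g$; hence every center transversal is a line $\ell_{x,v}:=\{(x+tv,\,t):t\in\mathbb{R}\}$ with $x,v\in\mathbb{R}^{d-1}$. For a slope $v$ let $\nu_i^v$ be the pushforward of $\mu_i$ under $(a,s)\mapsto a-sv$, the projection along direction $(v,1)$ onto $g\cong\mathbb{R}^{d-1}$. Computing preimages of halfspaces, $\ell_{x,v}$ is a center transversal of $\mu_i$ precisely when $x$ lies in the centerpoint region $Z_i(v):=\{z:\text{TD}_{\nu_i^v}(z)\ge \mu_i(\mathbb{R}^d)/d\}$, a non-empty compact convex body by the centerpoint theorem in dimension $d-1$; so $\ell_{x,v}$ is a center transversal of the pair iff $x\in Z_1(v)\cap Z_2(v)$.

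Now attach to each body $Z_i(v)$ its \emph{Steiner point} $s_i(v)=\tfrac1\kappa\int_{S^{d-2}}n\,h_{Z_i(v)}(n)\,\mathrm{d}\sigma(n)$ (with the normalization for which translations act by translations), rather than its center of mass: the Steiner point is continuous, lies in $Z_i(v)$, and is equivariant under the orthogonal maps fixing $g$, so the construction will not depend on the remaining coordinate freedom. Put $F:=s_1-s_2\colon\mathbb{R}^{d-1}\to\mathbb{R}^{d-1}$; a zero $v^\ast$ of $F$ yields the center transversal $\ell_{s_1(v^\ast),v^\ast}$, since then $s_1(v^\ast)=s_2(v^\ast)\in Z_1(v^\ast)\cap Z_2(v^\ast)$. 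The heart of the argument is that $-F$ is strongly monotone. Writing $h(\nu,n)=\sup\{t:\nu(\{\langle\cdot,n\rangle\ge t\})\ge \mu_i(\mathbb{R}^d)/d\}$ for the support function of $Z_i(v)$ and working directly at the level of $\mu_i$, one checks that replacing slope $v$ by $v+\delta$ moves each halfspace defining $Z_1(v)$ with outer normal $n$ by at least $m_1\langle\delta,n\rangle$ in the $n$-direction, where $m_1:=\min\{y_d : y\in\text{supp}\,\mu_1\}>0$; integrating against $n\,\mathrm{d}\sigma(n)$ over $S^{d-2}$ gives $\langle s_1(v+\delta)-s_1(v),\delta\rangle\le -m_1|\delta|^2$, and symmetrically $\langle s_2(v+\delta)-s_2(v),\delta\rangle\ge m_2|\delta|^2$ with $m_2:=-\max\{y_d:y\in\text{supp}\,\mu_2\}>0$. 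Hence $\langle (-F)(v+\delta)-(-F)(v),\delta\rangle\ge(m_1+m_2)|\delta|^2$, so $|F(v+\delta)-F(v)|\ge(m_1+m_2)|\delta|$: $F$ is injective and proper, hence a homeomorphism of $\mathbb{R}^{d-1}$ by invariance of domain, and has a unique zero $v^\ast$. Output $\ell_{s_1(v^\ast),v^\ast}$. Since $F$ depends continuously on $(\mu_1,\mu_2)$ and its monotonicity modulus $m_1+m_2$ stays bounded away from $0$ locally, $v^\ast=F^{-1}(0)$, and with it the center transversal, varies continuously with the measures.

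The one genuinely computational step is the displacement estimate for $h(\nu_1^{\,v},n)$ under a change of slope: because $(a,s)\mapsto a-sv$ need not be injective on $\text{supp}\,\mu_1$ one cannot argue that $Z_1(v)$ simply translates, and the estimate has to be read off from the defining halfspaces of $\mu_1$ in $\mathbb{R}^d$ (splitting on the sign of $\langle\delta,n\rangle$). The remaining points are routine but should be checked: the continuity of $Z_i(v)$ and of its Steiner point under the weak convergence of $\nu_i^v$ — the same continuity already used, for the center of mass, in the sketch of the Center Transversal Theorem — and the passage from ``separated'' to ``strictly separated'', where one must verify that the limiting center transversal is independent of the approximating sequence (equivalently, one may simply state the lemma for strictly separated measures, which is the case needed in the sequel).
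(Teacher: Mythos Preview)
Your argument is correct and genuinely different from the paper's. The paper parameterizes directions by the upper hemisphere of $S^{d-1}$, shows that the set $C$ of directions for which the two projected centerpoint regions intersect is a convex subset of that hemisphere (this is the main work, done by tracking how points cross a fixed hyperplane under a rotation), and then outputs the center transversal determined by the center of mass of $C$ together with the center of mass of the corresponding intersection of centerpoint regions. In contrast, you parameterize directions linearly by slopes $v\in\mathbb{R}^{d-1}$ over a canonical separating hyperplane, attach to each measure the Steiner point $s_i(v)$ of its projected centerpoint region, and show that $F=s_1-s_2$ is a strongly monotone (hence bijective) operator on $\mathbb{R}^{d-1}$; the unique zero of $F$ gives the canonical direction.

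What each approach buys: the paper's convexity argument is purely qualitative and avoids any computation with support functions, but needs two nested center-of-mass selections and a separate argument that $C$ is independent of the choice of separating hyperplane. Your monotone-operator argument is more analytic but also more informative: the strong monotonicity modulus $m_1+m_2$ (the width of the separating slab) directly yields a quantitative stability bound on the chosen transversal, and uniqueness of $v^\ast$ falls out for free rather than being manufactured by averaging. Your use of the Steiner point instead of the center of mass is a nice touch, since it is Lipschitz for Hausdorff distance and $O(d{-}1)$-equivariant, which cleanly handles the residual coordinate freedom on $g$. The displacement estimate for $h(\nu_1^v,n)$ under $v\mapsto v+\delta$ does indeed go through by the sign split you indicate (each point's $n$-coordinate moves by $-s\langle\delta,n\rangle$ with $s\ge m_1$, giving $\langle n,\delta\rangle\bigl(h(\nu_1^{v+\delta},n)-h(\nu_1^{v},n)\bigr)\le -m_1\langle\delta,n\rangle^2$ in both cases), and with the Steiner-point normalization the spherical integral collapses to $|\delta|^2$. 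The two caveats you flag --- continuity of $Z_i(v)$ in the Hausdorff metric under weak convergence of $\nu_i^v$, and the passage from separated to strictly separated --- are indeed the only points needing care, and they are shared with the paper's proof (which uses the same continuity for the center of mass and implicitly assumes strict separation when restricting to the open hemisphere). For the application in the sequel the measures are always strictly separated, so you may simply state the lemma in that form.
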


\begin{proof}
Let $x_1,\ldots,x_d$ be the basis vectors of $\mathbb{R}^d$ and assume without loss of generality that the hyperplane $H: x_d=0$ separates the two measures $\mu_1,\mu_2$, with $\mu_1$ being above $H$ and $\mu_2$ below.
For any $(d-1)$-dimensional linear subspace $F$, consider the orthogonal projection $\pi_F: \mathbb{R}^d\rightarrow F$.
Note that if $F$ is orthogonal to $H$, then $\pi_F(H)$ separates $\pi_F(\mu_1)$ and $\pi_F(\mu_2)$, so there is no center transversal parallel to $H$.
It thus suffices to consider only (oriented) subspaces which point upwards (in the sense that the $x_d$-component in their normal vector is $>0$).
The space of these subspaces is homeomorphic to the upper hemisphere $S^+$ of $S^{d-1}$.

For each such subspace $F$ let $G_i(F)$ denote the centerpoint region of the projected mass $\pi_F(\mu_i)$ and let $C\subseteq S^+$ be the subset of $F$ where $G_1(F)$ and $G_2(F)$ intersect.
From the center transversal theorem, we know that $C$ is not empty.
Also note that $C$ is independent of the choice of $H$.
We claim that $C$ is a convex subset of $S^+$.

Consider two subspaces $F_1$ and $F_2$ with $G_1(F_1)\cap G_2(F_1)\neq\emptyset$ and $G_1(F_2)\cap G_2(F_2)\neq\emptyset$.
The shortest path between $F_1$ and $F_2$ corresponds to a rotation around a $(d-2)$-dimensional axis.
Let $F_3$ be a subspace along this rotation, and assume for the sake of contradiction that $G_1(F_3)\cap G_2(F_3)=\emptyset$.
This means that there is a hyperplane $\ell$ in $F_3$ separating the two centerpoint regions $G_1(F_3)$ and $G_2(F_3)$.
In particular, by the definition of the centerpoint regions, $\ell$ has less than a $\frac{1}{d}$-fraction of $\pi_{F_3}(\mu_1)$ on its positive side, and less than a $\frac{1}{d}$-fraction of $\pi_{F_3}(\mu_2)$ on its negative side, or vice versa.
Assume without loss of generality that $\ell$ goes through the origin.
Consider a point in the support of one of the measures.
During the rotation of $F_3$ the projection of this point moves along a line in the projection.
In fact, all points move along parallel lines, and the points in the support of $\mu_1$ move in the opposite direction of the points in the support of $\mu_2$ and each point crosses a fixed hyperplane through the origin in the projection at most once.
Thus, without loss of generality, the fraction of $\pi_F(\mu_1)$ on the positive side of $\ell$ only decreases during the rotation from $F_1$ to $F_2$, and the same holds for the fraction of $\pi_F(\mu_2)$ on the negative side of $\ell$.
But then, $\ell$ still separates $G_1(F_2)$ and $G_2(F_2)$ in $F_2$, and thus $F_2\notin C$, which is a contradiction.
This shows that $C$ is indeed a convex subset of $S^+$.

Let now $F$ be the center of mass of $C$.
In $F$, we have that $G:=G_1(F)\cap G_2(F)$ is a non-empty convex set.
Let $g$ be the center of mass of $G$.
As all the above maps are continuous, the preimage $\pi_F^{-1}(g)$ is now a canonical center transversal.

\end{proof}

Again, the same statement holds for point sets.
With these tools at hand, we are now ready to prove the second part of the induction.

\begin{lemma}
$E(d-1)\Rightarrow R(d)$.
\end{lemma}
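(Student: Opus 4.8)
The plan is to produce the colorful Radon partition with $a=d$ red groups and $b=2$ blue groups, obtaining the red groups from an enclosing in $\mathbb{R}^{d-1}$ via $E(d-1)$. For the dimension reduction, note first that since $B$ surrounds $R$, a centerpoint $p$ of $R$ --- which by the centerpoint theorem has $\text{TD}(R,p)\geq|R|/(d+1)$ --- is automatically deep in $B$ as well, because every closed halfspace $h$ containing $p$ satisfies $|B\cap h|\geq|R\cap h|\geq|R|/(d+1)$. Fix a generic line $\ell$ through $p$, let $\pi\colon\mathbb{R}^d\to\mathbb{R}^{d-1}$ be the orthogonal projection along $\ell$ (injective on $R\cup B$ for generic $\ell$), and set $q:=\pi(p)=\pi(\ell)$. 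Since the $\pi$-preimage of a halfspace through $q$ is a halfspace through $\ell\ni p$, we get $\text{TD}(\pi(R),q)\geq|R|/(d+1)$, $\text{TD}(\pi(B),q)\geq|R|/(d+1)$, and $\pi(B)$ surrounds $\pi(R)$, all in $\mathbb{R}^{d-1}$. (Using the Center Transversal Theorem on $R,B$ instead improves $|R|/(d+1)$ to $|R|/d$ and is what one should do to optimize the constant.)

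Now apply $E(d-1)$ to $(\pi(R),q)$ to get disjoint $\hat R_1,\dots,\hat R_d\subseteq\pi(R)$, each of size $\geq c_1(d-1)\cdot|R|/(d+1)$, such that every transversal contains $q$ in its convex hull; let $R_1,\dots,R_d\subseteq R$ be their $\pi$-preimages. As enclosing is inherited by subsets, we may first regularize by the Same Type Lemma (Lemma~\ref{lem:selection}), shrinking to positive-fraction subsets on which all red transversals have a single order type. For any transversal $r_1\in R_1,\dots,r_d\in R_d$ the simplex $\text{conv}(r_1,\dots,r_d)$ projects onto a set containing $q$, hence meets $\ell$; generically its affine hull is a graph over $\mathbb{R}^{d-1}$ through $\pi$ (a hyperplane cutting $\ell$ in one point), so it has a well-defined lower and upper side along $\ell$. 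Let $K\subseteq\mathbb{R}^{d-1}$ be the (convex) intersection of the projected simplices $\text{conv}(\pi(r_1),\dots,\pi(r_d))$ over all red transversals; $K$ contains $q$, and after mild care we may take it to be full-dimensional. These $R_1,\dots,R_d$ are our $d$ red groups.

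The heart of the argument is the following observation for the two blue groups: if $b^-$ lies below every red simplex and $b^+$ above every red simplex, and in addition $\pi(b^-),\pi(b^+)\in K$, then $[b^-,b^+]$ meets $\text{conv}(r_1,\dots,r_d)$ for every red transversal. Indeed $b^-$ is below and $b^+$ above the corresponding affine hull, so $[b^-,b^+]$ crosses it in a point $y$ with $\pi(y)\in[\pi(b^-),\pi(b^+)]\subseteq K\subseteq\text{conv}(\pi(r_i))$; since $\pi$ maps that affine hull bijectively onto $\mathbb{R}^{d-1}$, carrying the simplex onto $\text{conv}(\pi(r_i))$, we conclude $y\in\text{conv}(r_1,\dots,r_d)$. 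Thus it suffices to find $B_1,B_2\subseteq B$, each of size $\geq c_2(d)\cdot|R|$, with $B_1$ contained in $\{x:\pi(x)\in K,\ x\text{ below all red simplices}\}$ and $B_2$ in the symmetric upper region: then part (4) of $R(d)$ holds for every transversal with $a=d$, $b=2$, and parts (1)--(3) are immediate. Such $B_1,B_2$ should be extractable from the facts that $q$ is a deep point of $\pi(B)$ and that $\pi(B)$ surrounds $\pi(R)$ --- forcing a positive fraction of $B$ to project into $K$ and, because all the red hyperplanes pass near $q$, to split into a positive fraction on each of the two sides --- after which a last application of Lemma~\ref{lem:selection} to $R_1,\dots,R_d,B_1,B_2$ makes all $(d+2)$-point transversals combinatorially identical.

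I expect the last point to be the main obstacle: ensuring a \emph{constant} fraction of $B$ in each of the two blue regions. The worry is that $K$ could be thin, so that too few blue points project into it; overcoming this needs the surrounds hypothesis used quantitatively --- for instance, via Lemma~\ref{lem:wellseparated} one can aim to take the convex hulls of $\hat R_1,\dots,\hat R_d$ well-separated, which bounds the width of $K$ about $q$ from below, and then a point of Tukey depth $\Omega(|R|)$ lying well inside $\text{conv}(\pi(B))$ has $\Omega(|R|)$ points of $\pi(B)$ on each side of any hyperplane through a fixed small neighbourhood of it. Carrying this out is what pins down the final constant, giving $c_2(d)=\Omega(c_1(d-1))$ up to the Same Type Lemma factor.
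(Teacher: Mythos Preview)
Your approach diverges from the paper's in an essential way, and the divergence is exactly at the point you yourself flag as the main obstacle.

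The gap is real and your proposed fix does not close it. Well-separatedness of $\hat R_1,\dots,\hat R_d$ (Lemma~\ref{lem:wellseparated}) only says that no hyperplane meets all $d$ convex hulls; it gives no quantitative lower bound on the width of the intersection $K$ of the projected simplices. More fundamentally, even if $K$ were fat around $q$, the fact that $\text{TD}(\pi(B),q)\geq |R|/(d+1)$ says nothing about how many points of $\pi(B)$ lie \emph{inside} a fixed bounded region around $q$: Tukey depth controls halfspaces, not balls, and all of $\pi(B)$ could sit far from $q$ while still surrounding it. Your last sentence conflates ``$\Omega(|R|)$ points on each side of any hyperplane near $q$'' with ``$\Omega(|R|)$ points inside $K$'', but $K$ is an intersection of many halfspaces and you would need points on the correct side of all of them simultaneously. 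So there is no mechanism in your outline that forces a constant fraction of $B$ into the two wedges $\pi^{-1}(K)\cap\{\text{below}\}$ and $\pi^{-1}(K)\cap\{\text{above}\}$.

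The paper sidesteps this by symmetrizing the roles of the colors. Instead of applying $E(d-1)$ once to $R$ and then hunting for blue points in a prescribed region, it first slices $B$ by two parallel hyperplanes into outer slabs $A_1,A_2$ of size $|R|/3$ each (with the middle red set $M$ of size $\geq |R|/3$ guaranteed by the surrounding hypothesis), then uses a Borsuk--Ulam type argument (Lemma~\ref{lem:sections} together with the canonical center transversal of Lemma~\ref{lem:unique_center_transversal}) to find a single line $c$ that is simultaneously a center transversal for $A_1$, $A_2$ and $M$. It then applies $E(d-1)$ \emph{three} times, to the projections of $A_1$, $A_2$, $M$ separately, obtaining $3d$ subsets whose transversal simplices each capture a point on $c$ --- one to the left of $h_1$, one to the right of $h_2$, one in between. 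The Same Type Lemma on these $3d$ sets followed by Kirchberger's theorem then extracts the final $d+2$ groups. The key point is that by forcing all three families to hit the \emph{same} line $c$, the red/blue intersection is witnessed along $c$ itself, and no region like your $K$ ever needs to be populated.
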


\begin{proof}
Let $P=R\cup B$ be a bichromatic point set where $B$ surrounds $R$.
Recall that we want to find pairwise disjoint subsets $R_1,\ldots,R_a\subseteq R$ and $B_1,\ldots,B_b\subseteq B$ with $a+b=d+2$, all of which contain at least $c_2\cdot |R|$ points and with the property that for every transversal $r_1\in R_1,\ldots,r_a\in R_a, b_1\in B_1,\ldots, b_b\in B_b$, we have $\text{conv}(r_1,\ldots,r_a)\cap \text{conv}(b_1,\ldots,b_b)\neq\emptyset$.

Let $\ell$ be a line through the origin.
Sweep a hyperplane orthogonal to $\ell$  from one side to the other (without loss of generality from left to right).
Let $h_1$ be a sweep hyperplane with exactly $\frac{|R|}{3}$ blue points to the left, and let $A_1$ be the set of these blue points.
Similarly, let $A_2$ be a set of exactly $\frac{|R|}{3}$ blue points to the right of a sweep hyperplane $h_2$.
Let $c$ be the canonical center transversal of $A_1$ and $A_2$ given by Lemma \ref{lem:unique_center_transversal} and let $g$ be the $(d-1)$-dimensional linear subspace which is orthogonal to $c$.
Note that it follows from the proof of Lemma \ref{lem:unique_center_transversal} that $g$ cannot be orthogonal to the sweep hyperplanes.
We denote the projection of $c$ to $g$ as $c_A$.
Note that $c_A$ is a centerpoint of the projections of $A_1$ and of $A_2$ to $g$.
Now, consider the set $M$ of all red points between $h_1$ and $h_2$ and note that as the blue points surround the red points we have $|M|\geq\frac{|R|}{3}$.
Project $M$ to $g$ and denote by $c_M$ the center of mass of the centerpoint region of the projected point set.
We claim that there exists a choice of a line $\ell$, such that $c_M=c_A$.
As $g$ is not orthogonal to a sweep hyperplane, there is a unique shortest rotation which rotates $g$ to a hyperplane orthogonal to $\ell$.
In particular, these rotations give a homeomorphism from the space of hyperplanes $g$ to the space of hyperplanes orthogonal to the $\ell$'s, which is the space of all $(d-1)$-dimensional linear subspaces.
Further, as rotations and projections are continuous and $c$ is a canonical center transversal, $c_A$ and $c_M$ are continuous assignments of points, thus the above claim follows from Lemma \ref{lem:sections}.

So assume now that $c_M=c_A$.
In particular, $c$ is a center transversal for $A_1$, $A_2$ and $M$.
Project $A_1$ to $g$.
The projection of $c$ is a centerpoint of the projection of $A_1$ in $g$ and $g$ has dimension $d-1$, thus by the statement $E(d-1)$ there are three subsets $A_{1,1}, \ldots A_{1,d}$ of $A_1$, each of size $c_1\cdot |A_1|$ whose projections enclose the projection of $c$.
The analogous arguments gives subsets $A_{2,1}, \ldots, A_{2,d}$ of $A_2$ and $M_1,\ldots,M_d$ of $M$.
Consider now these $3d$ subsets.
By Theorem \ref{lem:selection} there are subsets $A'_{1,1},\ldots,M'_d$, each of size linear in the size of the original subset, such that each transversal of the subsets has the same order type.
Consider such a transversal.
By construction, the $d$ points of $A_1$ contain in their convex hull a point on $c$ which is to the left of $h_1$.
Similarly, the $d$ points of $A_2$ contain in their convex hull a point on $c$ to the right of $h_2$.
Finally, the $d$ points of $M$ contain in their convex hull a point on $c$ between $h_1$ and $h_2$.
Thus, the convex hulls of the blue points (from $A_1$ and $A_2$) and the red points (from $M$) intersect.
In particular, by Kirchberger's theorem \cite{Kirchberger}, there is a subset of $d+2$ red and blue points, which form a Radon partition where the convex hull of the red points intersects the convex hull of the blue points.
Now, choose the subsets from which these points were selected.
As every transversal of these subsets has the same order type, every transversal gives a Radon partition which respects the color classes, thus these subsets satisfy the required properties.
\end{proof}

This proof shows that $c_2(d)\geq\frac{c_3(d,3d)}{3}c_1(d-1)$.
Using the bound on $c_3$ from \cite{semialgebraic} and $c_1(d)\geq c_2(d-1)$, we thus get $c_2(d)=\Omega(\frac{c_2(d-2)}{3\cdot 2^{d^4\log d}})=\ldots=\Omega(\frac{1}{3^{d/2}\cdot 2^{d^5\log d}})$, and as $c_1(d)\geq c_2(d-1)$ we get the same asymptotics for $c_1$.
For comparison, for the constant $c'$ proven by Fabila-Monroy and Huemer, it follows from their proof that $c'(d)\geq\frac{c_3(d,2d)}{2}c'(d-1)$, which gives 
$c'(d)\in\Omega(\frac{1}{2^{d}\cdot 2^{d^5\log d}})$.

Combining this with the results from Section \ref{sec:axiom2}, we get that any central depth measure is an approximation of Tukey depth.
In fact, by Lemma \ref{lem:alpha} this even holds for many $\alpha$-central depth measures.

\begin{corollary}
Let $\varrho$ be an $\alpha$-central depth measure on $\mathbb{R}^d$ where $\alpha>\frac{1}{d+2}$.
Then there exists a constant $c=c(d)$ such that for every point set $S$ and query point $q$ in $\mathbb{R}^d$ we have
$$\text{TD}(S,q)\geq\varrho(S,q)\geq c\cdot \text{TD}(S,q).$$
\end{corollary}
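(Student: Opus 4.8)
The plan is to read the statement off from the results already established. Since an $\alpha$-central depth measure satisfies conditions (i) and (ii), Observation \ref{obs:upper_bound} immediately gives the upper bound $\varrho(S,q)\leq\text{TD}(S,q)$. For the lower bound, the hypothesis $\alpha>\frac{1}{d+2}$ makes $d+2-\frac1\alpha$ strictly positive, so Lemma \ref{lem:alpha} applies (for $q$ in general position relative to $S$), and combining it with Theorem $E(d)$ one gets
$$\varrho(S,q)\ \geq\ \left(d+2-\tfrac{1}{\alpha}\right)\text{ED}(S,q)-(d+1)\ \geq\ \beta\cdot\text{TD}(S,q)-(d+1),\quad\text{where }\beta:=\left(d+2-\tfrac{1}{\alpha}\right)c_1(d)>0.$$
So the claim holds up to the additive constant $d+1$ and the general-position caveat, and the remaining work is to dispose of these two points.

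To absorb the additive constant I would split on the value of $k:=\text{TD}(S,q)$. If $k=0$ then $q\notin\text{conv}(S)$ and both sides of the asserted inequality vanish by locality. Set $k_0:=\lceil 2(d+1)/\beta\rceil$; if $k\geq k_0$ then $\beta k-(d+1)\geq\frac{\beta}{2}k$, so the displayed chain already yields $\varrho(S,q)\geq\frac{\beta}{2}\text{TD}(S,q)$. In the remaining range $1\leq k<k_0$ the point $q$ lies in $\text{conv}(S)$, and here the enclosing-depth bound is useless — it may even be negative — so one falls back on centrality instead: as noted in Section \ref{sec:axiom2}, for an (integral) $\alpha$-central measure conditions (iii$'$) and (iv$'$), together with combinatoriality, force $\varrho(S,q)\geq 1$ for every $q\in\text{conv}(S)$ (centrality plus integrality give a point of depth $1$ in the relative interior of any Carathéodory simplex spanned by points of $S$, and monotonicity pushes this to all of $\text{conv}(S)$). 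Hence $\varrho(S,q)\geq 1>\frac{1}{k_0}\text{TD}(S,q)$ in this range. Taking $c:=c(d):=\min\{\frac{\beta}{2},\frac{1}{k_0}\}$ then covers all cases.

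The general-position hypothesis on $q$ is used only in Lemma \ref{lem:alpha} (via Lemmas \ref{lem:wellseparated}--\ref{lem:cones}); Observation \ref{obs:upper_bound}, Theorem $E(d)$, and the fact that $\varrho\geq 1$ on $\text{conv}(S)$ all hold for arbitrary $q$, so one only needs to reduce a general query point to the general-position case. I would do this by a perturbation argument — but stated carefully, since a combinatorial depth measure need not vary continuously across the walls of the hyperplane arrangement, so one cannot just move $q$ and quote continuity; one either adapts the proof of Lemma \ref{lem:alpha} to an enclosing witness chosen so that $q$ is in general position relative to it, or proves the bound first on all cells and then derives the wall values by a limiting argument. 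I expect this to be routine but mildly technical, and I do not think there is any substantial obstacle: the real content of the corollary lies entirely in Lemma \ref{lem:alpha} and Theorem $E(d)$, which are proven earlier, and the only genuinely new point is the case analysis above, whose crux is the observation that in the bounded–Tukey-depth regime the enclosing bound degenerates and must be replaced by the centrality bound.
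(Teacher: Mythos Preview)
Your approach is correct and matches the paper's: the corollary is stated in the paper with no separate proof, only the preceding sentence ``Combining this with the results from Section~\ref{sec:axiom2} \ldots\ by Lemma~\ref{lem:alpha}'', so the intended argument is exactly the chain Observation~\ref{obs:upper_bound} $+$ Lemma~\ref{lem:alpha} $+$ Theorem~$E(d)$ that you wrote down.

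Your treatment is in fact more careful than the paper's. The paper does not explicitly remove the additive $-(d+1)$ term or the general-position hypothesis on $q$; it simply passes from Theorem~\ref{thm:central} (which has both caveats) to the cleaner corollary without comment. Your case split on the size of $\text{TD}(S,q)$ and the perturbation remark are reasonable ways to close these gaps. Two small caveats worth noting: your handling of the small-$k$ range invokes integrality to get $\varrho\geq 1$ on $\text{conv}(S)$, which is not part of the definition of $\alpha$-central (without integrality one only gets $\varrho\geq\alpha(d+1)$ at the deep point of a simplex, and one then has to argue via combinatoriality that this transfers to the whole interior); and the constant you produce, like the one implicit in the paper, actually depends on $\alpha$ as well as $d$ (since $\beta\to 0$ as $\alpha\to\frac{1}{d+2}^{+}$), so the notation $c=c(d)$ in the corollary is slightly loose in both your write-up and the paper.
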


\section{Conclusion}

We have introduced two families of depth measures, called super-additive depth measures and central depth measures, where the first is a strict subset of the second.
We have shown that all these depth measures are a constant-factor approximation of Tukey depth.

It is known that Tukey depth is coNP-hard to compute when both $|S|$ and $d$ is part of the input \cite{Tukey_hard}, and it is even hard to approximate \cite{Tukey_apx} (see also \cite{Wagner}).
Our result is thus an indication that central depth measures are hard to compute.
However, this does not follow directly, as our constant has a doubly exponential dependence on $d$.
It is an interesting open problem whether the approximation factor can be improved.

Further, we have introduced a new depth measure called enclosing depth, which is neither super-additive nor central, but still is a constant-factor approximation of Tukey depth.
As it turns out, this depth measure is intimately related to a constant fraction Radon theorem on bi-colored point sets.
Finally, we have shown that any super-additive depth measure whose depth regions are convex is cascading.

This last result is motivated by Kalai's cascade conjecture, which, in the terminology of this paper, states that Tverberg depth is cascading.
While this conjecture remains open, we hope that our results might be useful for an eventual proof.

There is a depth measure which has attracted a lot of research, which does not fit into our framework: simplicial depth ($\text{SD}$).
The reason for this is that while the depths studied in this paper are linear in the size of the point set, simplicial depth has values of size $O(|S|^{d+1})$.
However, after the right normalization, simplicial depth can be reformulated to satisfy all conditions except super-additivity and centrality.
It would be interesting to see whether there is some function $g$ depending on point sets and query points such that the depth measure $\frac{\text{SD}(S,q)}{g(S,q)}$ is super-additive.
Such a function, if it exists, could potentially be used to improve bounds for the first selection lemma (see e.g.~\cite{MatousekDiscreteGeometry}).

\bibliographystyle{plainurl} %
\bibliography{refs}

\end{document}